\newtheorem{theo}{Theorem}[section]
\newtheorem{prop}[theo]{Proposition}
\newtheorem{lemm}[theo]{Lemma}
\newtheorem{coro}[theo]{Corollary}
\newtheorem{rema}[theo]{Remark}
\newtheorem{Defi}[theo]{Definition}
\newtheorem{question}[theo]{Question}
\title{Cycle classes on abelian varieties and the geometry of the Abel-Jacobi map}
\author{Claire Voisin\footnote{The author is supported by the ERC Synergy Grant HyperK (Grant agreement No. 854361).}}
\date{}
\newfont{\gothic}{eufb10}
\begin{document}

\maketitle

\begin{center} {\it Pour  Enrico, avec amiti\'{e}}
\end{center}
\begin{abstract} We discuss two properties of an abelian variety, namely, being a direct summand in a product of Jacobians and the weaker property of being ``split''. We relate the first property to the integral Hodge conjecture for curve classes on abelian varieties. We also relate  both properties to the existence problem for universal zero-cycles on Brauer-Severi varieties over abelian varieties. A similar relation is established for the existence problem of a universal codimension 2 cycle on a cubic threefold.
 \end{abstract}
\section{Introduction}
The purpose of this paper is to explore some geometric  questions related to the integral Hodge conjecture for abelian varieties. This subject  has been revisited recently by Beckmann and de Gaay Fortman   in \cite{degaybeck} who proved the following result (already known by \cite{grabo}  in dimension 3).
\begin{theo}\label{BdGF} Let $A$ be a principally polarized  abelian variety whose minimal class $\gamma_{\rm min}\in H_2(A,\mathbb{Z})$ is algebraic.
Then degree 2  integral Hodge homology classes (or ``curve classes'') of  $A$ are algebraic.
\end{theo}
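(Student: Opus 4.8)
The plan is to separate the \emph{rational} Hodge conjecture for curve classes, which is elementary and holds unconditionally, from the genuine \emph{integral} content, which is the control of denominators, and then to show that the algebraicity of $\gamma_{\rm min}$ supplies exactly the divided classes that are needed.

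First I would record the rational statement. Writing $g=\dim A$ and $\theta\in\mathrm{NS}(A)=\mathrm{Hdg}^2(A,\mathbb Z)$ for the polarization, the Hard Lefschetz map $\theta^{g-2}\cup(-)\colon H^2(A,\mathbb Q)\xrightarrow{\ \sim\ }H^{2g-2}(A,\mathbb Q)$ is an isomorphism of rational Hodge structures. Since $\mathrm{Hdg}^2(A,\mathbb Q)=\mathrm{NS}(A)_{\mathbb Q}$ by Lefschetz $(1,1)$ and every divisor class is algebraic, every rational Hodge class in $H^{2g-2}(A,\mathbb Q)$ has the form $\theta^{g-2}\cup D$ with $D$ a divisor, hence is algebraic. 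Consequently the quotient $T:=\mathrm{Hdg}_2(A,\mathbb Z)/\langle\text{classes of algebraic }1\text{-cycles}\rangle$ is a \emph{finite} group, and the theorem is equivalent to the vanishing $T=0$.

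Next I would locate the denominator. Intersecting divisors gives the algebraic class $\theta^{g-1}=\theta^{g-2}\cup\theta$, and the defining property of the minimal class is the identity
\[
\theta^{g-1}=(g-1)!\,\gamma_{\rm min},
\]
with $\gamma_{\rm min}$ \emph{primitive} in $H_2(A,\mathbb Z)$. Thus the subgroup of manifestly algebraic classes $\theta^{g-2}\cup\mathrm{NS}(A)$ detects $\gamma_{\rm min}$ only after multiplication by $(g-1)!$, and the hypothesis that $\gamma_{\rm min}$ itself is algebraic is precisely what removes this denominator. When $A$ has Picard number one, $\mathrm{Hdg}_2(A,\mathbb Z)=\mathbb Z\,\gamma_{\rm min}$, so the hypothesis yields the conclusion at once; this already isolates the mechanism.

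The remaining work, and the main obstacle, is the general case, where extra divisor classes produce integral Hodge curve classes beyond the multiples of $\gamma_{\rm min}$. Here I would use the decomposition of $A$ up to isogeny into powers of simple abelian varieties together with the identification $\mathrm{NS}(A)_{\mathbb Q}\cong\mathrm{End}^{s}(A)_{\mathbb Q}$ afforded by the principal polarization, in order to reduce an arbitrary integral Hodge class $\theta^{g-2}\cup x$ to a combination of minimal classes of the abelian subvarieties $B\subseteq A$ cut out by the symmetric idempotents, modulo the already-algebraic classes in $\theta^{g-2}\cup\mathrm{NS}(A)$. One is then reduced to the algebraicity of each $\gamma_{\rm min}(B)$, which I would deduce from that of $\gamma_{\rm min}(A)$ by intersecting a chosen algebraic representative of $\gamma_{\rm min}(A)$ with the sub-polarized abelian varieties and comparing minimal classes under the inclusions $B\hookrightarrow A$. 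The delicate point throughout is \emph{integrality}: the idempotents, isogenies and norm maps involved are defined only rationally and a priori introduce denominators, so the argument must exploit the unimodularity of the principal polarization and the primitivity of $\gamma_{\rm min}$ to guarantee that the precise integral classes -- not merely integer multiples of them -- are realized by algebraic cycles. Controlling these denominators is exactly the step that upgrades the trivial rational statement to the integral theorem, and is where the hypothesis must be used in full force.
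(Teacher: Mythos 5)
This theorem is not proved in the paper: it is quoted from Beckmann--de Gaay Fortman \cite{degaybeck}, so the only comparison available is with their argument, whose mechanism is visible in the paper's own Proposition \ref{theoequiv} and Lemma \ref{lecomputcoh}. Your preliminary reductions are correct and worth keeping: hard Lefschetz plus Lefschetz $(1,1)$ does show that $\mathrm{Hdg}^{2g-2}(A,\mathbb{Q})=\theta^{g-2}\cup\mathrm{NS}(A)_{\mathbb{Q}}$ is algebraic, so the obstruction group is finite, and since $\gamma_{\rm min}$ is primitive the case $\rho(A)=1$ is indeed immediate from the hypothesis. But everything after that --- which is the actual content of the theorem --- is a sketch with genuine gaps. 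First, you assert without justification that $\mathrm{Hdg}_2(A,\mathbb{Z})$ is generated, modulo $\theta^{g-2}\cup\mathrm{NS}(A)$, by minimal classes of abelian subvarieties $B\subseteq A$ cut out by symmetric idempotents; these idempotents live only in $\mathrm{End}(A)\otimes\mathbb{Q}$, the induced polarizations on the $B$'s need not be principal (so their ``minimal classes'' need not even be integral), and no mechanism is offered for clearing the resulting denominators. Second, the step ``deduce the algebraicity of $\gamma_{\rm min}(B)$ from that of $\gamma_{\rm min}(A)$ by intersecting an algebraic representative with $B$'' does not make sense as stated: a $1$-cycle intersected with a proper abelian subvariety is generically empty (cohomologically the product $\gamma_{\rm min}(A)\cdot[B]$ lives in degree $>2g$), and the operation you actually need --- a projection or norm map --- is again only defined up to isogeny, reintroducing exactly the denominators you acknowledge you must control. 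So the proposal correctly isolates \emph{where} the difficulty lies but does not resolve it.

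For contrast, the known proof runs in an entirely different direction, and its first half is reproduced in this paper. From $\gamma_{\rm min}=\sum_i\epsilon_i j_{i*}[C_i]$ one forms $\hat{j}:A\cong\widehat{A}\to\prod_iJ(C_i)$; Lemma \ref{lecomputcoh} shows that the theta pairing of $\prod_iJ(C_i)$ restricts on $\hat{j}_*H_1(A,\mathbb{Z})$ to $\pm$ the unimodular pairing of the principal polarization, whence $A$ is a direct summand, as an abelian variety, of a product of Jacobians (Proposition \ref{theoequiv}). The integral Hodge conjecture for curve classes then descends from the product of Jacobians to the summand $A$ as in Remark \ref{rema1}; the remaining input, that products of Jacobians satisfy the integral Hodge conjecture for curve classes, is established in \cite{degaybeck} via the integral Fourier transform (Poincar\'e's formula makes the divided powers $\theta^{g-k}/(g-k)!$ algebraic on a Jacobian, and the Fourier transform then carries integral divisor classes on the dual to integral curve classes). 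That geometric splitting, or some substitute for the Fourier-transform machinery, is precisely what is missing from your argument.
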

Here the minimal class is the  integral degree $2$ Hodge homology class  on $A$ defined as follows. Let $g={\rm dim}\,A$ and $\theta\in H^2(A,\mathbb{Z})$ be the class of the principal polarization. Then $\gamma_{\rm min}:=\frac{\theta^{g-1}}{(g-1)!}\in H^{2g-2}(A,\mathbb{Z})\cong H_{2}(A,\mathbb{Z})$.
\begin{rema}{\rm The result proved by Beckman and de Gaay Fortman is in fact more general. In particular, the polarization can be replaced with any line bundle, with no positivity assumption, whose class $c_1(L)\in H^2(A,\mathbb{Z})$ is unimodular.}
\end{rema}
\begin{rema} {\rm As is well-known, the minimal class of the Jacobian of a curve (equipped with its natural polarization) is algebraic, since it is the class of the curve embedded in its Jacobian. Thus Theorem  \ref{BdGF} applies to Jacobians of curves, and also products of Jacobians.}
\end{rema}
We first establish in this paper  the following  complement to Theorem \ref{BdGF}.
\begin{theo} \label{theomain} (Cf. Proposition \ref{theoequiv} and  Corollary \ref{coromain}.) An abelian variety $A$  is a direct summand, as an abelian variety, in a product of Jacobians, if and only if $A\times \widehat{A}$ satisfies the integral Hodge conjecture for curve classes.
\end{theo}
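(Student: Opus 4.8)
The plan is to prove the two implications separately, the main tools being Theorem \ref{BdGF} and the Poincaré bundle on $A\times\widehat A$. I abbreviate by IHC$_1$ the integral Hodge conjecture for curve classes. First I would record two stability properties. (i) A product $P=\prod_i J(C_i)$ of Jacobians satisfies IHC$_1$: with the product principal polarization $\theta=\sum_i p_i^*\theta_i$, expanding $\theta^{g-1}/(g-1)!$ and using $\theta_i^{g_i}/g_i!=[\mathrm{pt}]$ shows that the minimal class of $P$ is $\sum_i\gamma_{\mathrm{min},i}$, where $\gamma_{\mathrm{min},i}$ is the class of the Abel--Jacobi curve $C_i\subset J(C_i)$ over a point of the remaining factors; this is algebraic, so Theorem \ref{BdGF} applies to $P$. (ii) IHC$_1$ descends to direct summands: if $r\circ i=\mathrm{id}_A$ for homomorphisms $i\colon A\to P$ and $r\colon P\to A$, and $\alpha$ is a Hodge curve class on $A$, then $i_*\alpha$ is Hodge, hence algebraic when $P$ satisfies IHC$_1$, and $\alpha=r_*i_*\alpha$ is algebraic because $r_*$ carries $1$-cycles to $1$-cycles.

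\textbf{Forward direction.} Assuming $A$ is a direct summand of $\prod_i J(C_i)$, the self-duality $\widehat{J(C_i)}\cong J(C_i)$ exhibits $\widehat A$ as a direct summand of $\prod_i\widehat{J(C_i)}\cong\prod_i J(C_i)$; hence $A\times\widehat A$ is a direct summand of a product of Jacobians. Combining (i) and (ii) then yields IHC$_1$ for $A\times\widehat A$. (Note this argument in fact gives IHC$_1$ for $A$ itself; the dual factor becomes essential only in the converse.)

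\textbf{Reverse direction.} For the converse the key object is the Poincaré bundle $\mathcal P$ on $A\times\widehat A$. Its class $c_1(\mathcal P)\in H^1(A)\otimes H^1(\widehat A)$ is the identity of $\mathrm{End}(H^1(A))$ under $H^1(\widehat A)\cong H^1(A)^\vee$. A symplectic-basis computation (writing $c_1(\mathcal P)=\sum_{j=1}^{2g}e_j\otimes f_j$ and using that squares of degree-one classes vanish) shows that $\gamma:=c_1(\mathcal P)^{2g-1}/(2g-1)!$ is \emph{integral}, lies purely in $H^{2g-1}(A)\otimes H^{2g-1}(\widehat A)\cong H_1(A)\otimes H_1(\widehat A)\subset H_2(A\times\widehat A,\mathbb Z)$, and represents the identity of $\mathrm{End}(H_1(A))$. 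Being an integral Hodge class, $\gamma$ is, by hypothesis, algebraic: $\gamma=\sum_i n_i[C_i]$ for curves $C_i\to A\times\widehat A$. The universal property of the Jacobian turns each $C_i$ into homomorphisms $J(\widetilde C_i)\to A$ and $J(\widetilde C_i)\to\widehat A$, and the $H_1(A)\otimes H_1(\widehat A)$-component of $[C_i]$ is the image, under the two induced maps on $H_1$, of the principal class $\mathrm{id}_{H_1(\widetilde C_i)}\in H_1(\widetilde C_i)\otimes H_1(\widetilde C_i)$. Reading off the cross-term of $\gamma=\sum_i n_i[C_i]$ therefore expresses $\mathrm{id}_{H_1(A)}$ as factoring through $\bigoplus_i H_1(J(\widetilde C_i))$; since $\mathrm{Hom}$ of abelian varieties equals $\mathrm{Hom}$ of the integral weight $-1$ Hodge structures $H_1$, this factorization is realized by honest homomorphisms composing to $\mathrm{id}_A$, so $A$ is a direct summand of $\prod_i J(\widetilde C_i)$.

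\textbf{Main obstacle.} I expect the delicate point to be the final step of the reverse direction: upgrading the cohomological identity $\gamma=\sum_i n_i[C_i]$ to a genuine, integral (not merely isogeny-level) splitting of $\mathrm{id}_A$. This rests on two computations that must be done with care: verifying that $c_1(\mathcal P)^{2g-1}/(2g-1)!$ is integral and is exactly the identity correspondence of $H_1(A)$, and identifying the cross-term of a curve's fundamental class with the principal polarization class of its Jacobian, so that the coefficients $n_i$ together with the self-duality identifications assemble into homomorphisms whose composite is $\mathrm{id}_A$ on the nose.
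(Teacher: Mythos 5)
Your proof is correct. The forward direction coincides with the paper's (Remarks \ref{rema1} and \ref{rema2}: a direct summand of a product of Jacobians has its dual, hence $A\times\widehat{A}$, again a direct summand of such a product, and the integral Hodge conjecture for curve classes passes from a product of Jacobians, where it holds by Theorem \ref{BdGF}, to direct summands). Your converse, however, takes a genuinely different route. The paper proves the sharper Proposition \ref{theoequiv} for an arbitrary unimodular line bundle $L$: it writes $\gamma_{\rm min}=\sum_i\epsilon_i j_{i*}[C_i]$ with $\epsilon_i=\pm1$, embeds $A\cong\widehat{A}$ into $\prod_i J(C_i)$ via the pullback maps $j_i^*$, shows (Lemma \ref{lecomputcoh}) that the signed theta form $\Theta_{\epsilon_\cdot}$ restricts on the image to $\pm\langle\,,\,\rangle_L$, and then obtains the splitting from the unimodularity of the restricted form together with the Hodge--Riemann relations, which make the orthogonal complement a sub-Hodge structure. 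You instead extract the $H_1\otimes H_1$ K\"{u}nneth component of $\gamma=\sum_i n_i[C_i]$, identify it with ${\rm id}_{H_1(A)}$, note that each curve contributes an integral composite $H_1(A)\rightarrow H_1(J(\widetilde{C}_i))\rightarrow H_1(A)$ (transpose of the second projection, principal polarization of $J(\widetilde{C}_i)$, pushforward by the first projection), and conclude by factoring the identity through $\bigoplus_i H_1(J(\widetilde{C}_i))$; the splitting then comes from the resulting idempotent rather than from an orthogonal complement. The underlying computation is the same --- your claim that $c_1(\mathcal{P})^{2g-1}/(2g-1)!$ is the identity correspondence is exactly Lemma \ref{lecomputcoh} specialized to the Poincar\'{e} bundle, and both of the points you flag as delicate do check out (integrality because the summands of $c_1(\mathcal{P})=\sum_j e_j\otimes f_j$ commute and square to zero; the cross-term of a curve class is the pushforward of the $(1,1)$-component of its diagonal) --- but your packaging buys something: no normalization of the coefficients to $\pm1$, no auxiliary divisor $\Theta_{\epsilon_\cdot}$, no appeal to Hodge--Riemann, and $A$ itself is split off $\prod_i J(\widetilde{C}_i)$ in one step rather than via first splitting $A\times\widehat{A}$. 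What it costs is generality as written: to recover Proposition \ref{theoequiv} for an arbitrary unimodular $L$ on $A$ you would factor $\iota_L$ instead of the identity and compose with its inverse, which is integral precisely by unimodularity.
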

 It follows  that the integral Hodge conjecture for   curve classes on abelian varieties is equivalent to the statement that any abelian variety is a direct summand, as an abelian variety, in a product of Jacobians.
The ``only if'' implication in  this theorem is an immediate consequence of  \cite{degaybeck} (see  Remarks \ref{rema1}, \ref{rema2}).
Note that Proposition \ref{theoequiv} proves a  slightly more precise  statement,  concerning a  principally polarized abelian variety, or more generally an abelian variety which admits a unimodular line bundle.

We will discuss  in this paper  a notion which is weaker than being a direct summand in a product of Jacobians, namely that of a ``split'' abelian variety (see Definition \ref{fisplit}). We first explain our geometric motivation which comes from  the study of  the stable rationality problem for  rationally connected threefolds $X$. It is classically known since the seminal work of Clemens and Griffiths \cite{CG} that the rationality problem  for  rationally connected threefolds $X$ can be solved  by  studying  the intermediate Jacobian  $J:=J^3(X)$ of $X$,  which is a principally polarized  abelian variety.  As discovered in \cite{voisinJAG}, \cite{voisinjems}, the  algebraicity of certain  integral Hodge classes on $J$,  and  integral Hodge classes on the product $J\times X$,   is related to the {\it stable}  rationality of $X$, via
 the geometry of the  Abel-Jacobi map for cycles algebraically equivalent to $0$ on $X$, and we explore  further these phenomena in this paper.  As is well-known, for any smooth projective variety $X$, the group
${\rm Pic}^0(X)={\rm CH}^1(X)_{\rm hom}$ is isomorphic via the Abel-Jacobi  map $\Phi_X^1$ to the intermediate Jacobian $J^1(X)$, and furthermore, there exists a universal divisor
$$\mathcal{P}\in {\rm Pic}(J^1(X)\times X)={\rm CH}^1(J^1(X)\times X)$$ realizing geometrically this isomorphism as $t\mapsto \Phi_X^1(\mathcal{P}_t-\mathcal{P}_0)\in J^1(X)$.
If we now consider  a smooth projective variety $X$ with ${\rm CH}_0(X)=\mathbb{Z}$ and denote  $J:=J^3(X)$  its intermediate Jacobian,   the Abel-Jacobi map for codimension $2$ cycles of $X$  is an isomorphism $\Phi_X: {\rm CH}^2(X)_{\rm alg}\rightarrow J^3(X)$ by \cite{blochsrinivas},  and a universal codimension $2$ cycle is defined in \cite{voisinJAG} to be a cycle $\Gamma\in{\rm CH}^2(J\times X)$, such that the   associated Abel-Jacobi map
$$\Phi_\Gamma: {\rm Alb}(J)=J\rightarrow J$$
is the identity of $J$. Here $\Phi_\Gamma$ (that we will also denote by $\Gamma_*$ in the sequel) is induced by the morphism
$$t\mapsto  \Phi_X(\Gamma_t-\Gamma_0)\in J$$
of  algebraic varieties. An equivalent condition is the fact that
$$[\Gamma]_*: H_1(J,\mathbb{Z})\rightarrow H^3(X,\mathbb{Z})/{\rm torsion}=:H^3(X,\mathbb{Z})_{\rm tf}$$ is the natural isomorphism (recall that, as a complex torus
$$J^3(X)=H^3(X,\mathbb{C})/(F^2H^3(X)\oplus H^3(X,\mathbb{Z})_{\rm tf}),$$
which defines  this natural  isomorphism).
As by definition, $[\Gamma]_*$ is an isomorphism of Hodge structures, it provides a Hodge class in $H^4(J\times X,\mathbb{Z})$ (see \cite[Section 2.2.2]{voisincitrouille}). The existence of a universal codimension $2$ cycle on $J\times X$ is thus a particular instance of the integral Hodge conjecture for degree $4$ Hodge classes on $J\times X$.
As
discovered in \cite{voisininvent},    there are examples  of rationally connected threefolds $X$ which  do not have  a universal codimension $2$ cycle. As observed in \cite{voisinJAG}, the existence of a universal codimension $2$ cycle for a rationally connected smooth projective variety  $X$ is a necessary condition for the existence of a (cohomological) decomposition of the diagonal of $X$, hence for its stable rationality.

 It was however proved in \cite{voisincollino} that  there always exists, for such an $X$ of  dimension $3$, a smooth projective variety $M$ of dimension $d$ (that one can in fact  take to be a surface) with a codimension $2$ cycle $Z\in{\rm CH}^2(M\times X)$ inducing an isomorphism
$$\Phi_Z:{\rm Alb}(M)\cong J^3(X).$$
It thus follows that  the non-existence of a universal codimension $2$ cycle for $X$ implies the non-existence of a universal $0$-cycle for $M$, that is, there does not exist a codimension $d$ cycle
$\Gamma\in {\rm CH}^d(A\times M)$, $A={\rm Alb}(M)$, $d={\rm dim}\,M$, inducing the identity
$$\Gamma_*: {\rm Alb}(A)=A\rightarrow {\rm Alb}(M)=A.$$
For the same reason as above, these examples provide counterexamples to the integral Hodge conjecture on ${\rm Alb}(M)\times M$.
Note that Colliot-Th\'{e}l\`{e}ne \cite{CT} constructed related examples for  varieties  $X$ defined over a non-algebraically closed field.

A natural problem is to try to understand which smooth projective varieties admit a universal $0$-cycle in the above sense.
Trivially, any abelian variety admits a universal $0$-cycle, given by the diagonal. As mentioned above, a curve admits a universal  $0$-cycle, namely its universal divisor.
\begin{question} \label{qintro} (i) (Colliot-Th\'{e}l\`{e}ne \cite{CT}) Let $\psi:P\rightarrow A$ be a  Brauer-Severi variety over an abelian variety. Does $P$ admit a universal $0$-cycle?

(ii) More generally, let  $\psi: P\rightarrow A$ be a fibration into rationally connected varieties. Does $P$ admit a universal $0$-cycle?
\end{question}

\begin{rema}\label{remabrauer} {\rm In both cases, one has ${\rm Alb}(P)\stackrel{\psi_*}{=}{\rm Alb}(A)\cong A$. In  case (i), one might believe that the existence of a universal $0$-cycle $\Gamma\in{\rm CH}^d(A\times P)$ inducing the identity $\Gamma_*:{\rm Alb}(A)=A\rightarrow {\rm Alb}(P)=A$ forces the Brauer class to be $0$, but this is not the case, since the condition on $\Gamma$ concerns only its action on homology of degree $1$ of $A$ and $P$, namely,  $ \psi_*\circ  \Gamma_* $ must be the identity of  $H_1(A,\mathbb{Z})$. It does not say that the Brauer class of $P$ vanishes because the action of $ \psi_*\circ  \Gamma_* $  on the higher degree homology groups of $A$  (especially the group $H_{2g}(A,\mathbb{Z})$ which controls the index of the fibration $\psi$, hence  the Brauer class) is  not specified.}
\end{rema}
Question \ref{qintro} can be considered as a particular case of the integral Hodge conjecture for  Brauer-Severi varieties over abelian varieties. Indeed, as mentioned above, our problem can be formulated as follows:
\begin{question} \label{questionunivreform} Let $\psi:P\rightarrow A$ be a  Brauer-Severi variety over an abelian variety. Does there exist a cycle $\Gamma\in {\rm CH}^d(A\times P)$, $d={\rm dim}\,P$, such that $[\Gamma]_*:H_1(A,\mathbb{Z})\rightarrow H_1(P,\mathbb{Z})$ is the inverse of $\psi_*$?
\end{question}
The cycle $\Gamma$ is a cycle on the  Brauer-Severi variety $A\times P$ over $A\times A$ and the last condition is a restriction on the K\"{u}nneth component of type $(1,2d-1)$ of $[\Gamma]$.
Note that the integral Hodge conjecture for  Brauer-Severi varieties $P\rightarrow A$ over abelian varieties satisfying the Hodge conjecture  has a negative answer in general by \cite{hotchkiss}. We are going to relate here  the problem above to the integral Hodge conjecture on $A\times A$. To start with,
an easy  general result on Question \ref{qintro}(ii) is the following
\begin{prop} \label{leGHS} Let $A$ be an abelian variety and let $P\rightarrow A$ be a fibration with rationally connected general fiber. Assume that $A$ is  a direct summand in a product of Jacobians. Then $P$ admits a universal $0$-cycle.
\end{prop}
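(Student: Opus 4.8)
The plan is to decompose the identity endomorphism of $A$ as a sum of endomorphisms that factor through the Jacobians $J(C_k)$, and then to realize each summand by a correspondence assembled from a Graber--Harris--Starr section of $\psi$ over a curve, the universal divisor of that curve, and the given splitting homomorphisms. The Graber--Harris--Starr theorem is the geometric input that replaces the (possibly nonexistent) global section of $\psi$ over $A$.

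First I would unwind the hypothesis. Writing $A$ as a direct summand of $J=\prod_k J(C_k)$ means there are homomorphisms $i=(i_k)\colon A\to \prod_k J(C_k)$ and $p\colon \prod_k J(C_k)\to A$ with $p\circ i=\mathrm{id}_A$. Since $\mathrm{Hom}(\prod_k J(C_k),A)=\prod_k\mathrm{Hom}(J(C_k),A)$, we may write $p=\sum_k p_k\circ\mathrm{pr}_k$ with $p_k\colon J(C_k)\to A$, and the relation $p\circ i=\mathrm{id}_A$ becomes
$$\sum_k p_k\circ i_k=\mathrm{id}_A,\qquad i_k\colon A\to J(C_k),\ p_k\colon J(C_k)\to A.$$
For each $k$ I would choose an Abel--Jacobi embedding $\mathrm{alb}_k\colon C_k\to J(C_k)$ and set $f_k:=p_k\circ\mathrm{alb}_k\colon C_k\to A$, a morphism from a smooth projective curve whose induced map on Albanese varieties is $p_k$. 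Pulling back $\psi\colon P\to A$ along $f_k$ gives $P_k:=P\times_A C_k\to C_k$, still with rationally connected general fibre provided $f_k(C_k)$ meets the locus over which $\psi$ has rationally connected fibres. The Graber--Harris--Starr theorem then yields a section, i.e.\ a morphism $\sigma_k\colon C_k\to P$ with $\psi\circ\sigma_k=f_k$, so that on Albanese varieties $\psi_*\circ\sigma_{k*}=f_{k*}=p_k$, whence $\sigma_{k*}=p_k$ under the identification $\mathrm{Alb}(P)=A$ furnished by $\psi_*$.

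Next I would assemble the cycle. Let $\mathcal{P}_k\in\mathrm{CH}^1(J(C_k)\times C_k)$ be the universal divisor of $C_k$, whose action on Albanese varieties is the identity of $J(C_k)$. Composing the correspondences $A\xrightarrow{\,i_k\,}J(C_k)\xrightarrow{\,\mathcal{P}_k\,}C_k\xrightarrow{\,\sigma_k\,}P$ produces $\Gamma_k\in\mathrm{CH}^d(A\times P)$, where $d=\dim P$ (a dimension count on iterated correspondence composition places the composite in codimension $d$). By functoriality of the Albanese action under composition of correspondences,
$$(\Gamma_k)_*=\sigma_{k*}\circ(\mathcal{P}_k)_*\circ i_{k*}=p_k\circ\mathrm{id}\circ i_k=p_k i_k.$$
Summing, $\Gamma:=\sum_k\Gamma_k\in\mathrm{CH}^d(A\times P)$ satisfies $\Gamma_*=\sum_k p_k i_k=\mathrm{id}_A$, which is exactly the assertion that $\Gamma$ is a universal $0$-cycle for $P$.

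I expect the main obstacle to be conceptual rather than computational: the whole argument hinges on the Graber--Harris--Starr step, which is what allows each curve $f_k\colon C_k\to A$ to be lifted to a genuine section $\sigma_k\colon C_k\to P$ and thereby converts the algebraic splitting $\sum_k p_k i_k=\mathrm{id}_A$ into a cycle-level identity. The remaining points needing care are (a) arranging, by a general choice of the Abel--Jacobi embeddings or base points, that the curves $f_k(C_k)$ avoid the non--rationally-connected locus of $\psi$ (terms with $p_k=0$ may simply be dropped, as they contribute $0$ on both sides), and (b) the bookkeeping verifying that the composite correspondence lies in codimension $d$ and that $\Gamma\mapsto\Gamma_*$ is functorial, so that the splitting relation transfers verbatim to $\Gamma_*=\mathrm{id}_A$.
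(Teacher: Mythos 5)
Your proposal is correct and follows essentially the same strategy as the paper: apply Graber--Harris--Starr over curves generating the Jacobian factors, spread the resulting sections out over the Jacobians, and transport the result to $A$ via the splitting $\sum_k p_k\circ i_k=\mathrm{id}_A$. The only (harmless) difference is in the spreading-out step, where you compose with the universal divisor $\mathcal{P}_k\in\mathrm{CH}^1(J(C_k)\times C_k)$ while the paper works on the fibered product $J\times_A P$ and uses symmetric products $C_i^{(g_i)}$ together with the birational map $\prod_i C_i^{(g_i)}\dashrightarrow J$; both devices have the same action on Albanese varieties, and your points (a) and (b) are exactly the routine verifications the paper also leaves implicit.
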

This proposition applied to the case where $P$ is a Brauer-Severi variety provides many examples where there is a universal $0$-cycle while the Brauer class is nontrivial (see Remark \ref{remabrauer}). Furthermore, by Theorem \ref{theomain}, we conclude that the integral Hodge conjecture for curve classes on abelian varieties implies a positive answer to Question \ref{qintro}(i). Our next result is a weak converse to Proposition \ref{leGHS} for which we introduce the following
\begin{Defi}\label{fisplit} An abelian variety of dimension $g$  is said to be split if there exists a codimension $g$ cycle $\Gamma\in{\rm CH}^g(A\times A)$ such that  the class
$[\Gamma]\in H^{2g}(A\times A,\mathbb{Z})$ acts on $H_*(A,\mathbb{Z})$ as  the      K\"{u}nneth projector $\delta_1$ onto  $H_1(A,\mathbb{Z})$.
\end{Defi}
\begin{rema}{\rm The K\"unneth projectors $\delta_i$ being integral Hodge classes on $A\times A$, they are algebraic if $A\times A$ satisfies the integral Hodge conjecture.  }
\end{rema}

We will establish a few general facts in Section \ref{secgen1split}. In particular we will prove Proposition \ref{ledejaprouve} which says that  splitness of abelian varieties is implied by  the integral Hodge conjecture for curve classes on abelian varieties.

To state our next result, let us say that an abelian variety is   ``Mumford-Tate general'' if $\rho(A)=1$ and the Mumford-Tate group of the Hodge structure on $H^1(A,\mathbb{Q})$ is the symplectic group of the skew-pairing given by the polarization. This assumption is satisfied by the polarized abelian variety parameterized by a very general point in the  moduli space of polarized abelian varieties with polarization of a given  type, but it is in fact a more precise statement, which is satisfied for example by a very general Jacobian of curve or intermediate Jacobian of  a cubic threefold. When $\rho(A)=1$, the group of integral  Hodge classes of degree $2g-2$ is also cyclic generated by a class $\gamma_{\rm min}$.
We will prove in Section \ref{secunibra} the following
\begin{theo}\label{theobrauer0univ}  Let $A$ be a Mumford-Tate general abelian  variety with $\rho(A)=1$.  Assume that the intersection number $c_1(L)\cdot \gamma_{\rm min}$ is even.  Then, if  any Brauer-Severi variety $P\rightarrow A$  admits a universal $0$-cycle,  $A$ is split.
\end{theo}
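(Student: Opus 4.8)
The plan is to feed the hypothesis into a single, well-chosen Brauer--Severi variety, namely a conic bundle with Brauer class of order $2$, and to read off the minimal class from the pushforward of its universal $0$-cycle. Concretely, I would fix a nonzero class $\alpha\in \mathrm{Br}(A)[2]$ and let $\psi\colon P\to A$ be the associated conic bundle (a Brauer--Severi variety of relative dimension $1$), so $\dim P=g+1$. Since $\alpha\neq 0$, $P\to A$ has no rational section and is therefore a genuinely nontrivial test case (a $\mathbb P^1$-bundle with a section $\sigma$ carries the tautological universal $0$-cycle $(\mathrm{id}\times\sigma)_*\Delta_A$, giving no information). By hypothesis $P$ admits a universal $0$-cycle $\Gamma\in\mathrm{CH}^{g+1}(A\times P)$, i.e.\ (Question \ref{questionunivreform}) $[\Gamma]_*\colon H_1(A,\mathbb Z)\to H_1(P,\mathbb Z)$ is the inverse of $\psi_*$. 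I would then set $\Gamma':=(\mathrm{id}\times\psi)_*\Gamma\in\mathrm{CH}^{g}(A\times A)$; it is an honest integral algebraic cycle, and the projection formula together with $\psi_*\circ[\Gamma]_*=\mathrm{id}$ shows that $[\Gamma']\in H^{2g}(A\times A,\mathbb Z)$ acts as the identity on $H_1(A,\mathbb Z)$. The goal is to correct $[\Gamma']$ by integral algebraic classes into the K\"unneth projector $\delta_1$ of Definition \ref{fisplit}.

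The cohomological bookkeeping goes as follows. Rationally $H^*(P,\mathbb Q)=\psi^*H^*(A,\mathbb Q)\oplus \psi^*H^{*-2}(A,\mathbb Q)\cdot h$, where $h$ is the relative hyperplane class; because $\alpha$ has order $2$, $h$ is not integral but $\xi:=2h=-K_{P/A}$ is, and the index $2$ of the conic is reflected in $\psi_*\xi=2$, i.e.\ $\psi_*\bigl(H^2(P,\mathbb Z)\bigr)=2\mathbb Z$. Writing $[\Gamma]=(\mathrm{id}\times\psi)^*a+(\mathrm{id}\times\psi)^*b\cdot(1\times h)$ with $a\in H^{2g+2}$, $b\in H^{2g}$, the relations $\psi_*1=0$, $\psi_* h=1$ give $[\Gamma']=b$. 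On the other side, using the subtraction map $m\colon A\times A\to A$, $m(x,y)=x-y$, one has $[\Delta_A]=m^*(\theta^g/g!)=(\theta_1+\theta_2-\ell)^g/g!$, where $\ell:=\theta_1+\theta_2-m^*\theta\in H^1(A)\otimes H^1(A)$ is the (integral, algebraic) Poincar\'e correspondence class; extracting the part of bidegree $(1,2g-1)$ yields the clean formula
\[
\delta_1=-\,\ell\cdot \mathrm{pr}_2^*\gamma_{\min},\qquad \gamma_{\min}=\frac{\theta^{g-1}}{(g-1)!}.
\]
Thus $\delta_1$ is integrally algebraic as soon as the curve class $\gamma_{\min}$ is ``algebraic after cup product with the divisor $\ell$'', and the whole problem is reduced to producing this one integral algebraic curve class out of $b$.

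Next I would invoke the Mumford--Tate hypothesis. Since the Mumford--Tate group of $H^1(A,\mathbb Q)$ is the full symplectic group, every rational Hodge class on $A\times A$ lies in the subalgebra generated by the degree-$2$ Hodge classes $\theta_1,\theta_2,\ell$, and the induced endomorphism of each $\mathrm{Sp}$-isotypic piece of $H^*(A,\mathbb Q)$ is a scalar. Rational algebraicity of $\delta_1$ is therefore automatic and is \emph{not} the issue; the content is purely integral. Here $\rho(A)=1$ enters decisively: the group of degree-$(2g-2)$ integral Hodge classes is $\mathbb Z\,\gamma_{\min}$, so the $\ell$-linear part of the integral algebraic class $b$ is necessarily of the form $-\ell\cdot\mathrm{pr}_2^*(k\,\gamma_{\min})$ for a single integer $k$, extracted from $b$ by the algebraic operators given by cup product and contraction with $\theta$ and $\ell$. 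Because $-\ell\cdot\mathrm{pr}_2^*(k\gamma_{\min})$ acts on $H^1(A)$ as multiplication by $k$, the identity $[\Gamma']_*=\mathrm{id}$ on $H_1(A)$ forces $k=1$; subtracting the remaining components of $b$, which (being supported on the other isotypic pieces and on the K\"unneth summands $\delta_0,\delta_{2g}=[\mathrm{pt}\times A],[A\times \mathrm{pt}]$) are integrally algebraic, leaves exactly $\delta_1$. This exhibits $A$ as split.

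The main obstacle is precisely the integrality of the extraction in the last step. Normalizing the $\ell$-linear part of $b$ so as to isolate the coefficient of $\gamma_{\min}$ requires dividing by the self-pairing $c_1(L)\cdot\gamma_{\min}=\theta^g/(g-1)!$, while the order-$2$ Brauer class contributes a compensating factor $2$ through $\xi=2h$ and $\psi_*\xi=2$. The delicate point is to show that these two $2$-adic contributions cancel, so that the resulting curve class is an \emph{honest} integral multiple of $\gamma_{\min}$ rather than a half-integral one; this is exactly what is guaranteed by the hypothesis that $c_1(L)\cdot\gamma_{\min}$ be even. Making this $2$-adic cancellation precise --- tracking how the index $2$ of the conic, the class $\xi$, and the parity of $\theta^g/(g-1)!$ interact inside $H^*(P,\mathbb Z)$ --- is where the real work lies; the rest is formal manipulation of correspondences and symplectic invariant theory.
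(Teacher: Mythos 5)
Your reduction to a single conic bundle is where the argument breaks. The only cohomological constraint you ever place on $b=[\Gamma']=\psi_*[\Gamma]$ beyond its $(1,2g-1)$-K\"unneth component (which is $\delta_1$ for free, by the definition of a universal $0$-cycle) is the assertion that ``the remaining components of $b$ \dots are integrally algebraic.'' That assertion is unjustified and is precisely the integral-Hodge-conjecture-type statement the whole theorem is designed to circumvent: for a Mumford--Tate general $A$ the degree-$2g$ integral Hodge classes on $A\times A$ include, besides $\delta_0,\delta_{2g}$ and $l\otimes\gamma_{\rm min}$, the Lefschetz-type projectors onto the primitive isotypic pieces, and none of these is known to be integrally algebraic. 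The paper's mechanism for controlling these components is the obstruction of Lemma \ref{lehdgbrauer}: since $[W_\beta]$ is pushed forward from an integral Hodge class on the Brauer--Severi variety $A\times P_\beta\to A\times A$, one has $[W_\beta]\cup{\rm pr}_2^*\beta=0$ modulo integral classes, and by Lemmas \ref{leeqmubeta} and \ref{lelatticeelem} this forces $[W_\beta]$ to be \emph{divisible by roughly $N/d$} modulo $\langle\delta_0,\delta_1,l\otimes\gamma_{\rm min}\rangle$, where $N$ is the order of $\beta$. One then takes $N$ arbitrarily large so that the error term is divisible enough to be algebraic (using the rational Hodge conjecture for $A\times A$, valid here by Mumford--Tate generality). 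With your single class of order $N=2$ this divisibility statement is vacuous; you never actually use the nontriviality of the Brauer class, and no amount of ``cup product and contraction with $\theta$ and $\ell$'' will integrally isolate the $\ell$-linear part of $b$ from the other isotypic components.

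You have also misidentified where the parity hypothesis enters. It has nothing to do with a $2$-adic cancellation between the index of a conic and $\theta^g/(g-1)!$ --- and you concede that this ``cancellation'' is left unproven, which is an admission that the crucial step is missing. In the paper, after the divisibility argument one knows only that $\delta_1+\alpha_2\, c_1(L)\otimes\gamma_{\rm min}$ is algebraic for some integer $\alpha_2$; applying $({\rm Id}_A,\mu_{-1})$ shows $2\alpha_2\, c_1(L)\otimes\gamma_{\rm min}$ is algebraic, and composing the correspondence $\delta_1+\alpha_2\, c_1(L)\otimes\gamma_{\rm min}$ with itself shows $\delta_1+\alpha_2^2\,(c_1(L)\cdot\gamma_{\rm min})\, c_1(L)\otimes\gamma_{\rm min}$ is algebraic; the evenness of $c_1(L)\cdot\gamma_{\rm min}$ is exactly what makes the second term an integral multiple of the already-algebraic class $2\alpha_2\, c_1(L)\otimes\gamma_{\rm min}$, leaving $\delta_1$ algebraic. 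Your formula $\delta_1=-\ell\cdot{\rm pr}_2^*\gamma_{\rm min}$ is fine (it is the mechanism behind Lemma \ref{exintro} and Proposition \ref{ledejaprouve}(i)), but it only shows that algebraicity of $\gamma_{\rm min}$ implies splitness; it does not let you extract an integral algebraic curve class from the universal $0$-cycle of one conic bundle.
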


\begin{rema} {\rm If $A$ is principally polarized, the condition that $c_1(L)\cdot \gamma_{\rm min}$ is even is equivalent to the dimension of $A$   being even. If the polarization is of type $(1,\ldots,1,d)$, this condition says that $d{\rm dim}\,A$ is even.}
\end{rema}
To summarize our results,  for  a Mumford-Tate general even dimensional  abelian variety $A$ with Picard number $1$, we have  implications as follows:
   $A$ being  a direct summand in a product of Jacobians implies  a positive answer to Question \ref{qintro}(i) and (ii), and in the other direction, a positive answer to Question \ref{qintro}(i) for any $P$ implies that $A$ is  split. The question whether ``split'' implies   that  $A$ is  a direct summand in a product of Jacobians (so that the three statements are equivalent) remains open.

Question \ref{qintro}(ii)  is motivated by the stable rationality problem for the cubic threefold, and more specifically, by the following
\begin{question}\label{questioncubic} Let $X$ be a smooth cubic threefold. Does $X$ admit a universal codimension $2$ cycle?
\end{question}

Questions \ref{qintro} and \ref{questioncubic} are directly linked  by the Iliev-Markushevich-Tikhomirov construction \cite{markuti}, \cite{ilievmarku} which describes a Zariski open set of the Hilbert scheme  $H_{5,1}$ of elliptic curves of degree $5$ in a cubic threefold $X$ as  a fibration into $\mathbb{P}^5$ over a Zariski open set of the intermediate Jacobian $J=J^3(X)$ of $X$. More precisely, we will prove
\begin{prop} \label{procubiceasy} Let $X$ be a smooth  cubic threefold and let $\widetilde{H}_{5,1}$ be a smooth projective model of $H_{5,1}$. Then
$X$ admits a universal codimension $2$ cycle if and only if $\widetilde{H}_{5,1}$ admits a universal $0$-cycle.
\end{prop}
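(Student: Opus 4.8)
The plan is to use the Iliev--Markushevich--Tikhomirov description of $\widetilde{H}_{5,1}$ as a (rational) $\mathbb{P}^5$-fibration over $J=J^3(X)$ as a dictionary translating cycles on the two sides into one another. Write $M=\widetilde{H}_{5,1}$, $d=\dim M=10$, and let $Z\in\mathrm{CH}^2(M\times X)$ be the class of the universal elliptic quintic (a family of curves in $X$, hence a codimension $2$ cycle). By construction the IMT map $\pi:M\dashrightarrow J$ is $m\mapsto\Phi_X(Z_m-Z_0)$, so it is induced by $Z$ and is generically a $\mathbb{P}^5$-fibration. First I would record the $H_1$-level consequences of this: since the general fibre of $\pi$ is rationally connected, $\pi$ induces an isomorphism $\mathrm{Alb}(M)\cong J$, and under the natural identification $H^3(X,\mathbb{Z})_{\mathrm{tf}}\cong H_1(J,\mathbb{Z})$ the morphism $\Phi_Z$ is this isomorphism, i.e. $[Z]_*:H_1(M,\mathbb{Z})\to H^3(X,\mathbb{Z})_{\mathrm{tf}}$ is an isomorphism. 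I would also note the commutative square $\Phi_X\circ Z_*=\Phi_Z\circ\mathrm{alb}_M$ on $0$-cycles, together with the Bloch--Srinivas fact that $\Phi_X:\mathrm{CH}^2(X)_{\mathrm{alg}}\to J$ is an isomorphism (as $\mathrm{CH}_0(X)=\mathbb{Z}$). One works throughout on the smooth projective model $M$, the indeterminacy of $\pi$ and $Z$ being supported in codimension $\geq 1$ and so irrelevant to the $H_1$-level statements.

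For the implication ``$M$ admits a universal $0$-cycle $\Rightarrow$ $X$ admits a universal codimension $2$ cycle'', let $W\in\mathrm{CH}^{d}(A\times M)$, $A=\mathrm{Alb}(M)$, be a universal $0$-cycle and set $\Gamma:=Z\circ W$. A direct dimension count gives $\Gamma\in\mathrm{CH}^2(J\times X)$, the correct codimension. Since composition of correspondences acts by composition on homology, $[\Gamma]_*=[Z]_*\circ[W]_*:H_1(A,\mathbb{Z})\to H^3(X,\mathbb{Z})_{\mathrm{tf}}$. By hypothesis $[W]_*$ is the natural isomorphism $H_1(A,\mathbb{Z})=H_1(M,\mathbb{Z})$ and by the previous step $[Z]_*$ is the isomorphism $H_1(M,\mathbb{Z})\cong H^3(X,\mathbb{Z})_{\mathrm{tf}}$, so $[\Gamma]_*$ is the natural isomorphism; by the characterisation recalled in the introduction, $\Gamma$ is a universal codimension $2$ cycle on $X$.

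For the converse I would first reduce ``universal $0$-cycle on $M$'' to the existence of a relative $0$-cycle of degree $1$ for $\pi$. Concretely, a cycle $\Sigma\in\mathrm{CH}^5(M)$ of degree $1$ over $J$ produces a universal $0$-cycle $W:=(\gamma_\pi)_*\Sigma\in\mathrm{CH}^{d}(J\times M)$, where $\gamma_\pi:M\to J\times M$ is the graph of $\pi$: for general $t$ the fibre $W_t$ is a degree $1$ zero-cycle on $\pi^{-1}(t)$, and since the fibres are rationally connected $\mathrm{alb}_M(W_t-W_0)$ equals $t$ under $\mathrm{Alb}(M)\cong J$. It thus suffices to extract such a $\Sigma$ from a universal codimension $2$ cycle $\Gamma$ on $X$. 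The key geometric input is again that $\Phi_X$ is an isomorphism: all quintics in a fibre $\pi^{-1}(t)$ then share the same class $\zeta(t)\in\mathrm{CH}^2(X)$, and $\zeta(t)-\zeta(0)=\Gamma_t-\Gamma_0$, so $\pi^{-1}(t)$ is exactly the space of effective quintics representing the prescribed class. Reading $\Gamma$ over the generic point $\eta$ of $J$ it represents $\zeta(\eta)$ on $X_{k(J)}$, and via the IMT identification of $M_\eta$ with the space of quintics in that class this should yield a zero-cycle of degree $1$ on $M_\eta$; spreading it out over $J$ gives $\Sigma$, hence a universal $0$-cycle on $M$.

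The main obstacle is precisely this last step. A naive transport of $\Gamma$ back through $Z$ fails, because ${}^tZ\circ\Gamma$ lands in $\mathrm{CH}^1(J\times M)$, a divisor, rather than in $\mathrm{CH}^{d}(J\times M)$; one must genuinely use the fibration to manufacture an honest $0$-cycle. The heart of the matter is to show that a universal codimension $2$ cycle provides a zero-cycle of degree \emph{exactly} $1$ on the generic fibre $M_\eta$ --- equivalently, that it trivialises the index/Brauer-type obstruction of $\pi$. This is the exact analogue, for the IMT fibration, of the phenomenon discussed for Brauer--Severi varieties in Question \ref{qintro}, and making the passage between the cycle class $\Gamma_\eta$ and an effective representing quintic precise (and checking that it is inverse, at the level of $H_1$, to the construction of the second paragraph) is where the argument requires real care.
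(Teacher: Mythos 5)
The first implication (a universal $0$-cycle on $\widetilde{H}_{5,1}$ gives a universal codimension $2$ cycle on $X$) is correct and is exactly the paper's argument: compose with the universal elliptic quintic family and use that its Abel--Jacobi map is an isomorphism. The converse, however, has a genuine gap, and it is precisely the step you flag at the end. You reduce the problem to producing a zero-cycle of degree \emph{exactly} $1$ on the generic fibre $M_\eta$ of the Iliev--Markushevich--Tikhomirov fibration, i.e.\ to showing that the index of this (twisted) $\mathbb{P}^5$-fibration over $k(J)$ is $1$, and then assert that ``reading $\Gamma$ over the generic point'' should yield such a cycle. It does not, at least not for any visible reason: $\Gamma_\eta$ is a formal $\mathbb{Z}$-combination of curves defined over finite extensions of $k(J)$, its components need not be elliptic quintics (nor effective), and even after moving it into the relevant rational equivalence class there is no mechanism forcing the resulting zero-cycle on $M_\eta$ to have degree $1$ rather than some multiple. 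This is exactly the index/Brauer obstruction of Question \ref{qintro}, and your proposal assumes it away rather than resolving it. Note also that your reduction demands more than the statement requires: a universal $0$-cycle is only a constraint on the action on $H_1$, so one does not need a relative degree-$1$ cycle over $J$ at all --- any correspondence from $J$ to $\widetilde{H}_{5,1}$ acting correctly on $H_1$ suffices.

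The paper's converse avoids the generic fibre entirely by routing through the Fano surface of lines $\Sigma$. Shen's universal generation theorem produces, from the universal codimension $2$ cycle $\Gamma_X$, a correspondence $\Gamma_\Sigma\in{\rm CH}^2(J\times\Sigma)$ with $P_{\Sigma*}\circ\Gamma_{\Sigma*}=\Gamma_{X*}$, and Clemens--Griffiths gives that $P_{\Sigma*}:{\rm Alb}(\Sigma)\to J$ is an isomorphism. One then needs only a correspondence $\Gamma_{15}\in{\rm CH}^{10}(\Sigma\times\widetilde{H}_{5,1})$ matching the two Abel--Jacobi isomorphisms (Lemma \ref{lemmadroiteell}); this is built by an explicit geometric construction sending a line $\Delta$ to the union of its residual conic through a fixed point with a fixed plane cubic, which is a degenerate elliptic quintic, and a short Chow-group computation shows the induced map on Albanese varieties is (minus) the right one. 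The composite $\Gamma_{15}\circ\Gamma_\Sigma$ is then a universal $0$-cycle. If you want to salvage your approach, you would need to prove the degree-$1$ statement for $M_\eta$, which is an open problem of the same nature as the one the proposition is meant to illuminate; as written, the converse direction is not proved.
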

 In the case of the fibration $\widetilde{H}_{5,1}\rightarrow J$, the argument  leading to the proof of Proposition  \ref{leGHS} had been used  in  \cite{voisinJAG}  to prove
\begin{theo}\label{theovoisinJAG} A smooth  cubic threefold admits a universal codimension $2$ cycle if the minimal class of its intermediate Jacobian is algebraic.\end{theo}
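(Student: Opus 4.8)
The plan is to reduce, via Proposition \ref{procubiceasy}, to the construction of a universal $0$-cycle on the rationally connected fibration $\widetilde H_{5,1}\to J$ over $J=J^3(X)$, and then to run on this fibration the Graber--Harris--Starr lifting argument underlying Proposition \ref{leGHS}, feeding in the algebraicity of the minimal class in place of the hypothesis that $J$ is a direct summand in a product of Jacobians. By Proposition \ref{procubiceasy} it suffices to prove that $\widetilde H_{5,1}$ admits a universal $0$-cycle. By the Iliev--Markushevich--Tikhomirov construction $\widetilde H_{5,1}$ is birational to a $\mathbb{P}^5$-fibration $\psi:\widetilde H_{5,1}\dashrightarrow J$; in particular the general fibre of $\psi$ is rationally connected, so $\psi_*:\mathrm{Alb}(\widetilde H_{5,1})\to\mathrm{Alb}(J)=J$ is an isomorphism and we identify both Albanese varieties with $J$. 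As in Question \ref{questionunivreform}, what must be produced is a cycle $\Gamma\in\mathrm{CH}^d(J\times\widetilde H_{5,1})$, $d=\dim\widetilde H_{5,1}$, whose class acts on $H_1$ as the inverse of $\psi_*$; thus only the K\"unneth $(1,2d-1)$-component of $[\Gamma]$ is constrained, which is precisely what lets the argument succeed despite our having no control on higher cohomology.

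Set $g=\dim J=5$. The algebraicity of $\gamma_{\rm min}=\theta^{g-1}/(g-1)!$ provides a $1$-cycle $z$ on $J$ with $[z]=\gamma_{\rm min}$; using that all curve classes on $J$ are then algebraic by Theorem \ref{BdGF}, I assume for clarity of exposition that $z$ is an irreducible curve through the origin, the general case being treated componentwise. The decisive numerical input is the Pontryagin identity $\gamma_{\rm min}^{*g}=g!\,[J]$ satisfied by the minimal class: writing $s:\mathrm{Sym}^g z\dashrightarrow J$ for the sum map $(\xi_1,\dots,\xi_g)\mapsto\xi_1+\cdots+\xi_g$, and noting that the ordered map $z^g\to J$ factors through $s$ with generic degree $g!$, this identity forces $\deg s=1$, so that $s$ is birational. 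Hence a general $t\in J$ has a well-defined unordered expression $t=\xi_1+\cdots+\xi_g$ with $\xi_i\in z$.

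Next I lift $z$ to $\widetilde H_{5,1}$. Pulling $\psi$ back over the normalization $\widetilde z$ of $z$ yields a fibration over a smooth curve whose general fibre is rationally connected, so by the Graber--Harris--Starr theorem it admits a section, i.e.\ a morphism $\sigma:\widetilde z\to\widetilde H_{5,1}$ with $\psi\circ\sigma$ equal to the inclusion $\widetilde z\to z\hookrightarrow J$. I then define $\Gamma$ as the closure in $J\times\widetilde H_{5,1}$ of the locus $\{(t,\sigma(\xi_i))\}$ taken over the open set on which $s$ is invertible, so that $\Gamma_t=\sum_{i=1}^g\sigma(\xi_i)$. Under the identification above one has $\psi_*\bigl(\mathrm{alb}_{\widetilde H_{5,1}}(\sigma(\xi))\bigr)=\mathrm{alb}_J(\psi(\sigma(\xi)))=\xi$, whence $\psi_*(\Gamma_*(t))=\sum_i\xi_i=t$, that is $\Gamma_*=\mathrm{id}_J$. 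Thus $\Gamma$ is a universal $0$-cycle on $\widetilde H_{5,1}$, and by Proposition \ref{procubiceasy} the cubic threefold $X$ admits a universal codimension $2$ cycle.

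The main obstacle lies not in any single step but in the global consistency of the construction: one must choose $z$ so that the fibres of $\psi$ over it are genuinely rationally connected, resolve the rational maps to honest morphisms on a smooth projective model, and check that the closure operation does not perturb the relevant K\"unneth component. These indeterminacy and bad loci are of positive codimension and do not affect the $(1,2d-1)$-component, so they are harmless, though they require care. The truly essential feature is the use of the \emph{minimal} class: it is exactly the identity $\gamma_{\rm min}^{*g}=g!\,[J]$, equivalently the birationality $\deg s=1$, that makes $\Gamma_*$ the identity of $J$ rather than a nonzero multiple of it. A curve class $m\gamma_{\rm min}$ with $m>1$ would only yield $\Gamma_*$ equal to an isogeny and would therefore fail to give a universal cycle, which is why the hypothesis is placed precisely on the minimal class.
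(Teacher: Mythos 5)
Your reduction via Proposition \ref{procubiceasy} to producing a universal $0$-cycle on $\widetilde{H}_{5,1}$, and your use of Graber--Harris--Starr to lift curves from the base into the fibration, both match the intended strategy. But the core of your argument has a genuine gap: you take the algebraicity of $\gamma_{\rm min}$ to mean that $\gamma_{\rm min}$ is represented by an irreducible (or at least effective) curve $z\subset J$, and you then run the sum map $\mathrm{Sym}^g z\to J$. For the intermediate Jacobian of a smooth cubic threefold this situation \emph{never} occurs: by the Matsusaka--Ran criterion, an effective $1$-cycle of minimal class generating a principally polarized abelian variety forces it to be a product of Jacobians of curves, which for $J=J^3(X)$ contradicts Clemens--Griffiths. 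Algebraicity of $\gamma_{\rm min}$ only gives $\gamma_{\rm min}=\sum_i\epsilon_i j_{i*}[C_i]$ with signs $\epsilon_i=\pm1$, and neither the symmetric product nor the degree computation via $\gamma_{\rm min}^{*g}=g!\,[J]$ makes sense for such a virtual curve. The phrase ``the general case being treated componentwise'' hides exactly the missing content; once signs are present there is no componentwise sum map, and no decomposition $t=\xi_1+\cdots+\xi_g$ to feed into $\Gamma$.

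The paper closes this gap with Proposition \ref{theoequiv}: from the representation $\gamma_{\rm min}=\sum_i\epsilon_i j_{i*}[C_i]$ one forms the product of Jacobians $J'=\prod_i J(C_i)$ equipped with the (possibly non-ample) divisor $\Theta_{\epsilon_\cdot}=\sum_i\epsilon_i{\rm pr}_i^*\Theta_i$, shows via Lemma \ref{lecomputcoh} that the dual map $\hat{j}:J\cong\widehat{J}\to J'$ embeds $H_1(J,\mathbb{Z})$ as a sublattice on which the induced pairing is unimodular, and concludes from the Hodge--Riemann relations that $J$ is a direct summand of $J'$. Only then does one apply Graber--Harris--Starr, to the honest curves $C_i\subset J(C_i)$, using the symmetric products $C_i^{(g_i)}$ and the birational Abel--Jacobi maps $C_i^{(g_i)}\to J(C_i)$ (Proposition \ref{leGHS}), and finally pushes the resulting cycle down to $J$ via the projection $J'\to J$. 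Your instinct that minimality of the class is what makes a sum map birational is correct, but the birationality actually used is that of $\prod_i C_i^{(g_i)}\to J'$, not of a sum map on a hypothetical effective representative of $\gamma_{\rm min}$ inside $J$ itself. As written, your construction would prove the theorem only when $J$ is a Jacobian of a curve, a case that excludes every smooth cubic threefold.
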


Our last result  is   a partial converse to  Theorem \ref{theovoisinJAG},  which will be proved in Section \ref{sectheocubicsplit}. We will say that a cubic threefold is Mumford-Tate general if its intermediate Jacobian $J$  is.
\begin{theo}\label{theocubicsplit} Let $X$ be a Mumford-Tate general cubic threefold. Then  if $X$ admits a universal codimension $2$ cycle, $J$ is split.
\end{theo}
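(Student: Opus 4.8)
The plan is to run the strategy underlying Theorem~\ref{theobrauer0univ}, but to feed it the extra structure carried by a universal codimension $2$ cycle on the cubic threefold, which is strictly stronger than a bare universal $0$-cycle on a Brauer--Severi variety and is what allows us to dispense with the evenness hypothesis $c_1(L)\cdot\gamma_{\min}\in 2\mathbb{Z}$ — an hypothesis that \emph{fails} here, since $g=\mathrm{dim}\,J=5$ is odd and $c_1(\theta)\cdot\gamma_{\min}=5$. By Proposition~\ref{procubiceasy} the hypothesis is equivalent to the existence of a universal $0$-cycle on $\widetilde{H}_{5,1}\to J$, but it is cleaner to work directly with a universal codimension $2$ cycle $\Gamma\in\mathrm{CH}^2(J\times X)$, whose cohomology class has K\"unneth component $[\Gamma]^{1,3}\in H^1(J)\otimes H^3(X)$ equal to the natural isomorphism $\phi\colon H_1(J)\xrightarrow{\sim}H^3(X,\mathbb{Z})_{\mathrm{tf}}$ of \emph{principally polarized} Hodge structures.

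First I would produce out of $\Gamma$ an integral algebraic cycle on $J\times J$ of the correct codimension $g=5$ whose homological action is a projector onto $H_1$. The key local input is that the K\"unneth projector $\delta_3^X\in\mathrm{CH}^3(X\times X)$ onto $H^3(X)$ is \emph{integrally} algebraic: all the even cohomology of the cubic threefold is generated by the algebraic classes $1,h,\ell,[\mathrm{pt}]$ (with $h$ the hyperplane class and $\ell$ the class of a line, $h\cdot\ell=1$), so $\delta_0^X,\delta_2^X=\ell\times h,\delta_4^X=h\times\ell,\delta_6^X$ are integral algebraic and hence so is $\delta_3^X=[\Delta_X]-\sum_{i\neq 3}\delta_i^X$. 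Consequently $\delta_3^X\circ\Gamma\in\mathrm{CH}^2(J\times X)$ is an integral algebraic cycle whose class is exactly the pure component $\phi=[\Gamma]^{1,3}$, every other K\"unneth piece of $[\Gamma]$ being annihilated. Transposing and transporting the $H^3(X)$-factor back to $H^9(J)=H_1(J)$ by cup product with a degree-$8$ class, one assembles a cycle $Z\in\mathrm{CH}^5(J\times J)$ whose action is supported on $H_1(J)$, where it equals $\psi'\circ\phi$ for the transported map $\psi'\colon H^3(X)\to H_1(J)$.

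Next I would identify this action using Mumford--Tate generality. Since $\rho(J)=1$ and the Mumford--Tate group is the full symplectic group, the only degree-$8$ Hodge classes on $J$ are the multiples of $\theta^4/4!=\gamma_{\min}$, so the degree-$8$ transport is essentially forced and $\psi'\circ\phi$ is a scalar multiple of $\mathrm{id}_{H_1(J)}$; because $\phi$ matches the two \emph{unimodular} intersection forms (the principal polarization of $J$ and Poincar\'e duality on the threefold $X$), that scalar is a pairing of unimodular lattices and equals $\pm1$ at the level of the rational projector. Thus $[Z]_*=c\,\delta_1$ for a nonzero rational $c$, all other $\delta_i$-contributions being absent by the support computation. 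Over $\mathbb{Q}$ this merely reproves the automatic rational algebraicity of $\delta_1$ (already visible from the graphs of multiplication maps), so the rational statement carries no content.

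The hard part, and the entire point, is integrality. Realizing the degree-$8$ transport over $\mathbb{Z}$ on the nose would amount to cupping with $\gamma_{\min}=\theta^4/4!$ itself, i.e.\ to the algebraicity of the minimal class — precisely the stronger hypothesis of Theorem~\ref{theovoisinJAG} that we are \emph{not} allowed to assume; cupping instead with the integral algebraic class $\theta^4$ only yields $24\,\delta_1$ and costs the factor $4!$. The crux is therefore to show that $\delta_1$ is hit exactly over $\mathbb{Z}$, without any surviving denominator or parity defect — exactly the place where the generic Brauer--Severi argument of Theorem~\ref{theobrauer0univ} invoked $c_1(L)\cdot\gamma_{\min}$ even and where the odd value $5$ would otherwise obstruct. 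The way through is to leverage the finer integral structure that is genuinely present here and absent for a bare universal $0$-cycle: on the one hand the \emph{integral} isomorphism of unimodular polarized lattices $H_1(J)\cong H^3(X,\mathbb{Z})_{\mathrm{tf}}$ supplied by $\phi$, through which the integral projector $\delta_3^X$ should transport without denominators; on the other hand, via Proposition~\ref{procubiceasy}, the relative hyperplane class of the $\mathbb{P}^5$-fibration $\widetilde{H}_{5,1}\to J$ and its powers (controlled by the Grothendieck relation and the universal $0$-cycle, which furnishes a multisection of controlled degree), which realize the degree-raising integrally with denominator dividing the index of the fibration rather than $4!$. Showing that these two integral inputs combine — so that Mumford--Tate generality, which pins down the integral Hodge lattice of $J\times J$, leaves no room for an obstructing class and the resulting cycle acts as $\delta_1$ and not a proper multiple — is the delicate step; carrying it out yields an integral algebraic cycle inducing $\delta_1$, which is precisely the splitness of $J$ in the sense of Definition~\ref{fisplit}.
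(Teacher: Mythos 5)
There is a genuine gap: your proposal correctly isolates the difficulty (getting $\delta_1$ on the nose over $\mathbb{Z}$, rather than a multiple, when the naive transport of $H^3(X)$ back to $H_1(J)$ via $\theta^4$ costs a factor of $4!$ and the algebraicity of $\gamma_{\rm min}$ cannot be assumed), but it then explicitly defers the resolution --- ``showing that these two integral inputs combine \dots\ is the delicate step'' --- without producing any concrete cycle or any reason why the denominators and parity defects disappear. Since that step \emph{is} the theorem, what you have written is a correct setup plus a restatement of the problem, not a proof. In particular, nothing in your text explains how to beat the obstruction you yourself flag, namely that $c_1(\theta)\cdot\gamma_{\rm min}=5$ is odd, so the parity trick of Theorem \ref{theobrauer0univ} is unavailable.

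The paper's actual argument runs through the Fano surface of lines $\Sigma=F_1(X)$ rather than through $X\times X$ or $\widetilde H_{5,1}$. Shen's universal generation theorem converts $\Gamma$ into $\Gamma'\in{\rm CH}^2(J\times\Sigma)$ with $j_*\circ[\Gamma']_*={\rm Id}$ on $H_1(J,\mathbb{Z})$, and one then corrects the self-correspondence $\Gamma''=\Gamma'\circ j$ of $\Sigma$ K\"unneth piece by K\"unneth piece. The integral inputs that make this work, and that are absent from your proposal, are: (a) $2\delta_{1,\Sigma}$ and $2\delta_{2,\Sigma}$ are algebraic, via the Poincar\'e divisor on $J\times J$ and the class $l$ with $2l=j^*\theta$ (Lemma \ref{lekunprojsigma}, Corollary \ref{corotardifquoiqueutile}); (b) $[\Gamma'']_3$ acts on $H_3(\Sigma,\mathbb{Z})$ by an \emph{even} scalar, which rests on the lattice computation that the unimodular pairing on $H^3(J,\mathbb{Z})$ restricts to \emph{four} times the theta pairing on $\frac{\theta^3}{3!}\cup H^1(J,\mathbb{Z})$ (Lemmas \ref{leh32} and \ref{lepolcomp}); (c) the sublattice $\mathbb{Z}l\oplus(\mathbb{Z}l)^{\perp}\subset H_2(\Sigma,\mathbb{Z})$ has \emph{odd} index because $l^2=5$, with the two summands rigid as Hodge structures by Mumford--Tate generality. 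A final case analysis on the parity of $5\lambda_2$ then yields an odd multiple of $\delta_{1,\Sigma}$ algebraic, which together with (a) gives $\delta_{1,\Sigma}$, hence $\delta_1$ on $J\times J$. If you want to salvage your approach, you would need to supply analogues of (a)--(c) for whatever correspondence you build out of $\delta_3^X\circ\Gamma$; as it stands, Mumford--Tate generality only pins down the answer up to a rational scalar, which is exactly the information you already had for free from the graphs of multiplication maps.
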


{\bf Thanks.} {\it I thank Jean-Louis Colliot-Th\'{e}l\`{e}ne and James Hotchkiss  for inspiring discussions and correspondence. I also thank  Olivier de Gaay Fortman for interesting exchanges and the referee for his/her careful reading.}
\section{Curve classes on abelian varieties}
We establish  in this section Theorem \ref{theomain}. In the case of abelian varieties equipped with a unimodular line bundle, the following stronger statement holds.

\begin{prop}\label{theoequiv} Let $A$ be a $g$-dimensional  abelian variety equipped with  a line bundle $L$ such that $L^g=\pm g!$ (that is, $L$ is unimodular). Then if $A$ satisfies the integral Hodge conjecture for degree $2$ integral Hodge homology  classes (or ``curve classes''), it is a direct summand  in a product of Jacobians.
\end{prop}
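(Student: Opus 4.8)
The plan is to extract from the algebraicity of the minimal class an explicit splitting at the level of $H_1$, and then to promote it to a splitting of abelian varieties using the self-duality furnished by the unimodular line bundle $L$. Throughout I would work with the linear-algebra model $\Lambda:=H_1(A,\mathbb{Z})$, so that $H_2(A,\mathbb{Z})\cong\wedge^2\Lambda$ and $H^2(A,\mathbb{Z})\cong\wedge^2\Lambda^*$ via Poincar\'e duality. Since $L$ is unimodular, the alternating form $\psi_A:=c_1(L)\in\wedge^2\Lambda^*$ is a unimodular symplectic form; hence $\phi_L\colon A\to\widehat A$ is an \emph{isomorphism}, and on $H_1$ it is the form $\psi_A\colon\Lambda\xrightarrow{\sim}\Lambda^*$. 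Choosing a symplectic basis (which exists precisely because $\psi_A$ is unimodular), one checks that the minimal class $\gamma_{\min}=c_1(L)^{g-1}/(g-1)!$ is integral and that, read in $\wedge^2\Lambda$, it is the symplectic bivector $\omega_A$, i.e. the element corresponding to $\psi_A^{-1}\colon\Lambda^*\to\Lambda$.

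Next I would feed in the hypothesis. The integral Hodge conjecture for curve classes produces an algebraic $1$-cycle $Z=\sum_j n_j C_j$ (with $n_j\in\mathbb{Z}$) such that $[Z]=\gamma_{\min}$. After normalising the $C_j$ and fixing base points, each $f_j\colon\widetilde C_j\to A$ factors as $\widetilde C_j\hookrightarrow J_j:=J\widetilde C_j\xrightarrow{u_j}A$, where the first map is the Abel--Jacobi embedding and $u_j$ is a homomorphism of abelian varieties. By the Poincar\'e formula, the Abel--Jacobi image of $\widetilde C_j$ represents the minimal class $\omega_{J_j}\in\wedge^2\Lambda_j$, $\Lambda_j:=H_1(J_j,\mathbb{Z})$, which corresponds to the inverse $\psi_j^{-1}$ of the principal polarization. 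Pushing forward and summing, the equality $[Z]=\gamma_{\min}$ becomes $\sum_j n_j\,(\wedge^2 u_{j*})(\omega_{J_j})=\omega_A$ in $\wedge^2\Lambda$, which, read as an identity of maps $\Lambda^*\to\Lambda$, is
\[
\sum_j n_j\, u_{j*}\,\psi_j^{-1}\,u_j^{*}=\psi_A^{-1},
\]
where $u_j^{*}\colon\Lambda^*\to\Lambda_j^*$ is the transpose of $u_{j*}$.

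Finally I would convert this into a genuine splitting. Composing on the right with $\psi_A$ gives $\sum_j n_j\,u_{j*}\,w_j=\mathrm{id}_\Lambda$, where $w_j:=\psi_j^{-1}\,u_j^{*}\,\psi_A\colon\Lambda\to\Lambda_j$ is the map induced on $H_1$ by $v_j:=\phi_{J_j}^{-1}\circ\widehat{u_j}\circ\phi_L\colon A\to J_j$. Unimodularity of $L$ is exactly what is needed here: because $\phi_L$ is an isomorphism (not merely an isogeny), $w_j$ lands in the integral lattice $\Lambda_j$ and $v_j$ is an honest homomorphism. Assembling $u\colon\prod_j J_j\to A$, $(x_j)\mapsto\sum_j n_j u_j(x_j)$, and $v\colon A\to\prod_j J_j$, $a\mapsto(v_j(a))_j$, the displayed identity reads $(u\circ v)_*=\mathrm{id}$ on $H_1(A,\mathbb{Z})$. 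Since a homomorphism of abelian varieties is determined by its action on $H_1$, this forces $u\circ v=\mathrm{id}_A$, so $v$ is a section of $u$ and $A$ is a direct summand, as an abelian variety, of the product of Jacobians $\prod_j J_j$.

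I expect the main obstacle to be the integral bookkeeping of the second and third steps: verifying over $\mathbb{Z}$ (not just over $\mathbb{Q}$) both the Poincar\'e-type identity $f_{j*}[\widetilde C_j]=(\wedge^2 u_{j*})(\omega_{J_j})$ and the identification of $\gamma_{\min}$ and of each $\omega_{J_j}$ with the corresponding symplectic bivector. Indeed it is precisely the unimodularity of $\psi_A$ that keeps the dual construction $w_j=\psi_j^{-1}u_j^{*}\psi_A$ inside the integral lattices, and thereby upgrades the rational splitting (a mere isogeny factorisation, available more generally) into the sought-after direct-summand decomposition.
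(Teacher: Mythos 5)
Your proof is correct and is essentially the paper's argument: your section $v=(v_j)$ with $v_j=\phi_{J_j}^{-1}\circ\widehat{u_j}\circ\phi_L$ is exactly the paper's dual embedding $\hat{j}:A\cong\widehat{A}\to\prod_i J(C_i)$ given by the pullbacks $j_i^*$ on ${\rm Pic}^0$, and your identity $\sum_j n_j\,u_{j*}\psi_j^{-1}u_j^{*}=\psi_A^{-1}$ (Poincar\'e formula plus unimodularity of $\psi_A$ and of the $\psi_j$) is precisely the content of the paper's Lemma \ref{lecomputcoh}, which says that $\hat{j}^*\Theta_{\epsilon_\cdot}$ is cohomologous to $\pm c_1(L)$. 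The only (cosmetic) divergence is at the very end: you exhibit the retraction $u=\sum_j n_j u_j$ explicitly and check $u\circ v={\rm id}$ on $H_1$, whereas the paper deduces the splitting from the unimodularity of the restriction of $\langle\,,\,\rangle_J$ to $\hat{j}_*H_1(A,\mathbb{Z})$ together with the compatibility of the orthogonal decomposition with the Hodge structure.
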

\begin{rema}{\rm  We do not ask that $L$ is ample in Proposition \ref{theoequiv}, so $A$ is not necessarily a principally polarized abelian variety.   }
\end{rema}
\begin{rema} {\rm Even if $L$ is ample, that is, a principal polarization, we just ask that $A$ is a direct summand as an abelian variety, and not that the natural polarization on the product of Jacobians restricts to $L$. Indeed, the last condition is much too strong by the usual Clemens-Griffiths argument: if $A$ is simple, this would imply that $A$ is isomorphic to the Jacobian of a curve.}
\end{rema}
\begin{rema}\label{rema1} {\rm It is proved in \cite{degaybeck} (see also Theorem \ref{BdGF}),  that a product of Jacobians satisfies the integral Hodge conjecture for curve classes. If $j:A\hookrightarrow J$ is the inclusion of a direct summand in such a product $J$, then for any Hodge class $\alpha$ on $A$, $j_*\alpha$ is an integral Hodge class on $J$, and $\alpha$ is algebraic on $A$ if and only if it is algebraic on $J$, using a left inverse  $\pi:J\rightarrow A$ of $j$. Hence the implication in Proposition \ref{theoequiv} is in fact an equivalence.}
\end{rema}

\begin{coro}\label{coromain} (Cf. Theorem \ref{theomain}.) Let $A$ be any abelian variety. Then the integral Hodge conjecture for curve classes on  $A\times \widehat{A}$  implies that $A$  is  a direct summand in a product of Jacobians.

The integral Hodge conjecture for curve classes on abelian varieties thus implies that any abelian variety is  a direct summand in a product of Jacobians.
\end{coro}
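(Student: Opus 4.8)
The plan is to deduce Corollary \ref{coromain} directly from Proposition \ref{theoequiv}, the only extra ingredient being the production of a unimodular line bundle on $A\times\widehat{A}$. The natural candidate is the Poincaré bundle $\mathcal{P}$ on $A\times\widehat{A}$, and the first step is to check that it is unimodular. Since $\dim(A\times\widehat{A})=2g$, I must show that $\int_{A\times\widehat{A}}c_1(\mathcal{P})^{2g}=\pm(2g)!$.

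For this I would compute $c_1(\mathcal{P})\in H^2(A\times\widehat{A},\mathbb{Z})$ via its K\"unneth decomposition. The defining property of $\mathcal{P}$ --- that it restricts trivially to $A\times\{0\}$ and to $\{0\}\times\widehat{A}$ --- kills the components in $\wedge^2 H^1(A,\mathbb{Z})$ and $\wedge^2 H^1(\widehat{A},\mathbb{Z})$, so $c_1(\mathcal{P})$ lies in $H^1(A,\mathbb{Z})\otimes H^1(\widehat{A},\mathbb{Z})$, where, under the canonical identification $H^1(\widehat{A},\mathbb{Z})\cong H^1(A,\mathbb{Z})^\vee$, it is the identity element of ${\rm End}(H^1(A,\mathbb{Z}))$. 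Writing $c_1(\mathcal{P})=\sum_{i=1}^{2g}e_i\otimes f_i$ for a basis $(e_i)$ of $H^1(A,\mathbb{Z})$ and the dual basis $(f_i)$, the classes $e_i\otimes f_i$ are square-zero and mutually commuting of degree $2$, so the multinomial expansion leaves only the squarefree term: $c_1(\mathcal{P})^{2g}=(2g)!\,\prod_{i=1}^{2g}(e_i\otimes f_i)$, which is $\pm(2g)!$ times the fundamental class of $A\times\widehat{A}$. Equivalently, this is the classical statement that $\chi(\mathcal{P})=\pm1$, that is, that $\phi_{\mathcal{P}}\colon A\times\widehat{A}\to\widehat{A}\times A$ is an isomorphism. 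This computation is really the only substantive point; everything else is formal. Note that no ampleness of $\mathcal{P}$ is needed, as Proposition \ref{theoequiv} only requires a unimodular line bundle.

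With unimodularity in hand, I apply Proposition \ref{theoequiv} to the pair $(A\times\widehat{A},\mathcal{P})$: if $A\times\widehat{A}$ satisfies the integral Hodge conjecture for curve classes, it is a direct summand, as an abelian variety, in a product of Jacobians. It then remains to descend from $A\times\widehat{A}$ to $A$. The inclusion $j_1\colon A\hookrightarrow A\times\widehat{A}$, $a\mapsto(a,0)$, together with the first projection $\pi_1$, exhibits $A$ as a direct summand of $A\times\widehat{A}$; composing $j_1$ and $\pi_1$ with the embedding and retraction of $A\times\widehat{A}$ into the product of Jacobians shows, by transitivity of the direct-summand relation (the composite retraction splits the composite inclusion), that $A$ itself is a direct summand in a product of Jacobians. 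This proves the first assertion. The final statement is then immediate: if the integral Hodge conjecture for curve classes holds for all abelian varieties, it holds in particular for $A\times\widehat{A}$ for every $A$, so every abelian variety is a direct summand in a product of Jacobians.
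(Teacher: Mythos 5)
Your proof is correct and follows essentially the same route as the paper: equip $A\times\widehat{A}$ with the Poincar\'e bundle, apply Proposition \ref{theoequiv}, and descend to $A$ by transitivity of the direct-summand relation. The only difference is that you spell out the K\"unneth computation showing $\int c_1(\mathcal{P})^{2g}=\pm(2g)!$, which the paper simply asserts as ``the Zarhin trick''; your verification is accurate.
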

\begin{proof}
 If $A$ is an abelian variety, $A\times \widehat{A}$ admits a line bundle $L$ as in Proposition \ref{theoequiv}, namely the Poincar\'{e} divisor (this  is the  starting point in  the Zarhin trick). As $A$ is a direct summand in $A\times \widehat{A}$, it is a direct summand in a product of Jacobians if so is $A\times \widehat{A}$.  Proposition \ref{theoequiv} applies to $A\times \widehat{A}$ and thus the integral Hodge conjecture for curve classes on $A\times \widehat{A}$ implies that $A\times \widehat{A}$ is a direct summand in a product of Jacobians.
\end{proof}
\begin{rema}\label{rema2} {\rm  Again, the implication of the corollary is in fact an equivalence by \cite{degaybeck}, since if $A$ is a direct summand in a product of Jacobians, then $A\times \widehat{A}$ is also a direct summand in a product of Jacobians, hence satisfies the integral Hodge conjecture for curve classes as  explained in Remark \ref{rema1}}.\end{rema}
\begin{proof}[Proof of Proposition \ref{theoequiv}] We consider the minimal class $\gamma_{\rm min}:=\frac{c_1(L)^{g-1}}{(g-1)!}$. By assumption, it is algebraic on $A$, hence there exist smooth projective curves $C_i$, and morphisms $j_i:C_i\rightarrow A$, such that
\begin{eqnarray}\label{eqclass} \gamma_{\rm min}=\sum_i\epsilon_i j_{i*}[C_i]_{\rm fund}\,\,{\rm in}\,\,H_2(A,\mathbb{Z}),
\end{eqnarray}
where $\epsilon_i=\pm1$, and where we used the complex orientation of $A$ to define  the Poincar\'{e} duality isomorphism $H^{2g-2}(A,\mathbb{Z})\cong H_2(A,\mathbb{Z})$.
Using $L$, the dual abelian variety $\widehat{A}:={\rm Pic}^0(A)$ is isomorphic to $A$. We have a natural morphism
\begin{eqnarray}\label{eqpullback}  \hat{j}: A\cong \widehat{A}\stackrel{(\hat{j}_i)}{\rightarrow} \prod_i{\rm Pic}^0(C_i)=\prod_iJ(C_i)
\end{eqnarray}
which is induced by the pull-back maps $\hat{j}_i:=j_i^*:{\rm Pic}^0(A)\rightarrow {\rm Pic}^0(C_i)$.
The abelian variety $ J:=\prod_iJ(C_i)$ admits the divisor  $\Theta_{\epsilon_\cdot}$ defined as
\begin{eqnarray}\label{eqthetaepsilon} \Theta_{\epsilon_\cdot}:=\sum_i\epsilon_i{\rm pr}_i^*\Theta_i,
\end{eqnarray}
where ${\rm pr}_i: J\rightarrow J(C_i)$ is the natural projection and $\Theta_i$ is the natural Theta-divisor on $J(C_i)$. The classes  $[\Theta_{\epsilon_\cdot}]\in H^2(J,\mathbb{Z})$, resp.   $c_1(L)\in H^2(A,\mathbb{Z})$  provide equivalently skew-symmetric  intersection pairings $\langle\,,\,\rangle_J$ on $H_1(J,\mathbb{Z})$, resp.  $\langle\,,\,\rangle_L$ on  $H_1(A,\mathbb{Z})$.
We now have
\begin{lemm} \label{lecomputcoh} The restriction $\hat{j}^*\Theta_{\epsilon_\cdot}$ is cohomologous to $\epsilon c_1(L)$, where $\epsilon=\pm1$ is the sign of $c_1(L)^g$. Equivalently, the restriction of $\langle\,,\,\rangle_J$ to $\hat{j}_*H_1(A,\mathbb{Z})$ is equal to $\epsilon\langle\,,\,\rangle_L$.
\end{lemm}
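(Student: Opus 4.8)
The plan is to compute the pulled-back class $\hat{j}^*[\Theta_{\epsilon_\cdot}]$ as a skew form on $H_1(A,\mathbb{Z})$ and to recognise it, via the projection formula and the homological identity (\ref{eqclass}), as the operator given by cup product with $\gamma_{\rm min}$; unimodularity of $L$ then forces this operator to be $\epsilon$ times $\langle\,,\,\rangle_L$. First, from (\ref{eqthetaepsilon}) and the naturality ${\rm pr}_i\circ\hat{j}=\hat{j}_i$ one has $\hat{j}^*[\Theta_{\epsilon_\cdot}]=\sum_i\epsilon_i\,\hat{j}_i^*[\Theta_i]$ in $H^2(A,\mathbb{Z})$. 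The structural input is the identification of each summand: since $\hat{j}_i=j_i^*$ is pull-back on ${\rm Pic}^0$ and $j_i={\rm alb}_i\circ u_i$ factors through the Abel--Jacobi map $u_i$, with $u_i^*=\phi_{\Theta_i}^{-1}$ and $(\widehat{{\rm alb}_i})_*={\rm alb}_i^*$ on $H^1$, the induced map $\hat{j}_{i*}\colon H_1(\widehat{A})=H^1(A)\to H_1(J(C_i))=H^1(C_i)$ is the ordinary pull-back $j_i^*$. Under the canonical identification $H_1(J(C_i))=H_1(C_i)$, the polarization form $\langle\,,\,\rangle_{\Theta_i}$ is the cup-product form $q_i$ on $H^1(C_i)$, so $\hat{j}_i^*[\Theta_i]$ is the skew form $(v,v')\mapsto q_i(j_i^*v,j_i^*v')$ on $H^1(A)$.

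Next I would apply the projection formula to each curve: writing $\ell:=c_1(L)$, one has
$$q_i(j_i^*v,j_i^*v')=\int_{C_i}j_i^*(v\cup v')=\langle j_{i*}[C_i]_{\rm fund},\,v\cup v'\rangle_A,$$
the pairing on the right being $H_2(A,\mathbb{Z})\times H^2(A,\mathbb{Z})\to\mathbb{Z}$. Summing against the signs $\epsilon_i$ and invoking (\ref{eqclass}) collapses the total to $\langle\gamma_{\rm min},v\cup v'\rangle_A$. Substituting $\gamma_{\rm min}=\ell^{g-1}/(g-1)!$ turns this into
$$\hat{j}^*[\Theta_{\epsilon_\cdot}]\colon\ (v,v')\longmapsto\frac{1}{(g-1)!}\int_A\ell^{g-1}\cup v\cup v',$$
i.e. the bilinear form obtained from the hard Lefschetz isomorphism $\ell^{g-1}/(g-1)!\colon H^1(A)\xrightarrow{\sim}H^{2g-1}(A)$ followed by Poincar\'{e} duality.

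Finally I would evaluate this form by symplectic linear algebra on $(H^1(A,\mathbb{R}),\ell)$. In a symplectic basis $e_1,\dots,e_g,f_1,\dots,f_g$ with $\ell=\sum_k e_k\wedge f_k$ one computes $\frac{1}{(g-1)!}\int_A\ell^{g-1}\wedge e_k\wedge f_k=c_1(L)^g/g!$ and that all other pairings of basis vectors vanish; hence the form equals $(c_1(L)^g/g!)\,\ell$. Since $L$ is unimodular, $c_1(L)^g/g!=\pm1=\epsilon$ is the sign of $c_1(L)^g$, so $\hat{j}^*[\Theta_{\epsilon_\cdot}]=\epsilon\,\ell=\epsilon\,\langle\,,\,\rangle_L$; transported through $A\cong\widehat{A}$ this is exactly the two equivalent assertions of the lemma. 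This is the one place where the hypothesis $L^g=\pm g!$ is used: without it the same computation would yield $(c_1(L)^g/g!)\,\ell$ with $c_1(L)^g/g!$ a larger integer.

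I expect the main obstacle to lie in the first paragraph, namely in pinning down the identification $\hat{j}_{i*}=j_i^*$ together with the compatibility of $\langle\,,\,\rangle_{\Theta_i}$ with the intersection form on the curve: this is where the duality between pull-back on ${\rm Pic}^0$ and pull-back on $H^1$, and the normalisation $u_i^*=\phi_{\Theta_i}^{-1}$, must be handled with correct signs. Once this identification is secured, the projection formula and the Lefschetz computation are routine, and the unimodularity hypothesis enters only at the final step.
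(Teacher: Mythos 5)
Your proposal is correct and follows essentially the same route as the paper: both rest on the duality identification $\hat{j}_{i*}=j_i^*$ (the paper's (\ref{eqgeneralduality})), the decomposition (\ref{eqclass}) of $\gamma_{\rm min}$ into curve classes, and the symplectic-basis evaluation of $\frac{1}{(g-1)!}\int_A c_1(L)^{g-1}\cup v\cup v'$ (the paper's (\ref{eqintgammamin})). The only difference is order of presentation --- you start from $\hat{j}^*\Theta_{\epsilon_\cdot}$ and end with the Lefschetz computation, while the paper establishes the bilinear identity for $\gamma_{\rm min}$ first and transports it by duality at the end.
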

We postpone the proof of the lemma and conclude the proof of the proposition.
 As $\frac{L^g}{g!}=\pm 1$, the intersection pairing $\langle\,,\,\rangle_A$ is unimodular. By Lemma \ref{lecomputcoh}, the restriction of $\langle\,,\,\rangle_J$ to  $\hat{j}_*H_1(A,\mathbb{Z})$ is unimodular.  It follows  that there is a direct sum decomposition
\begin{eqnarray} \label{eqdecompp} H_1(J,\mathbb{Z})=H_1(A,\mathbb{Z})\oplus H_1(A,\mathbb{Z})^{\perp},
\end{eqnarray}
which is orthogonal with respect to $\langle\,,\,\rangle_J$. As the pairing is induced by a $(1,1)$-class, it satisfies the Hodge-Riemann relations and thus the orthogonal decomposition (\ref{eqdecompp}) is compatible with the Hodge decomposition, hence induces a direct sum decomposition
$$J=A\oplus A^{\perp}.$$
\end{proof}
\begin{proof}[Proof of Lemma \ref{lecomputcoh}] The class $c_1(L)$,  or equivalently the pairing $\langle\,,\,\rangle_A$, induces an isomorphism
$\iota_L: H^1(A,\mathbb{Z})\cong H_1(A,\mathbb{Z})$. We claim that for any $\alpha,\,\beta \in H^1(A,\mathbb{Z})$,
 \begin{eqnarray}\label{eqintgammamin} \int_A\gamma_{\rm min}\cup \alpha\cup \beta=\langle \iota_L(\alpha),\iota_L(\beta)\rangle_L,
 \end{eqnarray}
 where the orientation of $A$ is chosen in such a way that $\int_A c_1(L)^g=g!$.
 This formula is standard (see \cite{debarrebook}) in the context of principally polarized abelian varieties. It  is proved using a basis $e_1,\ldots,e_{2g}$ of $H^1(A,\mathbb{Z})$ for which
 $c_1(L)=\sum_{i=1}^ge_i\wedge e_{i+g}$ in $ \wedge^2H^1(A,\mathbb{Z})$. Then the isomorphism $\iota_L$ maps $e_i$ to $e_{i+g}^*$ and $e_{i+g}$ to $-e_{i}$ for $i\leq g$. Furthermore for $i,\,j\leq g$
 \begin{eqnarray} \label{eqsumgammaminprodL}\langle e_i^*,e_{g+j}^*\rangle_L=\delta_{ij}\end{eqnarray}
 while $\langle e_i^*,e_{j}^*\rangle_L=0$ for $i\leq g,\,j\leq g$.
 By definition,
 $$ \gamma_{\rm min} =\sum_i e_1\wedge e_{1+g}\wedge \ldots \widehat{e_i\wedge e_{i+g}}\ldots\wedge e_g\wedge e_{2g}\,\,{\rm in}\,\,\bigwedge^{2g-2}H^1(A,\mathbb{Z})\cong H^{2g-2}(A,\mathbb{Z})$$
 so that one gets for any $i,\,j$
 \begin{eqnarray}\label{eqsumgammamin} \int_A\gamma_{\rm min}\cup e_i\cup e_j=0\,\,{\rm for}\,i<j,\,j\not=i+g\\
 \nonumber
 \int_A\gamma_{\rm min}\cup e_i\cup e_{i+g}=1.
 \end{eqnarray}
 Comparing (\ref{eqsumgammaminprodL}) and (\ref{eqsumgammamin}) gives the result.

 If we see as in (\ref{eqclass}) $\gamma_{\rm min}$ as a degree $2$ homology class (rather than a degree $2g-2$  cohomology class) on $A$, (\ref{eqintgammamin}) rewrites as
 \begin{eqnarray}\label{eqintgammamin1} \int_{\gamma_{\rm min}} \alpha\cup \beta=\epsilon \langle \iota_L(\alpha),\iota_L(\beta)\rangle_L,
 \end{eqnarray}
 since the complex orientation of $A$ and the orientation used in (\ref{eqintgammamin}) differ by the sign $\epsilon$.
Using (\ref{eqclass}), we rewrite (\ref{eqintgammamin1}) as
 \begin{eqnarray}\label{eqintgammaminpullback}\langle \iota_L(\alpha),\iota_L(\beta)\rangle_L=\epsilon\sum_{i}\epsilon_i\int_{C_i}j_i^*\alpha\cup j_i^*\beta.
 \end{eqnarray}
 We apply in turn (\ref{eqintgammamin1}) to each Jacobian $J(C_i)$ equipped with its principal polarization $\Theta_i$ and minimal class $[C_i]$ and get
 \begin{eqnarray}\label{eqintgammaminpullbackmieux}\langle \iota_L(\alpha),\iota_L(\beta)\rangle_L=\epsilon\sum_{i}\epsilon_i\langle \iota_{\Theta_i}(j_i^*\alpha), \iota_{\Theta_i}(j_i^*\beta)\rangle_{\Theta_i}.
 \end{eqnarray}
 This concludes the proof of Lemma \ref{lecomputcoh} by general duality, recalling that we are looking at the dual embedding
  $$A\cong \widehat{A}\stackrel{\hat{j}=(\hat{j_i})}{\rightarrow} \prod J(C_i),$$
   where  $\hat{j_i}=j_i^*: \widehat{A}\rightarrow J(C_i)$.  One observes that, by definition of $\hat{j}_i:A\rightarrow J(C_i)$, for any $\alpha\in H^1(A,\mathbb{Z})$, \begin{eqnarray}\label{eqgeneralduality} \iota_{\Theta_i}(j_i^*\alpha)=\hat{j_{i}}_*(\iota_L(\alpha))\,\,{\rm in}\,\,H_1(J(C_i),\mathbb{Z}).
   \end{eqnarray}
   This allows to rewrite the right hand side of (\ref{eqintgammaminpullbackmieux}) as follows
   \begin{eqnarray}\epsilon\label{eqfinfin}\sum_{i}\epsilon_i\langle \iota_{\Theta_i}(j_i^*\alpha), \iota_{\Theta_i}(j_i^*\beta)\rangle_{\Theta_i}=\epsilon
   \sum_{i}\epsilon_i\langle \hat{j_{i}}_*(\iota_L(\alpha)), \hat{j_{i}}_*(\iota_L(\beta)))\rangle_{\Theta_i}\\
   \nonumber
   =\epsilon\langle \hat{j}_*(\iota_L(\alpha)),\hat{j}_*(\iota_L(\beta))\rangle_{J},
   \end{eqnarray}
   which concludes the proof by (\ref{eqintgammaminpullbackmieux}).
\end{proof}

\subsection{Split abelian varieties\label{secgen1split}}
The algebraicity with $\mathbb{Q}$-coefficients of the K\"{u}nneth projectors of any $g$-dimensional  abelian variety $A$ is well-known (see \cite{lieberman}) and  follows from the fact that $A\times A$ contains codimension $g$ subvarieties $\Gamma_i$ defined as the graph of multiplication by $i$ for any integer $i$. It also contains their transpose $^t\Gamma_i$. As $[\Gamma_i]_*$ acts by multiplication by $i^k$ on $H_k(A\times A,\mathbb{Z})$, one gets
\begin{eqnarray}\label{eqgraphmult} [\Gamma_i]=\sum_{k=0}^{2g} i^k\delta_k,\,\\
\nonumber
[^t\Gamma_i]=\sum_{k=0}^{2g} i^{2g-k}\delta_k.
\end{eqnarray}
These equations imply that the $\delta_i$ are algebraic with $\mathbb{Q}$-coefficients, as shows the nonvanishing of a Vandermonde determinant (for adequate choices of integers $i_0,\ldots,\,i_{2g}$), which will appear in the denominator. If we want to analyze the situation with $\mathbb{Z}$-coefficients, (forgetting about the polarization which can be of very large degree and not bring any further information), we argue as follows.
Equations (\ref{eqgraphmult}) show that, in the sublattice $L$ of $H^{2g}(A\times A,\mathbb{Z})$ generated by the $\delta_k$,
the group of algebraic classes contains
$\sum_{k=0}^{2g} i^k\delta_k$ and $\sum_{k=0}^{2g} i^{2g-k}\delta_k$ for any $i$. It seems  possible that (if the polarization has sufficiently divisible degree)  no other combination of the $\delta_k$ is algebraic on $A\times A$. The subgroup $L'\subset L$ generated by $\sum_{k=0}^{2g} i^k\delta_k$ and $\sum_{k=0}^{2g} i^{2g-k}\delta_k$ for any $i$ is not the whole group generated by the $\delta_k$. Indeed, consider the dual lattice $L^*$. An element of $L^*$ is a combination
$$P=\sum_{k=0}^{2g} \alpha_k\delta_k^*$$
and we associate to $P$ the polynomial $P(x)=\sum_{k=0}^{2g} \alpha_kx^k$.
We now consider the group $M\cong (L')^*$ of elements $P\in L^*\otimes \mathbb{Q}$ that restrict to elements of $(L')^*\subset (L')^*\otimes \mathbb{Q}$. If a K\"{u}nneth projector $\delta_k$ satisfies $\mu\delta_k\in L'$, one has $$\mu\alpha_k\in\mathbb{Z}$$ for any $P=\sum_{k=0}^{2g} \alpha_k\delta_k^*\in M$, so we need to know what are the denominators of elements $P\in M$. Such an element $P$ satisfies by (\ref{eqgraphmult}) the conditions

\begin{eqnarray}\label{eqpoly1} \sum_{k=0}^{2g} \alpha_ki^k\in\mathbb{Z}\,\,\forall i\in \mathbb{N},\\
\nonumber
\sum_{k=0}^{2g}\alpha_k i^{2g-k} \,\,\forall i\in \mathbb{N}.
\end{eqnarray}
The corresponding polynomial $P(x)$ thus has the property that
\begin{eqnarray}\label{eqpoly2}P(i)\in \mathbb{Z}\,\,\forall i\in \mathbb{N},\, ^tP(i)\in \mathbb{Z}\,\,\forall i\in \mathbb{N},\end{eqnarray}
where $^tP$ is the reciprocal polynomial of $P$. Polynomials $P(x)=\sum_{k=0}^{2g} \alpha_kx^k$ with rational coefficients taking integral values on integers are well-known to be combinations with integral coefficients of binomial polynomials. We also have to take into account the reciprocal condition to compute the denominators of the elements in $M$. In dimension $2$, we get that $2\delta_i$ is algebraic for any $i$.

Recall from Definition \ref{fisplit} that an abelian variety $A$ is split if its first K\"{u}nneth projector $\delta_1$ on $H_1(A,\mathbb{Z})$ is algebraic.
This property is related to Theorem \ref{theomain} by  the following
\begin{prop} \label{ledejaprouve} Let $A$ be an abelian variety.

(i) Assume $A$ is principally polarized (or has a unimodular line bundle) and the minimal class $\gamma_{\rm min}$ is algebraic. Then $A$ is  split.

(ii) Assume $A$ is a direct summand in a product of Jacobians, then $A$ is split.

(iii)  Assume the integral Hodge conjecture holds for curve classes on abelian varieties. Then any abelian variety is split.
\end{prop}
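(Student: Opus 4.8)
The plan is to prove the three parts in the order (ii), (i), (iii), since (ii) is the structurally cleanest statement and (iii) will follow by combining it with the hypothesis and Corollary \ref{coromain}. For part (ii), suppose $A$ is a direct summand in a product of Jacobians $J=\prod_i J(C_i)$, so that we have maps $j:A\hookrightarrow J$ and a left inverse $\pi:J\to A$ with $\pi\circ j=\mathrm{id}_A$. The key observation is that each factor $J(C_i)$ is split in a canonical way: the universal divisor (the Poincaré class) on $J(C_i)\times C_i$, combined with the Abel--Jacobi theory that realizes $H_1(J(C_i),\mathbb{Z})\cong H^1(C_i,\mathbb{Z})$, produces an algebraic cycle on $J(C_i)\times J(C_i)$ whose action on homology is exactly the Künneth projector $\delta_1$. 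Concretely, for a curve $C$ of genus $h$, the cycle $\sum_{k}(\mathrm{pr}_1^*\theta^{k}\cdot\mathrm{pr}_2^*\theta^{h-1-k})$-type corrections to the Poincaré bundle, or more directly the class $\Theta$-induced correspondence, already has the desired effect; the point is simply that splitness is \emph{known} for Jacobians because $\delta_1$ on a Jacobian is represented by an explicit algebraic cycle built from the theta divisor and the diagonal. I would then check that splitness is stable under products (take the sum $\sum_i \mathrm{pr}_{ii}^*\Gamma_i$ of the splitting cycles on each factor, which acts as $\delta_1$ on $H_1(J,\mathbb{Z})=\bigoplus_i H_1(J(C_i),\mathbb{Z})$) and under passing to a direct summand: if $\Gamma_J\in\mathrm{CH}^{\dim J}(J\times J)$ realizes $\delta_1^J$, then $\Gamma_A:=(\pi\times\pi)_*\circ\Gamma_J\circ(j\times j)_*$, or more precisely the cycle $(j\times j)^*\Gamma_J$ pushed through $\pi$, realizes $\delta_1^A$ on $H_1(A,\mathbb{Z})$, using that $j_*,\pi_*$ are adjoint with respect to the splitting and that $\pi_*j_*=\mathrm{id}$ on $H_1$.

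For part (i), I would invoke Proposition \ref{theoequiv}: under the hypothesis that $A$ carries a unimodular line bundle and $\gamma_{\rm min}$ is algebraic, that proposition shows $A$ is a direct summand in a product of Jacobians, and then part (ii) applies directly. This makes (i) an immediate corollary of (ii) together with the already-established Proposition \ref{theoequiv}, so no new work is needed beyond citing these.

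For part (iii), the integral Hodge conjecture for curve classes on abelian varieties implies, via Corollary \ref{coromain}, that every abelian variety $A$ is a direct summand in a product of Jacobians; then part (ii) shows $A$ is split. Alternatively, and perhaps more in the spirit of the Definition, one can argue directly: the Künneth projector $\delta_1\in H^{2g}(A\times A,\mathbb{Z})$ is an integral Hodge class (as noted in the remark following Definition \ref{fisplit}), so if the integral Hodge conjecture holds on $A\times A$ it is algebraic, which is exactly the definition of $A$ being split. I would present this direct argument as the main line and mention the route through (ii) as an alternative.

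The main obstacle I anticipate is the cleanest formulation of the functoriality in part (ii) --- namely verifying that the splitting cycle $\Gamma_A$ obtained by transporting $\Gamma_J$ through $j$ and $\pi$ acts as precisely $\delta_1^A$ and not some other Künneth-type operator. The subtlety is that $(j\times j)_*$ and the pushforward $\pi$ interact with the \emph{full} Künneth decomposition, not just the degree-$1$ part, so I must be careful that the compositions with $\pi_*j_*=\mathrm{id}_{H_1}$ kill the unwanted contributions in other degrees. The correct device is to conjugate by the idempotent $j\circ\pi$ and use that on $H_1$ this is the identity while the constructed cycle already projects onto $H_1$; concretely, defining the cycle as the image of $\Gamma_J$ under the correspondence-composition with the graphs of $j$ and $\pi$, its homological action factors as $\delta_1^A=(\pi_*|_{H_1})\circ\delta_1^J\circ(j_*|_{H_1})$, and the outer maps are mutually inverse on $H_1$. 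Making this bookkeeping precise, while ensuring integrality is preserved (no denominators enter, since all operations are pushforward/pullback of genuine cycles), is the step that requires the most care.
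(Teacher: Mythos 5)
Your treatment of (i) and (ii) follows essentially the same route as the paper: Jacobians are split via the Poincar\'e divisor restricted to $J(C)\times C$ (the paper's Lemma \ref{exintro}), splitness passes to products and then to direct summands by transporting the splitting cycle through $j$ and $\pi$ (the paper takes $\Gamma'=(Id_A,\pi)_*(j,Id_J)^*\Gamma$, i.e.\ the composition of correspondences $\Gamma_\pi\circ\Gamma\circ\Gamma_j$, whose homological action is $\pi_*\circ[\Gamma]_*\circ j_*$ --- zero on $H_k$ for $k\neq 1$ and $\pi_*j_*=\mathrm{id}$ on $H_1$, so the bookkeeping you worry about is harmless); and (i) reduces to (ii) via Proposition \ref{theoequiv}. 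Your description of the splitting cycle on a single Jacobian is vaguer than it needs to be (no ``theta-divisor corrections'' are required; the restriction of the Poincar\'e divisor to $J(C)\times C$, viewed as a codimension-$g$ cycle on $J(C)\times J(C)$, already does the job), but the idea is the right one.

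There is, however, a genuine gap in what you propose as the \emph{main line} for (iii). The hypothesis of (iii) is the integral Hodge conjecture for \emph{curve classes}, i.e.\ for degree~$2$ integral Hodge homology classes, which on the $2g$-dimensional variety $A\times A$ means classes in $H^{4g-2}(A\times A,\mathbb{Z})$. The K\"unneth projector $\delta_1$ lives in $H^{2g}(A\times A,\mathbb{Z})$, a codimension-$g$ class; it is a curve class only when $g=1$. The remark after Definition \ref{fisplit} that you cite says the $\delta_i$ are algebraic if $A\times A$ satisfies the \emph{full} integral Hodge conjecture, which is a much stronger hypothesis than the one given in (iii). So the ``direct'' argument does not follow from the stated assumption and cannot serve as the main proof. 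Your other route --- Corollary \ref{coromain} gives that every abelian variety is a direct summand in a product of Jacobians, then (ii) applies --- is exactly the paper's argument and is the one you should present; the ``alternative'' should be dropped or restated under the full integral Hodge conjecture.
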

We will use the following
\begin{lemm}\label{exintro}  Let $A=J(C)$ be the Jacobian of a curve. Then $A$ is split.
\end{lemm}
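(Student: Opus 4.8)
The plan is to show directly that the first Künneth projector $\delta_1$ of $A=J(C)$ is represented by an algebraic cycle, which is exactly the splitness condition of Definition \ref{fisplit}. Two algebraicity inputs feed the construction. First, on a Jacobian the minimal class is algebraic: by the Poincaré formula $\gamma_{\rm min}=\frac{\theta^{g-1}}{(g-1)!}=[C]$, the class of the Abel--Jacobi image of $C$ in $J$, where $\theta=c_1(\mathcal{O}_J(\Theta))$. Second, writing $s\colon J\times J\to J$ for the addition map and $p_1,p_2$ for the two projections, the Mumford line bundle $\Lambda:=s^*\mathcal{O}_J(\Theta)\otimes p_1^*\mathcal{O}_J(-\Theta)\otimes p_2^*\mathcal{O}_J(-\Theta)$ has first Chern class $\ell:=s^*\theta-p_1^*\theta-p_2^*\theta$, an algebraic class which is exactly the $(1,1)$-Künneth component of $s^*\theta$.

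The cycle I would use is obtained by intersecting these two algebraic classes. Choose a divisor $D$ on $J\times J$ with $[D]=\ell$ (for instance $D=s^{-1}\Theta-p_1^{-1}\Theta-p_2^{-1}\Theta$) and set
\[
\Gamma:=D\cdot (J\times C)\in \mathrm{CH}^g(J\times J).
\]
Since $J\times C$ has codimension $g-1$ and class $p_2^*\gamma_{\rm min}$, while $D$ has codimension $1$, the cycle $\Gamma$ has codimension $g$ and $[\Gamma]=\ell\cup p_2^*\gamma_{\rm min}$. The claim is then that $[\Gamma]$ equals $\delta_1$ (up to a sign and a transposition that can be absorbed afterwards).

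To verify the cohomological identity I would reuse the integral symplectic basis from the proof of Lemma \ref{lecomputcoh}: a basis $e_1,\dots,e_{2g}$ of $H^1(A,\mathbb{Z})$ with $\theta=\sum_{i\le g}e_i\wedge e_{i+g}$ and $\int_A e_1\cdots e_{2g}=1$. Since $s^*e_i=e_i\otimes 1+1\otimes e_i$, the $(1,1)$-component gives $\ell=\sum_{i\le g}(e_i\otimes e_{i+g}-e_{i+g}\otimes e_i)$. The relations (\ref{eqsumgammamin}) say precisely that cup-product with $\gamma_{\rm min}$ carries $e_i\mapsto e_{i+g}^{\vee}$ and $e_{i+g}\mapsto -e_i^{\vee}$, where $\{e_k^{\vee}\}\subset H^{2g-1}(A,\mathbb{Z})$ is the Poincaré-dual basis of $\{e_k\}$. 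Applying this to the second factor yields
\[
\ell\cup p_2^*\gamma_{\rm min}=\sum_{i\le g}\bigl(-\,e_i\otimes e_i^{\vee}-e_{i+g}\otimes e_{i+g}^{\vee}\bigr)=-\sum_{k=1}^{2g}e_k\otimes e_k^{\vee},
\]
and the right-hand side is $\pm\delta_1$. Replacing $\Lambda$ by $\Lambda^{-1}$ (i.e. $D$ by $-D$) fixes the global sign so that $[\Gamma]=\delta_1$, whence $A$ is split.

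The only delicate point, and the step I expect to require the most care, is the \emph{integral} matching: one must confirm that $\ell\cup p_2^*\gamma_{\rm min}$ equals $\delta_1$ on the nose and not merely up to a rational factor, together with pinning down the sign and the choice of factor $p_2$ versus $p_1$. This integrality is exactly where the principal polarization enters, through the unimodularity $\int_A e_1\cdots e_{2g}=1$ and the fact that $\gamma_{\rm min}=[C]$ is the integral generator of the curve classes; both hold for a Jacobian with its canonical principal polarization, so no denominators appear. (I note that this lemma is naturally the base case for Proposition \ref{ledejaprouve}(ii), since splitness passes to products and to direct summands.)
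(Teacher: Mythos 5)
Your construction is the same as the paper's: the class $\ell$ of the Mumford bundle is exactly the class of the Poincar\'e divisor $\mathcal{P}$ on $A\times A$ (after identifying $\widehat{A}$ with $A$ via the principal polarization), and your cycle $D\cdot(J\times C)$ is the paper's $\mathcal{P}_C=\mathcal{P}\cdot(A\times C)$; the only difference is that you verify $[\Gamma]=\pm\delta_1$ by an explicit symplectic-basis computation, whereas the paper argues abstractly that $[\mathcal{P}]_*$ and $[A\times C]_*$ are mutually inverse Poincar\'e duality isomorphisms on degree-one (co)homology and act trivially elsewhere. Both verifications are correct, and your sign fix by replacing $D$ with $-D$ is legitimate.
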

\begin{proof} Indeed, let $j: C\hookrightarrow A$ be the canonical morphism determined by a $0$-cycle of degree $1$ on $C$.  Recalling that $A$ is isomorphic to its dual $\widehat{A}={\rm Pic}^0(A)$, denote by $\mathcal{P}$ the Poincar\'{e} divisor on $A\times A$. We consider the restriction of $\mathcal{P}$ to $A\times C$, that we denote by $\mathcal{P}_C\in{\rm CH}^1(A\times C)$. As $[\mathcal{P}]_*$ acts as the Poincar\'{e} duality isomorphism $H_1(A,\mathbb{Z})\cong H^1(A,\mathbb{Z})$ and trivially on other cohomology groups,   $[\mathcal{P}_C]_*$ induces  an isomorphism $H_1(A,\mathbb{Z})\cong H^1(C,\mathbb{Z})$, since the restriction map $j^*: H^1(A,\mathbb{Z})\rightarrow H^1(C,\mathbb{Z})$ is an isomorphism. Furthermore  $[\mathcal{P}_C]_*$  acts trivially on the other cohomology groups.  As $j_*: H_1(C,\mathbb{Z})\rightarrow H_1(A,\mathbb{Z})$ is an isomorphism, the class of the $g$-cycle $(Id_A,j)_*\mathcal{P}_C\in {\rm CH}_g(A\times A)$ acts as the identity of  $H_1(A,\mathbb{Z})$ and trivially on the other cohomology groups.
\end{proof}

\begin{proof}[Proof of Proposition \ref{ledejaprouve}]  In both cases (i) and (iii), it follows from Proposition \ref{theoequiv} and Corollary \ref{coromain} that $A$ is a direct summand in a product $J$ of Jacobians, so we only have to prove (ii). Let $g:={\rm dim}\,A$ and $g':={\rm dim}\,J$. By Lemma \ref{exintro}, $J$ is split, hence there is  a cycle $\Gamma\in {\rm CH}^{g'}(J\times J)$ such that
$[\Gamma]$ acts as the projector onto $H_1(J,\mathbb{Z})$. Let $j:A\rightarrow J$ be the inclusion and $\pi:J\rightarrow A$ be the projection. Then
if $$\Gamma':=(Id_A,\pi)_*(j,Id_J)^*\Gamma\in {\rm CH}^g(A\times A),$$
we have
$$[\Gamma']_*=\pi_*\circ j_*: H_*(A,\mathbb{Z})\rightarrow H_*(A,\mathbb{Z}),$$
hence $[\Gamma']$ acts as the projector onto $H_1(A,\mathbb{Z})$ and $A$ is split.
\end{proof}
\begin{rema}{\rm If $A$ is  a direct summand in a Jacobian, one can easily prove by the same arguments as above as above  that all K\"{u}nneth projectors $\delta_i:H_*(A,\mathbb{Z})\rightarrow H_i(A,\mathbb{Z})\hookrightarrow H_*(A,\mathbb{Z})$ are algebraic. It is not so clear however that this last property  holds if we only assume that $A$ is split.}
\end{rema}
\section{Existence of universal $0$-cycles \label{secisplit}}
\subsection{Universal $0$-cycle on rationally connected fibrations over abelian varieties \label{seceasyprop}}

 We give in this section  the proof of   Propositions \ref{leGHS} and \ref{procubiceasy}.
\begin{proof}[Proof of Proposition \ref{leGHS}] Let $\psi:P\rightarrow A$ be a fibration with rationally connected general fiber. Assume that $A$ is a direct summand in a direct sum $J=\prod_{i=1}^kJ(C_i)$ of Jacobians and denote respectively  by $j:A\rightarrow J$ and $\pi:J\rightarrow A$ the inclusion and the projection. We consider the fibered product $\psi_J:P_J:=J\times_A P\rightarrow J$ and observe that the Graber-Harris-Starr theorem \cite{GHS} applies to the restriction of $\psi_J$ over any curve passing through the  general  point of  $J$. Thus for the general translate of  any curve $C\subset J$, there is a lift $\sigma: C\rightarrow  P_J$ of $C$ in $P_J$, with graph $\Gamma_{\sigma}\in {\rm CH}^n(C\times P_J)$, where $n={\rm dim}\,P_J$. By assumption, $J=\prod_iJ(C_i)$ where $C_i$ is a smooth curve of genus $g_i$. We thus know that there is an embedding $C_i\subset J$ for each $i$ and a cycle $\Gamma_{\sigma_i}\in {\rm CH}^n(C\times P_J)$ as above.
 We can thus construct  a cycle
$$Z\in{\rm CH}^n(C_1^{(g_1)}\times\ldots \times C_k^{(g_k)}\times P_J)$$
defined as
$$Z=\sum_i{\rm pr}_i^*Z_i,\,$$ where $Z_i\in {\rm CH}^n(C_i^{(g_i)}\times P_J)$ is the cycle whose pull-back to $C^{g_i}\times P_J$ is the symmetric cycle $\sum_{j=1}^{g_i} p_j^*\Gamma_{\sigma_i}$ (the $p_j$ being the projections from $C_i^{g_i}$ to $C_i$). Furthermore, we have a birational map
 $\tau: \prod_iC_i^{(g_i)}\rightarrow J$ such that ${\rm alb}_{C_i}\circ {\rm pr}_i\circ \tau^{-1}$ is the projection from $J$ to $J(C_i)$ and we now set
$$Z':=(\tau,Id_P)_*Z\in {\rm CH}^n(J\times P).$$ For each $i$, the cycle $\Gamma_{\sigma_i}$ has the property that $$\Gamma_{\sigma_i*}:{\rm Alb}(C_i) \rightarrow {\rm Alb}(P_J)=J$$
is the inclusion of $J(C_i)$ in $J$, hence the cycle
$Z$
has the property that
$$ Z_*:{\rm Alb}(\prod_iC_i^{(g_i)})=J\rightarrow {\rm Alb}(P_J)=J$$
is the identity. It follows that the cycle $Z'$ satisfies as well  the property that
$$Z'_*:{\rm Alb}(J)=J\rightarrow {\rm Alb}(P_J)=J$$
is the identity so $Z'$ is a universal $0$-cycle for $P_J$.
Finally, let $\pi_P:P_J\rightarrow P$ be the natural projection, and let
$$Z'':=\pi_P\circ Z'\circ j\in {\rm CH}^m(A\times P),\,\,m={\rm dim}\,P.$$
It is clear that $Z''$ is a universal $0$-cycle for $P$.
\end{proof}
Using  Proposition \ref{theoequiv}, we deduce
\begin{coro} (Cf. \cite{voisinJAG}) Let  $\psi:P\rightarrow A$ be a fibration with rationally connected general fiber. If $A$ has a unimodular line bundle such that the minimal class $\gamma_{\rm min}$ is algebraic, $P$ admits a universal zero-cycle.
\end{coro}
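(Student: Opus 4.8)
The plan is to deduce this corollary by chaining together the two main results already established, namely Proposition \ref{theoequiv} and Proposition \ref{leGHS}. The hypothesis supplies a $g$-dimensional abelian variety $A$ equipped with a unimodular line bundle $L$ (so $L^g=\pm g!$) together with the algebraicity of the associated minimal class $\gamma_{\rm min}=\frac{c_1(L)^{g-1}}{(g-1)!}$, and a fibration $\psi:P\rightarrow A$ whose general fiber is rationally connected. The goal is a universal $0$-cycle on $P$, so the natural strategy is first to produce the structural statement that $A$ is a direct summand in a product of Jacobians, and then to feed this into the Graber--Harris--Starr based construction of Proposition \ref{leGHS}.

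First I would apply Proposition \ref{theoequiv} to the pair $(A,L)$. Although that proposition is phrased under the assumption that $A$ satisfies the full integral Hodge conjecture for curve classes, I would point out that its proof only invokes the algebraicity of the single class $\gamma_{\rm min}$: indeed, the proof begins by writing $\gamma_{\rm min}=\sum_i\epsilon_i j_{i*}[C_i]_{\rm fund}$ for curves $j_i:C_i\rightarrow A$, and all subsequent arguments (the construction of $\hat{j}:\widehat{A}\rightarrow\prod_i J(C_i)$, the cohomological computation of Lemma \ref{lecomputcoh}, and the unimodularity-driven orthogonal splitting) depend only on that one expression and on $L$ being unimodular. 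Hence the hypothesis of the corollary is exactly what the proof of Proposition \ref{theoequiv} requires, and I conclude that $A$ is a direct summand, as an abelian variety, in a product $J=\prod_i J(C_i)$ of Jacobians.

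With this in hand, the second and final step is a direct invocation of Proposition \ref{leGHS}: since $A$ is now known to be a direct summand in a product of Jacobians and $\psi:P\rightarrow A$ is a fibration with rationally connected general fiber, that proposition yields a universal $0$-cycle $Z''\in{\rm CH}^m(A\times P)$, with $m={\rm dim}\,P$, inducing the identity on ${\rm Alb}(P)=A$. This completes the argument.

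I do not expect any genuine obstacle here, as the corollary is a formal consequence of the two propositions; the only point demanding care is the remark, made in the first step, that the proof of Proposition \ref{theoequiv} genuinely goes through under the weaker hypothesis that $\gamma_{\rm min}$ alone is algebraic, rather than the full integral Hodge conjecture for curve classes. Once that observation is in place, the composition of Propositions \ref{theoequiv} and \ref{leGHS} is immediate.
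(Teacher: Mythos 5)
Your argument is correct and is precisely the paper's intended proof: the corollary is stated immediately after Proposition \ref{leGHS} with the phrase ``Using Proposition \ref{theoequiv}, we deduce,'' i.e.\ one first obtains that $A$ is a direct summand in a product of Jacobians and then invokes Proposition \ref{leGHS}. Your explicit remark that the proof of Proposition \ref{theoequiv} only uses the algebraicity of the single class $\gamma_{\rm min}$ (rather than the full integral Hodge conjecture for curve classes) is exactly the point the paper leaves implicit, and it is accurate.
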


We now turn to the case of the Iliev-Markushevich-Tikhomirov fibration \cite{ilievmarku}, \cite{markuti}. As in the introduction, $X$ is a smooth cubic threefold, and $\widetilde{H}^{5,1}$ is a smooth projective model of the Hilbert scheme $H^{5,1}$ of degree 5, genus 1, curves in $X$.
\begin{proof}[Proof of Proposition \ref{procubiceasy}] We have ${\rm Alb}(\widetilde{H}_{5,1})=J^3(X)=:J$ since $\widetilde{H}_{5,1}$ is fibered over $J^3(X)$ (via the Abel-Jacobi map) into rationally connected $5$-folds. Assume that $\widetilde{H}_{5,1}$ admits a universal $0$-cycle $\Gamma\in{\rm CH}^{10}(J\times \widetilde{H}_{5,1})$. The Zariski closure $\mathcal{E}\subset \widetilde{H}_{5,1}\times X$ of the universal elliptic curve of degree $5$
 gives a codimension $2$ cycle in $\widetilde{H}_{5,1}\times X$, which  induces
the  Iliev-Markushevich-Tikhomirov Abel-Jacobi isomorphism (see \cite{ilievmarku}, \cite{markuti})
$$\mathcal{E}_*:{\rm Alb}(\widetilde{H}_{5,1})\cong J^3(X)=J.$$
 Consider the composition
 $$\Gamma_X:=\mathcal{E}\circ \Gamma\in {\rm CH}^2(J\times X).$$
 Then
 $$\Gamma_{X*}=\mathcal{E}_*\circ \Gamma_*:{\rm Alb}(J)=J\rightarrow J^3(X)=J$$
 is the identity of $J$, hence $X$ has a universal codimension $2$ cycle.
In the other direction, suppose that $X$ has  a universal codimension $2$ cycle
$\Gamma_X\in{\rm CH}^2(J\times X)$. We claim that there exists a cycle
$\Gamma\in {\rm CH}^{10}(J\times \widetilde{H}_{5,1})$ such that
$$\Gamma_{X*}=\mathcal{E}_*\circ \Gamma_*:J\rightarrow J.$$
The claim immediately implies that $\Gamma_*$ is a universal $0$-cycle for $\widetilde{H}_{5,1}$ since
\begin{eqnarray}\label{eqisoajell} \mathcal{E}_*:{\rm Alb}(\widetilde{H}_{5,1})\rightarrow J^3(X)=J
\end{eqnarray}  is an isomorphism.
To prove the claim, we use the Shen universal generation theorem \cite{mshen}, which says the following. Denoting by  $\Sigma$ the surface of lines in $X$, and by $P_\Sigma\subset \Sigma\times X$ the universal family of lines, there exists a cycle
$\Gamma_\Sigma\in {\rm CH}^2(J\times \Sigma)$, such that
\begin{eqnarray}\label{eqmshen}  P_{\Sigma*}\circ \Gamma_{\Sigma*}=\Gamma_{X*}:J\rightarrow J.
\end{eqnarray}
Note that by \cite{CG},
\begin{eqnarray}\label{eqisoajdroite} P_{\Sigma*}:{\rm Alb}(\Sigma)\rightarrow J^3(X)=J
\end{eqnarray}
is also an isomorphism.
In order to construct $\Gamma$, we apply the following Lemma \ref{lemmadroiteell} proved below
\begin{lemm} \label{lemmadroiteell} There exists a correspondence $\Gamma_{1,5}\in{\rm CH}^{10}(\Sigma\times \widetilde{H}_{5,1})$ inducing an isomorphism
$\Gamma_{1,5*}:{\rm Alb}(\Sigma)\rightarrow {\rm Alb}(\widetilde{H}_{5,1})$ compatible with the Abel-Jacobi isomorphisms
(\ref{eqisoajell}) and (\ref{eqisoajdroite}).
\end{lemm}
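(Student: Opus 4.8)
The plan is to realize $\Gamma_{15}$ as the class of a rational section of a rationally connected fibration, exactly in the spirit of the proof of Proposition \ref{leGHS}. Write $a_\Sigma\colon \Sigma\to J$ for the Abel--Jacobi morphism of lines, inducing $P_{\Sigma*}$ of (\ref{eqisoajdroite}) on Albanese, and $a_H\colon \widetilde{H}_{5,1}\dashrightarrow J$ for the Abel--Jacobi map of elliptic quintics, inducing $\mathcal{E}_*$ of (\ref{eqisoajell}). Form the fibre product $W:=\Sigma\times_J\widetilde{H}_{5,1}$ with projections $p\colon W\to\Sigma$ and $q\colon W\to\widetilde{H}_{5,1}$. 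By the Iliev--Markushevich--Tikhomirov description, $a_H$ exhibits $\widetilde{H}_{5,1}$ over a dense open $U\subset J$ as a fibration into $\mathbb{P}^5$'s; since $a_\Sigma(\Sigma)$ is an irreducible surface meeting $U$, the projection $p$ is, over the dense open $a_\Sigma^{-1}(U)\subset\Sigma$, a fibration with rationally connected general fibre. The crucial structural fact is that on $W$ the two maps to $J$ coincide, $a_\Sigma\circ p=a_H\circ q$.

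Granting a rational section $s\colon\Sigma\dashrightarrow W$ of $p$, I would set $f:=q\circ s\colon\Sigma\dashrightarrow\widetilde{H}_{5,1}$, resolve its indeterminacies on the smooth projective model, and take $\Gamma_{15}$ to be the class of the closure of the graph of $f$ in ${\rm CH}^{10}(\Sigma\times\widetilde{H}_{5,1})$, a cycle of dimension $2=\dim\Sigma$. Then $\Gamma_{15*}=f_*$ on Albanese, and since $s$ lands in the fibre product one has $a_H\circ f=a_\Sigma$, whence $\mathcal{E}_*\circ\Gamma_{15*}=P_{\Sigma*}$. As $P_{\Sigma*}$ and $\mathcal{E}_*$ are isomorphisms, $\Gamma_{15*}=\mathcal{E}_*^{-1}\circ P_{\Sigma*}$ is an isomorphism compatible with (\ref{eqisoajell}) and (\ref{eqisoajdroite}). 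Thus, once the section is produced, the compatibility required by the lemma is automatic.

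The main obstacle is precisely the existence of the section of $p\colon W\to\Sigma$. The Graber--Harris--Starr theorem \cite{GHS} lifts any curve $C\subset\Sigma$ meeting $a_\Sigma^{-1}(U)$, but here the base is a \emph{surface}, and a rationally connected---indeed Brauer--Severi---fibration over a surface need not admit a rational section: a general degree-$N$ multisection only yields $\Gamma_{15*}=N\cdot(\mathcal{E}_*^{-1}\circ P_{\Sigma*})$, an isogeny rather than an isomorphism, the greatest common divisor of the possible degrees $N$ being the index of the associated Brauer class over $k(\Sigma)$. The real work is therefore to show that this index is $1$, i.e.\ that $W\to\Sigma$ admits an honest (degree-one) rational section over the Fano surface---note this may well fail over $J$ itself, which is consistent with Question \ref{questioncubic} being open. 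I would obtain it either from the explicit geometry of the Iliev--Markushevich--Tikhomirov correspondence \cite{ilievmarku}, \cite{markuti}, which attaches to a line a distinguished family of elliptic quintics of the correct Abel--Jacobi class and should furnish the section directly, or via a rational-simple-connectedness input of de Jong--Starr type for the $\mathbb{P}^5$-fibration over the surface $\Sigma$. Verifying one of these, together with the routine bookkeeping on the smooth projective model $\widetilde{H}_{5,1}$, is the crux of the argument.
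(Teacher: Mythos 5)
Your reduction is set up correctly, but you have left the entire content of the lemma unproven: everything hinges on the existence of a rational section of $W=\Sigma\times_J\widetilde{H}_{5,1}\to\Sigma$, you rightly observe that Graber--Harris--Starr does not give this over a surface base and that a multisection of degree $N>1$ only yields an isogeny, and you then merely list two strategies (``the explicit IMT geometry should furnish the section'' or ``a de Jong--Starr type input'') without carrying either out. That is precisely the crux, so the proposal has a genuine gap. Worse, the second route is doubtful as stated: for a Brauer--Severi fibration over a surface, de Jong's period--index theorem gives index $=$ period over $k(\Sigma)$, so a degree-one section exists only if the Brauer class already dies after pullback to $\Sigma$, which is not something you establish and is essentially equivalent to what you are trying to prove.

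The paper's proof avoids the section problem entirely by an explicit construction that does \emph{not} produce a section of $W\to\Sigma$. Fix a point $x\in X$ and a smooth plane cubic $E\ni x$. To a general line $\Delta\subset X$ one associates the residual conic $C$ of $\Delta$ in the plane $\langle\Delta,x\rangle$, and then the reducible degree-$5$ elliptic curve $C\cup E$, giving a rational map $\phi:\Sigma\dashrightarrow\widetilde{H}_{5,1}$. Since $\Delta+C=h^2$ and $E=h^2$ in ${\rm CH}^2(X)$, the curve $C\cup E$ is rationally equivalent to $2h^2-\Delta$, so $\phi$ covers $-a_\Sigma$ up to translation rather than $a_\Sigma$; taking $\Gamma_{15}$ to be \emph{minus} the graph of $\phi$ then gives $\mathcal{E}_*\circ\Gamma_{15*}=P_{\Sigma*}$. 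The lesson you are missing is that one does not need a section of the pulled-back $\mathbb{P}^5$-fibration in the Abel--Jacobi fiber over each line; any explicitly constructed family of elliptic quintics parameterized by $\Sigma$ whose Abel--Jacobi class is an invertible affine function of the line suffices, after correcting by the appropriate automorphism of $J$ at the level of cycles. If you want to salvage your approach, you must either produce the section honestly (and confront the Brauer obstruction over $k(\Sigma)$) or relax the requirement $a_H\circ f=a_\Sigma$ to $a_H\circ f=\psi\circ a_\Sigma$ for some automorphism-plus-translation $\psi$ realizable by a correspondence, which is exactly the paper's device.
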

Indeed, we set $\Gamma:=\Gamma_{1,5}\circ \Gamma_\Sigma\in {\rm CH}^{10}(J\times \widetilde{H}_{5,1})$ and it follows from (\ref{eqmshen}) that $\Gamma$ is a universal $0$-cycle for $\widetilde{H}_{5,1}$.
\end{proof}
\begin{proof}[Proof of Lemma \ref{lemmadroiteell}] We choose a point $x\in X$ and a smooth  plane cubic curve $E$ passing through $x$. Let $\Delta\subset X$ be a general line. Let $Q$ be the plane $\langle \Delta, x\rangle$. Then $Q\cap X$ is the union of $\Delta$ and a conic $C$ passing through $x$. The union $C\cup E$ is a (reducible) elliptic curve of degree $5$, which is parameterized by  a smooth point of $H^{5,1}$, hence by a point of $\widetilde{H}_{5,1}$. This construction gives a rational map $\phi:\Sigma\dashrightarrow \widetilde{H}_{5,1}$. Let $\Gamma_{1,5}\in{\rm CH}^{10}(\Sigma\times \widetilde{H}_{5,1})$ be minus the graph of $\phi$. For any $\Delta$ as above, the classes in $X$ of the curves $\Delta$, $C$ and $ E$ satisfy the relations
$$ \Delta+C=h^2\,\,{\rm in}\,\,{\rm CH}^2(X),\,  E=h^2\,\,{\rm in}\,\,{\rm CH}^2(X),$$
hence the curve $C\cup E$ is rationally equivalent to $2h^2-\Delta$ in $X$ and we have
$$\Phi_X\circ \Gamma_{1,5*}=-\Phi_X\circ\phi_*=P_*:{\rm Alb}(\Sigma)\rightarrow J^3(X).$$
\end{proof}
\subsection{Hodge classes and  cycles classes on   Brauer-Severi varieties\label{secunibra}}
 Our goal is to establish  Theorem \ref{theobrauer0univ} concerning the existence of a universal $0$-cycle for  the total space $P\rightarrow A$ of a Brauer-Severi variety over an abelian variety $A$. We first start with the following easy result concerning the Hodge classes on the total space of a Brauer-Severi variety $p: P\rightarrow B$ of relative dimension $d$, where we assume that $B$ is smooth  projective and  $H^3(B,\mathbb{Z})$ has no torsion. In this case, the Brauer class $\alpha_P$ belongs to the $(d+1)$-torsion of  the group $$H^2(B,\mathcal{O}_B)/H^2(B,\mathbb{Z})\hookrightarrow  H^2(B,\mathcal{O}_{B}^*),$$
 where the sheaf $\mathcal{O}_{B}^*$  is the sheaf of invertible holomorphic functions on $B$ equipped with the Euclidean topology, and the inclusion above  is induced by the exponential exact sequence. The class $\alpha_P$  can be  constructed as follows. The   Brauer class measures the obstruction to the existence of an algebraic (or equivalently  holomorphic since  $B$ is projective) line bundle $L$ on $P$, whose restriction to the fibers $P_x\cong \mathbb{P}^d$ is the generator $\mathcal{O}(1)$. As there is no torsion in $H^3(B,\mathbb{Z})$, the Leray spectral sequence of $p$ shows that a topological such line bundle $H$ exists on $P$, and $c_1(H)\in H^2(P,\mathbb{Z})$ is well defined modulo $p^*H^2(B,\mathbb{Z})$.
We can choose $H$ to be holomorphic if we can arrange that $c_1(H)$ vanishes in $H^2(P,\mathcal{O}_P)$.
 The class $\alpha_P$ is defined as the image of $c_1(H)$ in $$H^2(P,\mathcal{O}_P)/p^*H^2(B,\mathbb{Z})\cong H^2(B,\mathcal{O}_B)/H^2(B,\mathbb{Z}).$$
This class is of $(d+1)$-torsion because $P$ carries a holomorphic line bundle whose restriction to the fibers $P_x\cong \mathbb{P}^d$ is the line bundle  $\mathcal{O}(d+1)$, namely the relative anticanonical bundle.
\begin{lemm} \label{lehdgbrauer} Let $\gamma$ be an integral  Hodge class of degree $2k$ on $B$. Then, if there exists an integral Hodge  class
$\tilde{\gamma}\in{\rm Hdg}^{2k+2d}(P,\mathbb{Z})$ such that $p_*\tilde{\gamma}=\gamma$, one has
\begin{eqnarray}\label{eqvanishingbra} \gamma\cup \alpha_P=0\,\,{\rm in}\,\,H^{2k+2}(B,\mathbb{C})/(F^{k+1}H^{2k+2}(B,\mathbb{C})+H^{2k+2}(B,\mathbb{Z})).\end{eqnarray}
\end{lemm}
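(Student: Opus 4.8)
The plan is to compute $\gamma\cup\alpha_P$ by lifting $\alpha_P$ to the Chern class $\xi:=c_1(H)\in H^2(P,\mathbb{Z})$ of the topological line bundle $H$ introduced above, and pushing a single well-chosen product down to $B$. Since $p_*\tilde\gamma=\gamma$ forces $\gamma$ to live on $B$ (a class of type $(k,k)$), I would work throughout on $B$, integrating out the fiber direction via the Gysin map $p_*$, and exploit that $p_*$ is a morphism of Hodge structures of bidegree $(-d,-d)$ while carrying integral classes to integral classes.

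First I would record the Hodge-theoretic meaning of $\alpha_P$. Because the fibres are $\mathbb{P}^d$ one has $R^qp_*\mathcal{O}_P=0$ for $q>0$, so $p^*\colon H^2(B,\mathcal{O}_B)\xrightarrow{\ \sim\ }H^2(P,\mathcal{O}_P)=H^{0,2}(P)$. By the very definition recalled before the lemma, $\alpha_P$ is the image of $\xi$ in $H^2(P,\mathcal{O}_P)/p^*H^2(B,\mathbb{Z})$; writing $\xi_{\mathbb{C}}=\xi^{2,0}+\xi^{1,1}+\xi^{0,2}$ for the Hodge decomposition, this means $\xi^{0,2}=p^*a$ for some $a\in H^{0,2}(B)$ representing $\alpha_P$. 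As $\xi$ is real, $\xi^{2,0}=\overline{\xi^{0,2}}=p^*\bar a$ with $\bar a\in H^{2,0}(B)$, so that $\xi=\eta+p^*(a+\bar a)$ with $\eta:=\xi^{1,1}$ of type $(1,1)$.

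The heart of the argument is then to evaluate the \emph{integral} class $w:=p_*(\tilde\gamma\cup\xi)\in H^{2k+2}(B,\mathbb{Z})$, integral because $\tilde\gamma$ and $\xi$ are integral and $p_*$ preserves integral cohomology. By the projection formula together with $p_*\tilde\gamma=\gamma$,
\[
 w=p_*(\tilde\gamma\cup\eta)+\gamma\cup a+\gamma\cup\bar a .
\]
Now I would run the Hodge-type bookkeeping: $\tilde\gamma$ has type $(k+d,k+d)$, so $p_*(\tilde\gamma\cup\eta)$ has type $(k+1,k+1)$ and lies in $F^{k+1}$; likewise $\gamma\cup\bar a$ has type $(k+2,k)$ and lies in $F^{k+1}$; only $\gamma\cup a$, of type $(k,k+2)$, escapes $F^{k+1}$. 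Hence $\gamma\cup a\equiv w \pmod{F^{k+1}H^{2k+2}(B,\mathbb{C})}$ with $w$ integral, which is exactly the vanishing of $\gamma\cup\alpha_P$ in $H^{2k+2}(B,\mathbb{C})/(F^{k+1}H^{2k+2}(B,\mathbb{C})+H^{2k+2}(B,\mathbb{Z}))$.

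The points I would verify last are bookkeeping rather than conceptual: that the conclusion is insensitive to the $p^*H^2(B,\mathbb{Z})$-ambiguity of $\xi$ (replacing $\xi$ by $\xi+p^*u$ changes $a$ by $u^{0,2}$, hence $\gamma\cup a$ by $\gamma\cup u^{0,2}$, which differs from the integral class $\gamma\cup u$ by $\gamma\cup u^{2,0}+\gamma\cup u^{1,1}\in F^{k+1}$, and so vanishes in the quotient), and that $\gamma\cup\alpha_P$ is indeed represented by $\gamma\cup a$ under $H^2(B,\mathcal{O}_B)=H^2(B,\mathbb{C})/F^1H^2(B,\mathbb{C})$. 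The one genuinely load-bearing step — and thus the main obstacle — is arranging the Hodge types so that every term of $w$ except $\gamma\cup a$ lands in $F^{k+1}$; it is precisely the bidegree $(-d,-d)$ of $p_*$ that makes the single integral class $w$ detect $\gamma\cup\alpha_P$ modulo $F^{k+1}$.
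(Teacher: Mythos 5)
Your proof is correct and follows essentially the same route as the paper: both arguments push forward $\tilde\gamma$ cupped with an integral lift of $p^*\alpha_P$ (your $\xi=c_1(H)$ is the paper's $\eta$, your $a$ is the paper's lift $\tilde\alpha_P\in H^2(B,\mathcal{O}_B)$) and observe that the resulting integral class on $B$ is congruent to $\gamma\cup\alpha_P$ modulo $F^{k+1}$. Your explicit Hodge-type bookkeeping and the check of independence from the ambiguity of $\xi$ are just a more detailed rendering of the paper's equation $p^*\tilde\alpha_P=\eta$ followed by the projection formula.
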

In (\ref{eqvanishingbra}),  $F^\cdot$ denotes the Hodge filtration on the Betti cohomology of $B$ with complex coefficients and
 the cup-product $\gamma\cup \alpha_P$ is defined as follows. An integral Hodge class $\gamma$ of degree $2k$ on a smooth projective variety $Y$ can be seen as a pair
$(\gamma_\mathbb{Z},\gamma_F)$, with $\gamma_{\mathbb{Z}}\in H^{2k}(Y,\mathbb{Z})$, $\gamma_F\in F^kH^{2k}(Y,\mathbb{C})$ such that
\begin{eqnarray}\label{eqhodge} \gamma_{\mathbb{C}}=\gamma_F\,\,{\rm in}\,\,H^{2k}(Y,\mathbb{C}).\end{eqnarray}
Given such a Hodge class $\gamma$ on $Y$ and a Brauer class $$\alpha\in H^2(Y,\mathcal{O}_Y)/H^2(Y,\mathbb{Z})=H^2(Y,\mathbb{C})/(F^1H^2(Y,\mathbb{C})+H^2(Y,\mathbb{Z}))$$ with lift $\tilde{\alpha}\in H^2(Y,\mathcal{O}_Y)=H^2(Y,\mathbb{C})/F^1H^2(Y,\mathbb{C})$, we define
\begin{eqnarray}\label{eqcupbra} \gamma\cup \alpha:=\gamma_F\cup \tilde{\alpha}\in H^{2k+2}(Y,\mathbb{C})/F^{k+1}H^{2k+2}(Y,\mathbb{C})\,\,{\rm mod}\,\,H^{2k+2}(Y,\mathbb{Z}).\end{eqnarray}

\begin{proof}[Proof of Lemma \ref{lehdgbrauer}] We choose as before a lift
$\tilde{\alpha}_P$ of $\alpha_P$ in $H^2(Y,\mathcal{O}_Y)$.  By construction, the pull-back  $p^*\alpha_P$ vanishes in $H^2(P,\mathcal{O}_P)/H^2(P,\mathbb{Z})$. We thus have
\begin{eqnarray}\label{eq23novpourbr} p^*\tilde{\alpha}_P=\eta\,\,{\rm in}\,\, H^2(P,\mathbb{C}),\end{eqnarray}
where $\eta\in H^2(P,\mathbb{Z})$.
If $\tilde{\gamma}\in {\rm Hdg}^{2k+2d}(P,\mathbb{Z})$, with de Rham component $\tilde{\gamma}_F\in F^{k+d}H^{2k+2d}(P,\mathbb{C})$ and integral component $\tilde{\gamma}_\mathbb{Z}\in H^{2k+2d}(P,\mathbb{Z})$, it follows from (\ref{eqhodge}) and (\ref{eq23novpourbr}) that
\begin{eqnarray}\label{eq23novpourbrproof} \tilde{\gamma}_F\cup p^*\tilde{\alpha}_P =\tilde{\gamma}_\mathbb{Z}\cup \eta\,\,{\rm in}\,\,H^{2k+2+2d}(P,\mathbb{C})/F^{k+d+1}H^{2k+2+2d}(P,\mathbb{C}).
\end{eqnarray}
As the right hand side is an integral cohomology class (modulo torsion) on $P$, we conclude by push-forward to $B$ that
$p_*\tilde{\gamma}_{F}\cup \tilde{\alpha}_P=p_*(\tilde{\gamma}_{F}\cup p^*\tilde{\alpha}_P)$ is an integral cohomology class on $B$, modulo $F^{k+1}H^{2k+2}(B,\mathbb{C})$. Using the description (\ref{eqcupbra}) of the cup-product, we conclude that the Hodge class $\gamma=p_*\tilde{\gamma}$ satisfies
(\ref{eqvanishingbra}).
\end{proof}

\begin{proof}[Proof of Theorem \ref{theobrauer0univ}] Let $A$ be an abelian variety of dimension $g$ which is Mumford-Tate general with $\rho(A)=1$. The N\'{e}ron-Severi group ${\rm NS}(A)$ is $1$-dimensional, generated by the class $c_1(L)$ for some ample line bundle $L$ on $A$. We recall the notation $\gamma_{\rm min}$ for the generator of the cyclic group ${\rm Hdg}^{2g-2}(X,\mathbb{Z})$. Our  assumptions are that

(i)  the intersection number $\gamma_{\rm min}\cdot c_1(L)$ is even and

(ii) any Brauer-Severi variety $p:P\rightarrow A$ has a universal $0$-cycle.

We will choose a Brauer class $$\beta\in {\rm Tors}(H^2(A,\mathcal{O}_A)/H^2(A,\mathbb{Z}))\cong H^2(A,\mathbb{Z})_{\rm tr}\otimes \mathbb{Q}/\mathbb{Z}$$
with arbitrary divisible order,  where
$H^2(A,\mathbb{Z})_{\rm tr}:=H^2(A,\mathbb{Z})/{\rm NS}(A)$. Let $p:P_\beta\rightarrow A$ be a Brauer-Severi variety on $A$, of Brauer class $\beta$. Let $d_\beta$ be the  dimension of $P_\beta$.
 As, by assumption (ii), $P_\beta$ has a universal zero-cycle for $P_\beta$, there  is a cycle $Z_\beta$ in ${\rm CH}^{d_\beta}(A\times P_\beta)$, such that the class $[W_\beta]:=(Id,p)_*[Z_\beta]\in {\rm Hdg}^{2g}(A\times A,\mathbb{Z})$ has $(1,1)$-K\"{u}nneth component equal to $\delta_1$, that is, acts as the identity on $H_1(A,\mathbb{Z})$.

We will prove the following
\begin{prop}\label{lecomputclassbrauer} Assume $\beta $ comes from a general class in $H^2(A,\mathbb{Q})_{\rm tr}$ with arbitrarily divisible denominator. Then the Hodge class $[W_\beta]$ on $A\times A$ satisfies
\begin{eqnarray}\label{eqpourWbeta} [W_\beta]=\alpha_0\delta_0+\delta_1+\alpha_2 c_1(L)\otimes \gamma_{\rm min}+ \gamma_\beta\,\,{\rm in}\,\,{\rm Hdg}^{2g}(A\times A,\mathbb{Z}),
\end{eqnarray}
where $\alpha_0$ and $\alpha_2$ are integral and $\gamma_\beta\in {\rm Hdg}^{2g}(A\times A,\mathbb{Z})$ is arbitrarily divisible.
\end{prop}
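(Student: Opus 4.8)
The plan is to decompose $[W_\beta]$ into its Künneth bidegree components $[W_\beta]^{(p,2g-p)}\in H^p(A,\mathbb{Z})\otimes H^{2g-p}(A,\mathbb{Z})$ — an integral splitting, since $H^*(A,\mathbb{Z})$ is torsion-free — and to pin each one down using two inputs. Because $A$ is Mumford-Tate general with $\rho(A)=1$, the group $\mathrm{Hdg}^{2g}(A\times A,\mathbb{Q})$ consists of the $\mathrm{Sp}(H^1(A,\mathbb{Q}))$-invariants, which by classical invariant theory for the symplectic group are spanned by the monomials $\theta_1^a\mu^b\theta_2^c$ with $a+b+c=g$ in the three degree-two generators $\theta_1=\mathrm{pr}_1^*c_1(L)$, $\theta_2=\mathrm{pr}_2^*c_1(L)$ and the cross class $\mu\in H^1(A)\otimes H^1(A)$ dual to the polarization form. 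In low bidegree this gives $\mathbb{Q}\,\delta_0$ for $p=0$, $\mathbb{Q}\,\delta_1$ for $p=1$ (with $\delta_1$ proportional to $\mu\theta_2^{g-1}$), and the two-dimensional space $\langle c_1(L)\otimes\gamma_{\rm min},\ \mu^2\theta_2^{g-2}\rangle$ for $p=2$; these carry the only components appearing with bounded coefficients in (\ref{eqpourWbeta}).

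The first input is the universal $0$-cycle hypothesis: by construction $[W_\beta]$ acts as the identity on $H_1(A,\mathbb{Z})$, so its degree-one Künneth component is exactly $\delta_1$. The second and decisive input is Lemma \ref{lehdgbrauer}, applied to the Brauer-Severi variety $\mathrm{Id}_A\times p\colon A\times P_\beta\to A\times A$, whose base $A\times A$ has torsion-free $H^3$ and whose Brauer class is $\mathrm{pr}_2^*\beta$. Since $[W_\beta]=(\mathrm{Id}_A\times p)_*[Z_\beta]$ is the pushforward of the algebraic, hence Hodge, class $[Z_\beta]$, the lemma gives $[W_\beta]\cup\mathrm{pr}_2^*\beta=0$ in $H^{2g+2}(A\times A,\mathbb{C})/(F^{g+1}+H^{2g+2}(A\times A,\mathbb{Z}))$. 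I would then note that, $[W_\beta]$ being of type $(g,g)$ and a lift $\tilde\beta$ of $\beta$ lying in $H^2(A,\mathcal{O}_A)=H^{0,2}(A)$, the product $[W_\beta]\cup\mathrm{pr}_2^*\tilde\beta$ is purely of type $(g,g+2)$; hence the vanishing is equivalent to $[W_\beta]\cup\mathrm{pr}_2^*\tilde\beta\in\mathrm{pr}^{g,g+2}H^{2g+2}(A\times A,\mathbb{Z})$ (writing $\mathrm{pr}^{g,g+2}$ for the projection onto the $(g,g+2)$-part), a condition that splits over Künneth bidegrees.

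Analysing this bidegree by bidegree yields the formula. For $p=0,1$ the product $[W_\beta]^{(p,2g-p)}\cup\mathrm{pr}_2^*\tilde\beta$ lands in $H^{\geq 2g+1}(A)=0$ on the second factor, so these components are unconstrained by $\beta$: the $\delta_0$-direction contributes $\alpha_0\delta_0$ with $\alpha_0\in\mathbb{Z}$, and $\delta_1$ is already fixed. For $p=2$ the distinguished class $c_1(L)\otimes\gamma_{\rm min}$ also cups to zero, because $\gamma_{\rm min}\cup\tilde\beta$ lies in $H^{g-1,g+1}(A)=0$; its coefficient $\alpha_2$ is therefore unconstrained and integral, $c_1(L)\otimes\gamma_{\rm min}$ being an integral class. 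For the remaining directions — the $\mu^2\theta_2^{g-2}$ part when $p=2$, and all of $[W_\beta]^{(p,2g-p)}$ for $p\geq 3$ — writing $\tilde\beta=\tfrac1N\bar\beta^{0,2}$ with $\bar\beta\in H^2(A,\mathbb{Z})_{\rm tr}$ of denominator $N$, the condition reads $\mathrm{pr}^{g,g+2}\bigl([W_\beta]^{(p,2g-p)}\cup\mathrm{pr}_2^*\bar\beta\bigr)\in N\cdot\mathrm{pr}^{g,g+2}\bigl(H^p(A,\mathbb{Z})\otimes H^{2g-p+2}(A,\mathbb{Z})\bigr)$. Provided cup product with $\bar\beta^{0,2}$ is injective on the relevant Hodge lattice (modulo the kernel direction $c_1(L)\otimes\gamma_{\rm min}$ when $p=2$), this forces those components to be divisible by $N$ up to bounded index, so they are absorbed into an arbitrarily divisible class $\gamma_\beta$ as $N\to\infty$, giving (\ref{eqpourWbeta}).

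The main obstacle is precisely this injectivity: one must show that for general transcendental $\bar\beta$, cup product with $\bar\beta^{0,2}\in H^{0,2}(A)$ annihilates no nonzero Hodge class in bidegree $(p,2g-p)$ for $p\geq 3$, and in bidegree $2$ has kernel exactly $\mathbb{Q}\,c_1(L)\otimes\gamma_{\rm min}$. This is where Mumford-Tate genericity is essential and cannot be bypassed: the Hodge classes are $\mathrm{Sp}$-invariant monomials in $\theta_1,\theta_2,\mu$, whereas $\bar\beta^{0,2}$ is a general non-invariant vector of $\wedge^2H^1(A,\mathbb{C})$, so the claim concerns a non-equivariant operator. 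I would establish it by an explicit computation in a symplectic basis $e_1,\dots,e_{2g}$ as in the proof of Lemma \ref{lecomputcoh}, expressing $\mu$ and $\bar\beta^{0,2}$ in coordinates and reducing the nonvanishing of $(\theta_1^a\mu^b\theta_2^c)\cup\mathrm{pr}_2^*\bar\beta^{0,2}$ to the nonvanishing of explicit combinatorial coefficients; alternatively, one can organize the second factor by its hard-Lefschetz $\mathrm{sl}_2$-decomposition and track how multiplication by the transcendental $(0,2)$-class moves the primitive pieces. The bounded index factors coming from the lattices $\mathrm{pr}^{g,g+2}\bigl(H^p(A,\mathbb{Z})\otimes H^{2g-p+2}(A,\mathbb{Z})\bigr)$ require only routine care, as they are independent of $N$ and disappear in the limit $N\to\infty$.
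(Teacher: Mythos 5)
Your overall architecture coincides with the paper's: apply Lemma \ref{lehdgbrauer} to the Brauer--Severi variety $A\times P_\beta\rightarrow A\times A$, note that $\delta_0$, $\delta_1$ and $c_1(L)\otimes\gamma_{\rm min}$ are annihilated by $\overline{{\rm pr}_2^*\tilde{\beta}\cup}$ for degree and Hodge-type reasons, and conclude that the remaining part of $[W_\beta]$ must be divisible by (a bounded fraction of) the denominator $N$. Your description of ${\rm Hdg}^{2g}(A\times A,\mathbb{Q})$ as the span of the ${\rm Sp}$-invariant monomials $\theta_1^a\mu^b\theta_2^c$ is an equivalent packaging of the paper's description in terms of the Lefschetz projectors $\pi_{i,j}$, and your bidegree-by-bidegree bookkeeping together with the ``bounded index'' remark is exactly what the paper's elementary lattice Lemma \ref{lelatticeelem} formalizes. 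Up to that point the two proofs are the same.

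The genuine gap is precisely where you flag it. The injectivity of cup product with a general transcendental $(0,2)$-class on the Hodge lattice modulo $\langle\delta_0,\delta_1,c_1(L)\otimes\gamma_{\rm min}\rangle$ is the entire mathematical content of the proposition --- it is the paper's Lemma \ref{leeqmubeta} --- and you only name two candidate strategies without carrying either out. The paper's actual argument runs as follows: since the Mumford--Tate group is the full symplectic group, each Lefschetz piece $l^j\cup H^{i-2j}(A,\mathbb{Q})_{\rm prim}$ is a simple Hodge structure and there are no nonzero morphisms between distinct pieces, so every Hodge class in ${\rm End}(H^i(A,\mathbb{Q}))$ is a combination of the $\pi_{i,j}$; any combination avoiding the three distinguished classes has as image one of finitely many sub-Hodge structures, each with a nonzero $(p,q)$-component for some $q\leq g-2$; for a generic $\eta\in H^2(A,\mathcal{O}_A)$ the map $\eta\cup:H^{p,q}(A)\rightarrow H^{p,q+2}(A)$ is nonzero on each such component, and Zariski density of the image of $H^2(A,\mathbb{Q})$ in $H^2(A,\mathcal{O}_A)$ transfers this to a general rational $\tilde{\beta}$. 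If you instead pursue your explicit symplectic-basis computation, you must prove injectivity on the full span of the invariant monomials in each bidegree $p\geq 3$ (there are several independent monomials per bidegree, so nonvanishing on each monomial separately does not suffice), and you must still justify passing from a generic element of $H^{0,2}(A)$ to the $(0,2)$-projection of a general rational class. Neither step is routine enough to leave as an announced obstacle; as written the proposal establishes the easy containment of the three classes in the kernel but not the reverse inclusion on which the divisibility of $\gamma_\beta$ rests.
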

Assuming the proposition, the proof of Theorem \ref{theobrauer0univ} is concluded as follows. As $A$ is very general, the rational Hodge conjecture is known for $A\times A$ (see the proof of Lemma \ref{leeqmubeta}). It follows that any integral Hodge class which is  sufficiently divisible is algebraic on $A\times A$ and in particular the Hodge classes $\gamma_\beta$ of (\ref{eqpourWbeta}) with arbitrarily high divisibility are algebraic on $A\times A$. The class $\alpha_0\delta_0=\alpha_0[A\times {\rm pt}]$ is algebraic, and
$[W_\beta]=[(Id,p)_*Z_\beta]$ is also algebraic, so we conclude from (\ref{eqpourWbeta}) that the class
\begin{eqnarray}\label{eqZdelta} \delta_1+ \alpha_2 c_1(L)\otimes \gamma_{\rm min}\end{eqnarray}
is algebraic.

It remains to prove that  this implies  that $\delta_1$ is algebraic on $A\times A$. If we let act on $A\times A$ the endomorphism
$\mu'_-:=(Id_A,\mu_-)$, where $\mu_-$ is the multiplication by $-1$ on $A$, we deduce from the algebraicity of (\ref{eqZdelta}) that the class
$-\delta_1+ \alpha_2 c_1(L)\otimes \gamma_{\rm min}$ is also algebraic on $A\times A$, so that $2\alpha_2 c_1(L)\otimes \gamma_{\rm min}$ is algebraic on $A\times A$. If we now take the square (in the sense of the composition of correspondences) of
(\ref{eqZdelta}), we find that the class
$$\delta_1+\alpha_2^2 (c_1(L)\cdot\gamma_{\rm min})c_1(L)\otimes \gamma_{\rm min}$$
is also algebraic. By assumption (i), $c_1(L)\cdot\gamma_{\rm min}$ is even, so we conclude that the class $\alpha_2^2 (c_1(L)\cdot\gamma_{\rm min})c_1(L)\otimes \gamma_{\rm min}$ is algebraic, and finally $\delta_1$ is algebraic, so $A$ is split.
\end{proof}
For the proof of Proposition \ref{lecomputclassbrauer}, we will use the following Lemma \ref{leeqmubeta}. Hodge classes of degree $2g$ on $A\times A$ decompose according to the K\"unneth decomposition
$$\gamma=\sum_i\gamma_i,$$
where $\gamma_i=\gamma\circ \delta_i\in H^i(A,\mathbb{Z})\otimes H^{2g-i}(A,\mathbb{Z})$. For any class $\tilde{\beta}\in H^2(A,\mathbb{Q})$, the cup-product by  ${\rm pr}_2^*\tilde{\beta}$  preserves the K\"unneth  decomposition and induces a morphism
\begin{eqnarray}\label{eqmubeta} \overline{{\rm pr}_2^*\tilde{\beta}\cup} : {\rm Hdg}^{2g}(A\times A,\mathbb{Q})\rightarrow H^{2g+2}(A\times A,\mathbb{Q})/{\rm Hdg}^{2g+2}(A\times A,\mathbb{Q}).
\end{eqnarray}

\begin{lemm} \label{leeqmubeta} For a Mumford-Tate  general abelian variety $A$ with Picard number $1$ and  polarizing class $l$, and a generic class $\tilde{\beta}\in H^2(A,\mathbb{Q})$, the morphism
$ \overline{\tilde{\beta}\cup}$ of (\ref{eqmubeta}) has for kernel the $\mathbb{Q}$-vector subspace generated by
\begin{eqnarray}\label{eqthrreclasses} \delta_0\in H^{0}(A,\mathbb{Q})\otimes H^{2g}(A,\mathbb{Q}),\,\delta_1\in H^{1}(A,\mathbb{Q})\otimes H^{2g-1}(A,\mathbb{Q}),\\
\nonumber l\otimes\gamma_{\rm min} \in H^{2}(A,\mathbb{Q})\otimes H^{2g-2}(A,\mathbb{Q}).\end{eqnarray}
\end{lemm}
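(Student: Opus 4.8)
The plan is to use the Mumford--Tate genericity to replace ``Hodge class'' by ``$\mathrm{Sp}$-invariant'' and then to run the whole computation one K\"unneth component at a time inside the exterior algebra of $V:=H^1(A,\mathbb{Q})$. Write $2g=\dim_{\mathbb Q}V$, let $l\in\wedge^2V=H^2(A,\mathbb{Q})$ be the polarizing class, and let $\wedge^2V=\mathbb{Q}l\oplus P^2$ be the Lefschetz decomposition, where $P^2$ is the primitive part, an irreducible and nontrivial ${\rm Sp}(V)$-representation. Since $A$ is Mumford--Tate general, its Mumford--Tate group is ${\rm GSp}(V)$, so a class in $H^k(A\times A,\mathbb{Q})$ is Hodge if and only if it is ${\rm GSp}(V)$-invariant; for weight reasons this is the same as being ${\rm Sp}(V)$-equivariant.

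First I would describe the two sides. Cup-product with ${\rm pr}_2^*\tilde\beta=1\otimes\tilde\beta$ preserves the first K\"unneth index, so $\overline{\tilde\beta\cup}$ is block-diagonal and its kernel is the direct sum over $i$ of kernels $\ker_i$ on the pieces $H^i(A,\mathbb{Q})\otimes H^{2g-i}(A,\mathbb{Q})$. Using Poincar\'e duality on the second factor I would identify $H^i(A)\otimes H^{2g-i}(A)\cong {\rm End}(\wedge^iV)(-g)$; a routine duality computation then shows that the Hodge (that is, ${\rm Sp}$-equivariant) classes here are exactly ${\rm End}_{\rm Sp}(\wedge^iV)$, and that cupping the second factor with $\tilde\beta$ corresponds to $\phi\mapsto\phi\circ L_{\tilde\beta}$, where $L_{\tilde\beta}\colon\wedge^{i-2}V\to\wedge^iV$ is the operator of wedging with $\tilde\beta$.

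The key reduction is the following. Write $\tilde\beta=cl+\beta_0$ with $\beta_0\in P^2$. Since $L_l$ is ${\rm Sp}$-equivariant, $\phi\circ(cL_l)$ is automatically equivariant, so $\gamma_i\in\ker_i$ if and only if $\phi\circ L_{\beta_0}$ is equivariant. I would then consider the ${\rm Sp}$-equivariant map $T_\phi\colon P^2\to {\rm Hom}(\wedge^{i-2}V,\wedge^iV)$, $\beta_0\mapsto\phi\circ L_{\beta_0}$. If $\phi$ does not annihilate $P^2\wedge\wedge^{i-2}V$, then $T_\phi\neq0$, hence injective (as $P^2$ is irreducible), with image a copy of the nontrivial irreducible $P^2$; this image meets the trivial subrepresentation $I$ of invariant homomorphisms in $0$, so $T_\phi(\beta_0)=\phi\circ L_{\beta_0}\notin I$ for every $\beta_0\neq0$. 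Thus, for every $\tilde\beta\notin\mathbb{Q}l$ (in particular for generic $\tilde\beta$), one gets $\ker_i=\{\phi\in {\rm End}_{\rm Sp}(\wedge^iV):\phi(P^2\wedge\wedge^{i-2}V)=0\}$. For $i=0,1$ the condition is vacuous and one recovers $\mathbb{Q}\delta_0$ and $\mathbb{Q}\delta_1$; for $i=2$ it forces $\phi$ to kill $P^2$, leaving the projector onto $\mathbb{Q}l$, that is $\mathbb{Q}\,(l\otimes\gamma_{\rm min})$.

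The main obstacle is to show $\ker_i=0$ for $i\geq3$, i.e. that $P^2\wedge\wedge^{i-2}V=\wedge^iV$. Transposing under Poincar\'e duality, this is equivalent to the statement that no nonzero $y\in\wedge^{2g-i}V$ satisfies $\beta\wedge y=0$ for all $\beta\in P^2$, and I would prove this by elementary exterior algebra. The annihilator ${\rm Ann}_{\wedge^2V}(y)$ contains the codimension-one subspace $P^2$, hence equals $\wedge^2V$ or $P^2$. In the first case $\eta\wedge y=0$ for all $\eta\in\wedge^2V$, which forces $y=0$ whenever $\deg y<2g$. In the second case $\rho_y\colon\eta\mapsto\eta\wedge y$ has rank $1$, so $v\wedge w\wedge y\in\mathbb{Q}(l\wedge y)$ for all $v,w\in V$; fixing $w$, the map $v\mapsto v\wedge(w\wedge y)$ has rank at most $1$, whereas if $w\wedge y\neq0$ its rank is at least $2g-\deg(w\wedge y)$ because $\{v:v\wedge z=0\}$ has dimension at most $\deg z$ in the exterior algebra. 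This would force $\deg y\geq 2g-2$, impossible when $i\geq3$, so $w\wedge y=0$ for all $w$ and $y=0$. Summing the $\ker_i$ over $i$ then yields $\ker(\overline{\tilde\beta\cup})=\langle\delta_0,\delta_1,l\otimes\gamma_{\rm min}\rangle$, as claimed.
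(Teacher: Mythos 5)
Your proof is correct, and after the common first step it follows a genuinely different route from the paper's. Both arguments begin the same way: Mumford--Tate genericity identifies the rational Hodge classes in each K\"unneth piece $H^i(A,\mathbb{Q})\otimes H^{2g-i}(A,\mathbb{Q})\cong {\rm End}(\wedge^iV)$ with the span of the Lefschetz (isotypic) projectors, i.e.\ with ${\rm End}_{{\rm Sp}(V)}(\wedge^iV)$. From there the paper passes to the complexification: it sends $\tilde\beta$ to a generic $\eta\in H^2(A,\mathcal{O}_A)$ (using Zariski density of $H^2(A,\mathbb{Q})$ in $H^{0,2}$), notes that any $\phi$ outside the span of $\delta_0,\delta_1,l\otimes\gamma_{\rm min}$ has image containing a nonzero $(p,q)$-piece with $q\le g-2$, and invokes --- with the details left as ``one easily checks'' --- the nonvanishing of $\eta\cup$ on each of the finitely many such pieces. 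You instead stay rational and representation-theoretic: you split $\tilde\beta=cl+\beta_0$, use irreducibility of the primitive part $P^2$ to show that $\phi\circ L_{\beta_0}$ can only be ${\rm Sp}$-equivariant if it vanishes, i.e.\ if $\phi$ kills $P^2\wedge\bigwedge^{i-2}V$, and then prove by elementary exterior algebra that $P^2\wedge\bigwedge^{i-2}V=\bigwedge^iV$ for $i\ge 3$. Your route buys two things: it makes fully explicit the nondegeneracy step that the paper only sketches, and it gives the slightly stronger conclusion that the kernel is exactly $\langle\delta_0,\delta_1,l\otimes\gamma_{\rm min}\rangle$ for \emph{every} $\tilde\beta\notin\mathbb{Q}l$, not merely for generic $\tilde\beta$. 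The paper's route is shorter on the page and fits the complex-analytic framing used in the surrounding argument (where $\beta$ really lives in $H^2(A,\mathcal{O}_A)$). One cosmetic slip: in Case 1 of your exterior-algebra lemma, ``$\eta\wedge y=0$ for all $\eta\in\wedge^2V$ forces $y=0$'' requires $\deg y\le 2g-2$ rather than $\deg y<2g$ (for $\deg y=2g-1$ the hypothesis is vacuous); this is harmless since in your application $\deg y\le 2g-3$.
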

\begin{proof} As the cup-product by ${\rm pr}_2^*\tilde{\beta}$ acts as $Id\otimes (\tilde{\beta}\cup)$ on
$H^{i}(A,\mathbb{Q})\otimes H^{2g-i}(A,\mathbb{Q})$,  it is clear for degree or Hodge type reasons that the three classes $\delta_0$, $\delta_1$ and $l\otimes\gamma_{\rm min}$ belong to the kernel of $ \overline{\tilde{\beta}\cup}$. In the other direction,
let us   describe the rational Hodge classes on $A\times A$. By assumption, the  Mumford-Tate group of $A$ is the symplectic group ${\rm Sp}(2g)$, hence the Lefschetz decomposition
\begin{eqnarray}\label{eqlefdec} H^i(X,\mathbb{Q})=\oplus_{i-2j\geq 0} l^j\cup H^{i-2j}(A,\mathbb{Q})_{\rm prim}\end{eqnarray}
for $i\leq g$, is a decomposition into simple Hodge structures and there are no nonzero morphisms of Hodge structures
$$H^{i-2j}(A,\mathbb{Q})_{\rm prim}\rightarrow H^{i-2j'}(A,\mathbb{Q})_{\rm prim}$$
for $j\not=j'$. It follows that the rational Hodge classes in
$$H^{2g-i}(A,\mathbb{Q})\otimes H^i(A,\mathbb{Q})={\rm End}(H^i(A,\mathbb{Q})),$$
namely the morphisms of Hodge structures in ${\rm End}(H^i(A,\mathbb{Q}))$, are linear combinations of the projectors $\pi_{i,j}$ of the Lefschetz decompositions (\ref{eqlefdec}). It is well-known that these projectors are algebraic with rational coefficients (this is a consequence of the Lefschetz standard conjecture for abelian varieties, see \cite{lieberman}). When $i\geq g$, the discussion is the same, except that we use the Lefschetz isomorphism
$$H^{i}(X,\mathbb{Q})\cong H^{2g-i}(X,\mathbb{Q})$$
and the Lefschetz decomposition on $H^{2g-i}(X,\mathbb{Q})$.
We now observe that, for any $i$, and for any Hodge class $\phi$  in the subspace  $${\rm Hdg}^{2g}(A\times A,\mathbb{Q})\cap {\rm End}(H^{i}(A,\mathbb{Q}))\subset {\rm End}_0(H^*(A,\mathbb{Q}))=H^{2g}(A\times A,\mathbb{Q})$$ generated by all
the $\pi_{i,j}$ except  the three classes appearing in (\ref{eqthrreclasses}), the  image ${\rm Im}\,\phi\subset H^{i}(A,\mathbb{Q})$ is a Hodge structure with a nonzero component  $({\rm Im}\,\phi)^{p,q}$ for some  $q\leq g-2$. Furthermore, there are only finitely many such Hodge substructures, since they all must be  direct sums of Lefschetz components appearing in the Lefschetz decomposition (\ref{eqlefdec}). One then easily checks that
for a generic $\eta\in H^2(A,\mathcal{O}_A)$, and any $\phi$ as above, the cup-product map
$$\eta:({\rm Im}\,\phi)^{p,q}\rightarrow H^{p,q+2}(A)$$
is nonzero, hence $\eta\cup \phi\not=0$ in ${\rm Hom}(H^{p,q}(A),H^{p,q+2}(A))$.
It follows from the above discussion that
for a general $\eta\in H^2(A,\mathcal{O}_A)$, the cup-product map
$$\eta\cup: {\rm Hdg}^{2g}(A\times A,\mathbb{Q})\rightarrow H^{2g+2}(A\times A,\mathbb{C})/F^{g+1}H^{2g+2}(A\times A,\mathbb{C})$$
has for kernel the space generated by (\ref{eqthrreclasses}). This  implies the lemma because the image of $H^2(A,\mathbb{Q})$ in $H^2(A,\mathcal{O}_A)$ is Zariski dense and one has the following commutative diagram for any  $\tilde{\beta}\in H^2(A,\mathbb{Q})$ with image $\eta \in H^2(A,\mathcal{O}_A)$
$$  \begin{matrix}
 & {\rm Hdg}^{2g}(A\times A,\mathbb{Q})&\stackrel{\overline{\tilde{\beta}\cup}}{\rightarrow}& H^{2g+2}(A\times A,\mathbb{Q})/{\rm Hdg}^{2g+2}(A\times A,\mathbb{Q})
 \\
&\parallel& & \downarrow
\\
&{\rm Hdg}^{2g}(A\times A,\mathbb{Q})&\stackrel{\eta\cup }{\rightarrow} &H^{2g+2}(A,\mathbb{C})/F^{g+1}H^{2g+2}(A,\mathbb{C}).
\end{matrix}
$$
\end{proof}
\begin{proof}[Proof of Proposition \ref{lecomputclassbrauer}]
Lemma \ref{lehdgbrauer} applied to the Brauer-Severi variety $A\times P_\beta\rightarrow A\times A$   says that
\begin{eqnarray}\label{eqcontbrauer} [W_\beta]\cup {\rm pr}_2^*\beta=0\,\,{\rm in}\,\,{\rm Tors}(H^{2g+2}(A\times A,\mathbb{C})/(F^{g+1}H^{2g+2}(A\times A,\mathbb{C})+H^{2g+2}(A\times A,\mathbb{Z})))\\
\label{eqsecondeequality}
=H^{2g+2}(A\times A,\mathbb{Q})/({\rm Hdg}^{2g+2}(A\times A,\mathbb{Q})+H^{2g+2}(A\times A,\mathbb{Z})).
\end{eqnarray}
The second equality (\ref{eqsecondeequality}) follows from the fact that $${\rm Hdg}^{2g+2}(A\times A,\mathbb{Q})={\rm Ker}\,( H^{2g+2}(A\times A,\mathbb{Q})\rightarrow   H^{2g+2}(A\times A,\mathbb{C})/F^{g+1}H^{2g+2}(A\times A,\mathbb{C})).$$
Equation (\ref{eqcontbrauer}) says equivalently that, with the notation of (\ref{eqmubeta}),
\begin{eqnarray} \label{eqvanmodZ} \overline{{\rm pr}_2^*\tilde{\beta}\cup} [W_\beta]=0\,\,{\rm in} \, \,\,(H^{2g+2}(A\times A,\mathbb{Z})/{\rm Hdg}^{2g+2}(A\times A,\mathbb{Z}))\otimes \mathbb{Q}/\mathbb{Z}.
\end{eqnarray}
We now apply the following elementary
\begin{lemm} \label{lelatticeelem} Let $H_1,\,H_2$ be two lattices, and $\psi \in  {\rm Hom}\,(H_1,H_2)\otimes \mathbb{Q}$ be an injective morphism. Then there exists an integer $d$, such that, for any integer $N$, and any $h\in H_1$ with
$\frac{1}{N}\psi(h)\in H_2\subset H_2\otimes \mathbb{Q}$, one has
$h\in \frac{N}{d} H_1$. In particular, $h\in H_1$ if $d$ divides $N$, and $h\in H_1$ is arbitrarily divisible if $\frac{N}{d}$ is.
\end{lemm}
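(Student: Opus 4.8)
The claim is purely about lattices: given an injective rational linear map $\psi \in \mathrm{Hom}(H_1,H_2)\otimes\mathbb{Q}$, there is a uniform integer $d$ (depending only on $\psi$, not on $N$ or $h$) such that whenever $\frac{1}{N}\psi(h) \in H_2$ for some $h \in H_1$ and integer $N$, we can conclude $h \in \frac{N}{d}H_1$. Intuitively, $d$ measures the "denominators" introduced by $\psi^{-1}$: if $\psi(h)$ is divisible by $N$ in $H_2$, then $h$ should be divisible by roughly $N$ in $H_1$, up to a bounded correction factor $d$. Let me think about how to prove this cleanly.

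**The plan.** First I would clear denominators and reduce to an integral map. Since $\psi$ is rational, choose an integer $m$ with $m\psi \in \mathrm{Hom}(H_1,H_2)$ (an honest map of lattices); injectivity is preserved. The hypothesis $\frac{1}{N}\psi(h)\in H_2$ becomes $\frac{m}{N}\psi(h) = \frac{1}{N}(m\psi)(h) \in m H_2 \subset H_2$, i.e. $(m\psi)(h) \in N H_2$. So without loss of generality I may assume $\psi$ is integral and injective, at the cost of tracking the factor $m$. The core assertion to prove for integral injective $\psi$ is then: there is an integer $d_0$ with the property that $\psi(h) \in N H_2 \Rightarrow h \in \frac{N}{d_0}H_1$.

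**The main step.** Here I would use the Smith normal form (elementary divisor theorem) for the integral map $\psi\colon H_1 \to H_2$. Choosing bases of $H_1$ and $H_2$ adapted to $\psi$, the map is diagonal: $\psi(e_i) = a_i f_i$ with positive integers $a_i$ (the elementary divisors), where injectivity guarantees all $a_i \neq 0$ and $\mathrm{rk}\,H_1 = r$ coordinates are hit. Writing $h = \sum_i x_i e_i$ with $x_i \in \mathbb{Z}$, the condition $\psi(h) \in N H_2$ reads $a_i x_i \equiv 0 \pmod N$ for each $i$, hence $x_i \equiv 0 \pmod{N/\gcd(a_i,N)}$. Since $\gcd(a_i,N)$ divides $a_i$, we get $\frac{N}{a_i}\mid \frac{N}{\gcd(a_i,N)} \mid x_i$ after multiplying by $a_i$; more cleanly, $a_i x_i$ divisible by $N$ gives $x_i$ divisible by $N/\gcd(N,a_i)$, and $\gcd(N,a_i)$ divides $a_i$. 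Setting $d_0 := \prod_i a_i$ (or $d_0 := \mathrm{lcm}_i\, a_i$, which suffices), each $x_i$ is divisible by $N/\gcd(N,a_i)$, so $d_0 x_i$ is divisible by $N$, giving $h \in \frac{N}{d_0}H_1$. Folding back the denominator-clearing factor $m$, I set $d := m\,d_0$ (or the lcm variant) to obtain the statement for the original $\psi$.

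**Expected obstacle.** The argument is essentially routine once Smith normal form is invoked; the only subtlety is bookkeeping the direction of the divisibility and ensuring $d$ depends \emph{only} on $\psi$ and not on $N$. The factor of concern is that $\gcd(N,a_i)$ depends on $N$, so one must bound it uniformly: since $\gcd(N,a_i)$ always divides $a_i$, the bound $a_i$ (independent of $N$) controls it, which is exactly why taking $d_0$ built from the elementary divisors works for every $N$ simultaneously. I would also note that injectivity is precisely what guarantees all elementary divisors are nonzero, so no coordinate is uncontrolled. Thus the hard part is not any single computation but recognizing that the correct invariant $d$ is governed by the elementary divisors of $\psi$, which are bounded independently of the divisibility parameter $N$.
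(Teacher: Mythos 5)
Your proof is correct and complete: clearing denominators to reduce to an integral injective map and then invoking the Smith normal form, with $d_0$ the lcm (or product) of the elementary divisors, is exactly the standard argument, and your key observation that $\gcd(N,a_i)$ divides $a_i$ and hence is bounded independently of $N$ is the point that makes $d$ uniform. The paper itself gives no proof of this lemma (it is introduced as ``elementary'' and immediately applied), so there is nothing to compare against; your write-up supplies the omitted argument. The only cosmetic remark is that the extra factor $m$ in your final $d=m\,d_0$ is harmless but unnecessary, since $d_0$ built from the elementary divisors of $m\psi$ already works.
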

We apply this lemma to $$H_1={\rm Hdg}^{2g}(A\times A,\mathbb{Z})/\langle\delta_0,\,\delta_1,\,l\otimes \gamma_{\rm min}\rangle,\,\,  H_2={ H}^{2g+2}(A\times A,\mathbb{Z})/{\rm Hdg}^{2g+2}(A\times A,\mathbb{Z}),$$ taking for  $\psi$ the cup-product map  $\overline{{\rm pr}_2^*\tilde{\beta}\cup}$. It satisfies our assumptions by Lemma \ref{leeqmubeta}. We thus conclude that (\ref{eqvanmodZ}) implies that, when $N$ is arbitrarily divisible,  the class
$[W_{\frac{1}{N}\beta}]$ is arbitrarily divisible modulo $\langle\delta_0,\,\delta_1,\,l\otimes \gamma_{\rm min}\rangle$. As we know furthermore that the K\"{u}nneth component $[W_{\frac{1}{N}\beta}]$ is equal to $\delta_1$, this concludes the proof of Proposition  \ref{lecomputclassbrauer}.
\end{proof}

\subsection{On the existence of a universal codimension $2$ cycle for a cubic threefold  \label{sectheocubicsplit}}
 This section is devoted to the proof of Theorem \ref{theocubicsplit}. Let $X$ be a cubic threefold and $J=J^3(X)$ its intermediate Jacobian. This is a 5-dimensional principally polarized abelian variety.
 We assume  that $X$ admits a   universal codimension $2$ cycle  $\Gamma\in{\rm CH}^2(J\times X)$ and we want to prove, under the  assumption that $X$ is Mumford-Tate general,  that $J$ is split.  Note that, especially in view of Proposition \ref{procubiceasy}, the statement has some  similarities with Theorem \ref{theobrauer0univ}. There are however two differences. First of all, in the cubic case, the Iliev-Markushevich-Tikhomirov construction does not provide  a Brauer-Severi variety but only a Brauer-Severi variety over a Zariski open set of $J$. Secondly, in the cubic case, we are given  only one  (generic) Brauer-Severi variety admitting a universal $0$-cycle, while in Theorem \ref{theobrauer0univ}, we are given Brauer-Severi varieties admiting a universal $0$-cycle, with  Brauer class  general of arbitrarily high order.

 Let $\Sigma=F_1(X)$ be the surface of lines in $X$. We will first prove some preparatory lemmas.
 By \cite{CG},  the universal line
$$P\subset \Sigma\times X,$$
 induces an embedding $j=\Phi_X\circ P_*:\Sigma\rightarrow J^3(X)=J$
and an isomorphism
\begin{eqnarray}\label{eqphiP}P_*=j_*:{\rm Alb}(\Sigma)\rightarrow J.\end{eqnarray}
According to \cite{mshen}, given $\Gamma$, there exists a correspondence $\Gamma'\in{\rm CH}^2(J\times \Sigma)$ such that
\begin{eqnarray}\label{eqPgamamgammaprime0} P_*\circ \Gamma'_*=\Gamma_*: {\rm CH}_0(J)_{\rm hom}\rightarrow {\rm CH}^2(X)_{\rm hom}.\end{eqnarray}
As ${\rm CH}^2(X)_{\rm hom}\cong J^3(X)=J$ via the Abel-Jacobi map $\Phi_X$ (see \cite{CG}, and \cite{blochsrinivas} for a  more general result), (\ref{eqPgamamgammaprime0}) is equivalent to the fact that
\begin{eqnarray}\label{eqPgamamgammaprime1} P_*\circ \Gamma'_*=\Gamma_*=Id_J: J\rightarrow J.\end{eqnarray}
Via the isomorphism (\ref{eqphiP}), we can write (\ref{eqPgamamgammaprime1}) as

\begin{eqnarray}\label{eqPgamamgammaprime} j_*\circ \Gamma'_*=Id_J,\end{eqnarray}
and equivalently, looking at the action of these correspondences on homology
\begin{eqnarray}\label{eqPgamamgammaprimecoh} j_*\circ [\Gamma']_*=Id_{H_1(J,\mathbb{Z})}.\end{eqnarray}
Our strategy will be   to modify $\Gamma'$ by composing it with self-correspondences of $\Sigma$ so as to achieve the condition
\begin{eqnarray} \label{eqclassprojkun}[j\circ \Gamma']=\delta_1,\end{eqnarray} where $\delta_1$ is the K\"{u}nneth projector on $H_1(J,\mathbb{Z})$.
We first note the following
\begin{lemm} \label{lekunprojsigma} Denote by  $\delta_{1,\Sigma}\in H^4(\Sigma\times \Sigma,\mathbb{Z})$ the K\"unneth projector onto $H_1(\Sigma,\mathbb{Z})$. Then $2\delta_{1,\Sigma}$ is algebraic.
Equivalently,  twice the K\"unneth projector $\delta_{3,\Sigma}$ onto  $H_3(\Sigma,\mathbb{Z})$ is algebraic.
\end{lemm}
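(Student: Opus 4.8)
The plan is to imitate the Poincar\'e-divisor construction of Lemma \ref{exintro}, adapted to the surface $\Sigma$ through its Albanese embedding. Recall from \cite{CG} that the Albanese map $a:\Sigma\to J$ is an embedding inducing an isomorphism $a^*:H^1(J,\mathbb Z)\xrightarrow{\sim}H^1(\Sigma,\mathbb Z)$, and that $J$ is principally polarized by $\theta$. Using $\theta$ to identify $\widehat J\cong J$, let $\mathcal P$ be the Poincar\'e divisor on $J\times J$ and set $\omega:=(a\times a)^*c_1(\mathcal P)\in\mathrm{NS}(\Sigma\times\Sigma)$. This is an algebraic divisor class which, like $c_1(\mathcal P)$, is pure of K\"unneth type $(1,1)$, and which under $a^*$ is exactly the element of $H^1(\Sigma)\otimes H^1(\Sigma)$ representing the unimodular symplectic form $\langle\,,\,\rangle_\theta$. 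For any divisor class $D$ on $\Sigma$ I would then form the codimension $2$ algebraic class
\[
Z_D:=\omega\cdot\mathrm{pr}_1^*D\in H^4(\Sigma\times\Sigma,\mathbb Z).
\]
Since $\omega$ is of type $(1,1)$ and $\mathrm{pr}_1^*D$ of type $(2,0)$, the class $Z_D$ is purely of type $(3,1)$; it therefore acts as an endomorphism $T_D$ of $H^1(\Sigma)$ and vanishes on all other $H^k(\Sigma)$, so there is no spurious K\"unneth component to clean up. Explicitly $T_D=\phi_\theta\circ(\cup D)$, where $\cup D:H^1(\Sigma)\to H^3(\Sigma)$ and $\phi_\theta=\omega_*:H^3(\Sigma)\to H^1(\Sigma)$ is the transported polarization isomorphism.

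The heart of the argument is the computation of $T_D$. Choosing a symplectic basis $a_1,\dots,a_5,b_1,\dots,b_5$ of $H^1(\Sigma,\mathbb Z)$ for $\langle\,,\,\rangle_\theta$, one has $\sum_i a_i\cup b_i=a^*\theta$ in $H^2(\Sigma)$, and a direct trace computation yields $\mathrm{tr}(T_D)=2\sum_i\langle a_i,b_i\rangle_D=2\,(a^*\theta\cdot D)$. Whenever $D$ is cohomologically proportional to $a^*\theta$ the symplectic form $\langle\,,\,\rangle_D$ is proportional to $\langle\,,\,\rangle_\theta$, so $T_D$ is a scalar, and comparing traces on the rank $10$ lattice gives $[Z_D]=\tfrac15\,(a^*\theta\cdot D)\,\delta_{1,\Sigma}$. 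For $D=a^*\theta$ this is unconditional: using $[\Sigma]=\theta^3/3!$ in $H^6(J,\mathbb Z)$ together with formula (\ref{eqintgammamin}) one finds $\langle\,,\,\rangle_{a^*\theta}=4\langle\,,\,\rangle_\theta$, i.e. $T_{a^*\theta}=4\,\mathrm{id}$ and $(a^*\theta)^2=20$, so that $[Z_{a^*\theta}]=4\delta_{1,\Sigma}$ is algebraic.

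To reach the sharp factor $2$ I would bring in the canonical class. By the classical relation $2K_\Sigma=3a^*\theta$ for the Fano surface (compatible with $K_\Sigma^2=45$ and $(a^*\theta)^2=20$), the class $K_\Sigma$ is again proportional to $a^*\theta$, so $T_{K_\Sigma}$ is scalar with $a^*\theta\cdot K_\Sigma=\tfrac32(a^*\theta)^2=30$, whence $[Z_{K_\Sigma}]=6\delta_{1,\Sigma}$. Subtracting the two algebraic classes then gives
\[
[Z_{K_\Sigma}]-[Z_{a^*\theta}]=(6-4)\,\delta_{1,\Sigma}=2\delta_{1,\Sigma},
\]
which proves that $2\delta_{1,\Sigma}$ is algebraic; the equivalent assertion for $\delta_{3,\Sigma}={}^t\delta_{1,\Sigma}$ follows by transposing the cycle.

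The main obstacle is not the cycle-theoretic bookkeeping but the two geometric identifications on which the constants rest: the value $[\Sigma]=\theta^3/3!$ of the fundamental class of the Fano surface in $J$, and the relation $2K_\Sigma=3a^*\theta$ between its canonical class and $a^*\theta$. These are what I would need to establish (or extract from \cite{CG}), together with the scalarity of $T_D$, which I deliberately reduce to the case of $D$ cohomologically proportional to $a^*\theta$ so as to avoid any Mumford--Tate genericity hypothesis on $X$. From this viewpoint the factor $2$ is precisely the measure of the divisibility of $a^*\theta$ against $\mathrm{NS}(\Sigma)$, namely $\tfrac15(a^*\theta\cdot\mathrm{NS}(\Sigma))$, which equals $\tfrac15\gcd(20,30,\dots)=2$.
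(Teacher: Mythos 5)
Your construction is essentially the paper's: Voisin takes the algebraic class $[(j,j)^*\mathcal{P}]\cup \mathrm{pr}_2^*l$ with $l=[C_\Delta]$ and $2l=j^*\theta$, and computes its action on $H_1(\Sigma,\mathbb{Z})$ by pushing forward to $J$ and using $[\Sigma]=\theta^3/3!$ together with the fact that $[\mathcal{P}]_*$ inverts $\gamma_{\rm min}\cup$, obtaining $2\delta_{1,\Sigma}$ directly; since $K_\Sigma-a^*\theta=3l-2l=l$, your difference $Z_{K_\Sigma}-Z_{a^*\theta}=Z_{K_\Sigma-a^*\theta}$ is (up to transposition, which is immaterial by the stated equivalence of the two assertions) literally the same cycle, and your trace computation $T_{a^*\theta}=4\,\mathrm{id}$ is the same calculation as the paper's Lemma on $\langle\,,\,\rangle_{a^*\theta}=4\langle\,,\,\rangle_\theta$, organized differently. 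The only divergence is which classical Clemens--Griffiths input carries the factor $2$: the paper uses the divisibility $j^*\theta=2[C_\Delta]$ in ${\rm NS}(\Sigma)$, whereas you use the equivalent relation $2K_\Sigma=3a^*\theta$; both are correct and of the same depth, so your argument is sound.
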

\begin{rema}{\rm The integral cohomology of $\Sigma$ is torsion free (see \cite{collino}), so the $\delta_{i,\Sigma}$ are well defined. }
\end{rema}
\begin{coro}\label{corotardifquoiqueutile} Twice the K\"unneth projector $\delta_{2,\Sigma}$ of $\Sigma$ is also algebraic.
\end{coro}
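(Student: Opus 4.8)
The plan is to exploit the fact that the K\"unneth projectors sum to the diagonal class, together with the algebraicity statements already available. Since $H^*(\Sigma,\mathbb{Z})$ is torsion-free (as recalled in the remark following Lemma \ref{lekunprojsigma}), the $\delta_{i,\Sigma}$ are well-defined integral classes and the K\"unneth decomposition of the diagonal holds over $\mathbb{Z}$, namely $\Delta_\Sigma=\sum_{i=0}^{4}\delta_{i,\Sigma}$ in $H^4(\Sigma\times\Sigma,\mathbb{Z})$. The diagonal $\Delta_\Sigma$ is of course algebraic, being the class of an actual subvariety.

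First I would note that the two extreme projectors are visibly algebraic: $\delta_{0,\Sigma}=[\Sigma\times\{\mathrm{pt}\}]$ and $\delta_{4,\Sigma}=[\{\mathrm{pt}\}\times\Sigma]$ are classes of honest algebraic cycles on $\Sigma\times\Sigma$. Next, by Lemma \ref{lekunprojsigma} (in both of its equivalent formulations) the classes $2\delta_{1,\Sigma}$ and $2\delta_{3,\Sigma}$ are algebraic.

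It then remains only to solve for the middle projector. Multiplying the K\"unneth decomposition of the diagonal by $2$ and isolating $2\delta_{2,\Sigma}$ yields
$$2\delta_{2,\Sigma}=2\Delta_\Sigma-2\delta_{0,\Sigma}-2\delta_{1,\Sigma}-2\delta_{3,\Sigma}-2\delta_{4,\Sigma},$$
which exhibits $2\delta_{2,\Sigma}$ as a $\mathbb{Z}$-linear combination of algebraic cycle classes, hence algebraic. There is no serious obstacle in this argument; the one point that must be in place is the integrality of the K\"unneth decomposition of $\Delta_\Sigma$, which is precisely what the torsion-freeness of $H^*(\Sigma,\mathbb{Z})$ guarantees, and this justifies the bookkeeping with the factor $2$ throughout.
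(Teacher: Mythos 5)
Your proof is correct and is essentially identical to the paper's: both write $2\delta_{2,\Sigma}=2[\Delta_\Sigma]-2\delta_{0,\Sigma}-2\delta_{1,\Sigma}-2\delta_{3,\Sigma}-2\delta_{4,\Sigma}$ and invoke Lemma \ref{lekunprojsigma} for $2\delta_{1,\Sigma}$, $2\delta_{3,\Sigma}$ together with the algebraicity of the extreme projectors. Your added remark on torsion-freeness justifying the integral K\"unneth decomposition matches the remark already recorded in the paper.
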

\begin{proof} Indeed we have $2\delta_{2,\Sigma}=2[\Delta_\Sigma]-2\delta_{1,\Sigma}-2\delta_{3,\Sigma}-2\delta_{0,\Sigma}-2\delta_{4,\Sigma}$, where $2\delta_{1,\Sigma}$ and $2\delta_{3,\Sigma}$ are algebraic by Lemma \ref{lekunprojsigma}, and $\delta_{0,\Sigma}=[\Sigma\times {\rm pt}]$, $\delta_{4,\Sigma}=[{\rm pt}\times \Sigma ]$ are also algebraic, as already mentioned.
\end{proof}
\begin{proof}[Proof of Lemma \ref{lekunprojsigma}] We know by \cite{CG}  that the class $l\in H^2(\Sigma,\mathbb{Z})$ of the curve $C_\Delta\subset \Sigma$ of lines  meeting a given line $\Delta\subset X$ satisfies
\begin{eqnarray} \label{eqajoutedefl}2l=j^*\theta,\end{eqnarray}
where $\theta\in H^2(J,\mathbb{Z})$ is the class of  a Theta-divisor.
Let $(j,j)^*\mathcal{P}$ be the pull-back to $\Sigma\times \Sigma$ of a Poincar\'{e} divisor $\mathcal{P}$ on $J\times J$, so that
$$[(j,j)^*\mathcal{P}]\in H^1(\Sigma,\mathbb{Z})\otimes H^1(\Sigma,\mathbb{Z})\subset H^2(\Sigma,\mathbb{Z})$$
is algebraic. The class
\begin{eqnarray}\label{eqactioncalP}\gamma:=[(j,j)^*\mathcal{P}]\cup {\rm pr_2}^*l\in H^1(\Sigma,\mathbb{Z})\otimes H^3(\Sigma,\mathbb{Z})\subset H^4(\Sigma,\mathbb{Z})\end{eqnarray}
is thus algebraic. It is clear from (\ref{eqactioncalP}) that $\gamma$ acts trivially on $H_i(\Sigma,\mathbb{Z})$ for $i\not=1$. The action of
$\gamma$ on $H_1(\Sigma,\mathbb{Z})$ is given by
\begin{eqnarray} \label{eqactionprojsigma} \gamma_*(u)=l\cup j^*([\mathcal{P}]_*(j_*u))\,\,{\rm in}\,\,H^3(\Sigma,\mathbb{Z})\cong H_1(\Sigma,\mathbb{Z}).
\end{eqnarray}
It remains to see that the right hand side is equal to $2u$. Pushing forward to $J$, we get, using the fact that $l=\frac{1}{2}j^*\theta$,
$$j_*(\gamma_*(u))=\frac{1}{2}\theta\cup j_*j^* ([\mathcal{P}]_*(j_*u))=\frac{1}{2}\theta\cup [\Sigma]\cup  [\mathcal{P}]_*(j_*u)\,\,{\rm in}\,\,H^{9}(J,\mathbb{Z})\cong H_1(J,\mathbb{Z}).$$
The right hand side is equal to $2j_*u$ because
$$[\Sigma]=\frac{\theta^3}{3!},\,\,\gamma_{\rm min}=\frac{\theta^4}{4!},$$
where the first equality is proved in \cite{CG}, and
$[\mathcal{P}]_*:H_1(J,\mathbb{Z})\rightarrow H^1(J,\mathbb{Z})$ is the inverse of $\gamma_{\rm min} \cup : H^1(J,\mathbb{Z})\rightarrow H^9(J,\mathbb{Z})=H_1(A,\mathbb{Z})$.
\end{proof}
We  now study   the action of $j_*\circ [\Gamma']_*$ on the other homology groups of $J$.
\begin{lemm} \label{leh32} The image of $j_*\circ [\Gamma']_*:H_3(J,\mathbb{Z})\rightarrow H_3(J,\mathbb{Z})$ is contained in
$2j_*H_3(\Sigma,\mathbb{Z})$.
\end{lemm}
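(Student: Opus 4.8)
The plan is to reduce the statement to a divisibility-by-two assertion and then to extract the factor $2$ from the relation $2l=j^*\theta$ of (\ref{eqajoutedefl}), in the same spirit as the proof of Lemma \ref{lekunprojsigma}. Since $j_*\circ[\Gamma']_*$ factors as $H_3(J,\mathbb Z)\xrightarrow{[\Gamma']_*}H_3(\Sigma,\mathbb Z)\xrightarrow{j_*}H_3(J,\mathbb Z)$, its image is automatically contained in $j_*H_3(\Sigma,\mathbb Z)$, so the entire content of the lemma is the extra factor $2$. I would first observe that $j_*\colon H_3(\Sigma,\mathbb Z)\to H_3(J,\mathbb Z)$ is injective: under Poincar\'e duality it is adjoint to the restriction $j^*\colon H^3(J,\mathbb Z)\to H^3(\Sigma,\mathbb Z)$, which is the cup-product map $\bigwedge^3H^1(\Sigma)\to H^3(\Sigma)$ because $j^*\colon H^1(J)\to H^1(\Sigma)$ is an isomorphism; this is surjective over $\mathbb Q$ since the rational cohomology ring of the Fano surface is generated in degree $1$ (\cite{CG}), and $H^*(\Sigma,\mathbb Z)$ is torsion free by \cite{collino}, so $j_*$ is integrally injective. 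Hence it suffices to prove that $[\Gamma']_*\bigl(H_3(J,\mathbb Z)\bigr)\subseteq 2\,H_3(\Sigma,\mathbb Z)$.

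For the mechanism producing the factor $2$, set $W=H_1(J,\mathbb Z)=H_1(\Sigma,\mathbb Z)$, identified through the isomorphism $j_*$ of (\ref{eqphiP}), so that $H_3(J,\mathbb Z)=\bigwedge^3W$ and $\theta$ induces a unimodular symplectic form on $W$. The point is that the two ways of passing between $H_1$ and $H_3$ on $\Sigma$ differ by a factor $2$: the integrally defined, geometric Lefschetz operator is $\cup\,j^*\theta=\cup\,2l=2(\cup l)$, twice the primitive operator $\cup l$. Concretely, the projection formula gives, for any $w\in H_3(\Sigma,\mathbb Z)$,
\[ \theta\cap j_*w=j_*(j^*\theta\cap w)=2\,j_*(l\cap w), \]
so that $\theta\cap$ carries $j_*H_3(\Sigma,\mathbb Z)$ into $2\,j_*H_1(\Sigma,\mathbb Z)$. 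I would combine this with the hard Lefschetz isomorphism $\cup l\colon H^1(\Sigma,\mathbb Q)\xrightarrow{\ \sim\ }H^3(\Sigma,\mathbb Q)$ to recover $[\Gamma']_*x$ from $\theta\cap\bigl(j_*[\Gamma']_*x\bigr)$ up to this factor $2$, using that on $H_1(J)$ one already controls $j_*\circ[\Gamma']_*$ by (\ref{eqPgamamgammaprimecoh}).

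To turn this into a genuine integral divisibility statement I would use that $[\Gamma']$ is algebraic, so $[\Gamma']_*|_{H_3}$ is a morphism of Hodge structures $\bigwedge^3W\to H_3(\Sigma)$; for $X$ Mumford--Tate general (the regime in which the lemma is ultimately applied) I expect the space of such rational morphisms to be one-dimensional, spanned by the symplectic contraction $\bigwedge^3W\to W\cong H_1(\Sigma)$ followed by the rational inverse Lefschetz isomorphism, and tracing its integral generator through the displayed identity and $2l=j^*\theta$ then forces the image into $2\,H_3(\Sigma,\mathbb Z)$. The hardest step, and the real obstacle, is precisely this last integral passage: one must verify that the factor $2$ created by $j^*\theta=2l$ is not cancelled when one replaces the rational Lefschetz isomorphism by integral lattices, i.e. one must control the denominators of $(\cup l)^{-1}$ on $H^*(\Sigma,\mathbb Z)$ together with the saturation of $j_*H_3(\Sigma,\mathbb Z)$ inside $H_3(J,\mathbb Z)$ (equivalently the integral surjectivity of the cup-product map $j^*\colon H^3(J,\mathbb Z)\to H^3(\Sigma,\mathbb Z)$). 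This lattice bookkeeping, rather than any further geometric input, is where I expect the difficulty to concentrate.
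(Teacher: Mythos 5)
Your first half tracks the paper closely: using Mumford--Tate generality you reduce, exactly as the paper does, to the statement that $j_*\circ[\Gamma']_*$ acts on $H_3(J,\mathbb{Q})$ as $\lambda\pi$ for a single scalar $\lambda$, where $\pi$ is the Lefschetz projector onto $\theta^3\cup H^1(J,\mathbb{Q})=j_*H_3(\Sigma,\mathbb{Q})$, and primitivity of $j_*H_3(\Sigma,\mathbb{Z})$ in $H^7(J,\mathbb{Z})$ gives $\lambda\in\mathbb{Z}$. The gap is exactly where you locate it, and the mechanism you propose for the factor $2$ does not survive the ``lattice bookkeeping'' you defer. The relation $j^*\theta=2l$ is indeed used in the paper, but for Lemma \ref{lekunprojsigma}, not here; for the present lemma it yields nothing. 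Under the identifications $j^*:H^1(J,\mathbb{Z})\cong H^1(\Sigma,\mathbb{Z})$ and $j_*:H^3(\Sigma,\mathbb{Z})\cong \frac{\theta^3}{3!}\cup H^1(J,\mathbb{Z})$ one computes $2\,j_*(l\cup j^*\beta)=j_*j^*(\theta\cup\beta)=\frac{\theta^4}{3!}\cup\beta=4\,\gamma_{\rm min}\cup\beta$, so $\cup\, l:H^1(\Sigma,\mathbb{Z})\to H^3(\Sigma,\mathbb{Z})$ is exactly twice a unimodular isomorphism. Hence $(\cup\, l)^{-1}$ carries a denominator of exactly $2$, which cancels the $2$ you extract from the projection formula: your identity $\theta\cap j_*w=2\,j_*(l\cap w)$ holds for \emph{every} $w\in H_3(\Sigma,\mathbb{Z})$, in particular for any integral class in $j_*H_3(\Sigma,\mathbb{Z})$, and therefore cannot distinguish $\lambda$ odd from $\lambda$ even.

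The paper obtains the parity from a different source. Writing $L=\frac{\theta^3}{3!}\cup H^1(J,\mathbb{Z})\subset H^7(J,\mathbb{Z})$, the unimodular pairing $\omega_7$ restricts on $L$ to four times a unimodular form; this is Lemma \ref{lepolcomp}, proved by an explicit Pontryagin-product computation on $C\times B$ in which the $4$ arises as $5-1$ (the degree of $\theta|_C$ minus a Poincar\'e-divisor contribution). One then uses the lattice-theoretic fact that an integral endomorphism of a unimodular lattice acting as $\lambda$ times the orthogonal projector onto a primitive sublattice forces $\lambda$ to annihilate the discriminant group of that sublattice; since ${\rm disc}(\omega_7|_L)$ is even, $\lambda$ must be even. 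This discriminant computation, not the relation $2l=j^*\theta$, is the missing ingredient, and your argument cannot be completed without it or a substitute for it.
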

\begin{proof} We observe that, with $\mathbb{Q}$-coefficients, the Hodge structure on $H_3(J,\mathbb{Q})=H^7(J,\mathbb{Q})$ splits by the Lefschetz decomposition as
\begin{eqnarray}\label{eqdech3A} H^7(J,\mathbb{Q})=\theta^2\cup H^3(J,\mathbb{Q})_{\rm prim}\oplus \theta^3\cup H^1(J,\mathbb{Q}),\end{eqnarray}
where the  Hodge structures of the two summands are simple, have  no nontrivial endomorphisms and admit no non-trivial morphisms from one to the other. These three facts follow from the fact that, by assumption,  the Mumford-Tate group of the   Hodge structure on $H^1(J,\mathbb{Q})$ is the symplectic group of $(H^1(J,\mathbb{Q}),\langle\,,\,\rangle_\theta)$. Note that, as $[\Sigma]=\frac{\theta^3}{3!}$ and $j^*:H^1(J,\mathbb{Q})\rightarrow H^1(\Sigma,\mathbb{Q})$ is surjective, the space
$\theta^3H^1(J,\mathbb{Q})$ is equal to $j_*H^1(\Sigma,\mathbb{Q})$.
As $j_*\circ [\Gamma']_*$ is a morphism of Hodge structures and the image of $j_*\circ [\Gamma']_*$ is contained in ${\rm Im}\,j_*$, it follows from the decomposition (\ref{eqdech3A}) with its stated properties that $j_*\circ [\Gamma']_*$ is a multiple $\lambda \pi$ of the projector $\pi$ on $j_*H^1(\Sigma,\mathbb{Q})=\theta^3\cup H^1(J,\mathbb{Q})$ associated with the decomposition (\ref{eqdech3A}). Our statement is thus that the coefficient $\lambda$ is an even integer. Using the fact that $[\Sigma]=\frac{\theta^3}{3!}$, one  easily  shows that the sublattice
$$j_*H^1(\Sigma,\mathbb{Z})=[\Sigma]\cup H^1(J,\mathbb{Z})\subset H^7(J,\mathbb{Z})$$
is primitive. As $\lambda \pi=j_*\circ[\Gamma']_*: H^7(J,\mathbb{Z})\rightarrow H^7(J,\mathbb{Z})$ is equal to $\lambda Id$ on $[\Sigma]\cup H^1(J,\mathbb{Z})$, it follows that $\lambda$ is an integer.
To see that $\lambda $ must be even, we observe that $H^7(J,\mathbb{Z})$ carries a unimodular intersection pairing $\omega_7$ thanks to the principal polarization of $J$, which provides isomorphisms $H^i(J,\mathbb{Z})\cong H_{i}(J,\mathbb{Z})$ for all $i$. Passing to $\mathbb{Z}$-coefficients, the decomposition (\ref{eqdech3A}) provides
the inclusion of a finite index sublattice
 \begin{eqnarray}\label{eqdech3AZ} = \frac{\theta^3}{3!}\cup H^1(J,\mathbb{Z})\oplus (\frac{\theta^3}{3!}\cup H^1(J,\mathbb{Z}))^{\perp_{\omega_7}}\subset H^7(J,\mathbb{Z}),\end{eqnarray}
 where the orthogonal decomposition induces (\ref{eqdech3A}) after  passing to rational coefficients.
 If there exists an integral endomorphism of the lattice $H^7(J,\mathbb{Z})$ which acts as an odd multiple $\lambda \pi$ of the orthogonal projector from $H^7(J,\mathbb{Z})$ onto the first summand in (\ref{eqdech3AZ}), then the discriminant of
the restriction of the pairing $\omega_7$ to $\frac{\theta^3}{3!}\cup H^1(J,\mathbb{Z})$ is odd. But, by Lemma \ref{lepolcomp} proved below, and using as above the isomorphism
$$H^7(J,\mathbb{Z})\cong H_7(J,\mathbb{Z})\cong H^3(J,\mathbb{Z})$$
given by the principal polarization and Poincar\'{e} duality,  this restriction  is equal to  four times the unimodular pairing on $H^1(J,\mathbb{Z})$, which is a contradiction.
\end{proof}
\begin{lemm}\label{lepolcomp} Let $A$ be a principally polarized abelian variety of dimension $5$ and $\theta$ the class of its Theta divisor. Then the unimodular pairing on $H^3(A,\mathbb{Z})$ restricts to $4$ times the theta pairing on $H^1(A,\mathbb{Z})\cong \theta\cup H^1(A,\mathbb{Z})\subset H^3(A,\mathbb{Z})$.
\end{lemm}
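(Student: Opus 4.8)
The plan is to reduce the statement to a one–line determinantal computation in a symplectic basis. First I would fix a symplectic basis $e_1,\dots,e_5,f_1,\dots,f_5$ of $H^1(A,\mathbb{Z})$ for which the polarization class is $\theta=\sum_{i=1}^5 e_i\wedge f_i\in\wedge^2H^1(A,\mathbb{Z})=H^2(A,\mathbb{Z})$; the theta pairing $\langle\,,\,\rangle_\theta$ on $H^1(A,\mathbb{Z})$ is then the symplectic form with $\langle e_i,f_j\rangle_\theta=\delta_{ij}$ and $\langle e_i,e_j\rangle_\theta=\langle f_i,f_j\rangle_\theta=0$, which is unimodular because the polarization is principal. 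The principal polarization identifies $H^k(A,\mathbb{Z})$ with $H_k(A,\mathbb{Z})$ through the isomorphism $H^1\cong H_1$ induced by $\langle\,,\,\rangle_\theta$, and the resulting unimodular self-pairing on $H^3(A,\mathbb{Z})=\wedge^3H^1(A,\mathbb{Z})$ is given on decomposable classes by the determinantal formula
\[
\langle u_1\wedge u_2\wedge u_3,\, v_1\wedge v_2\wedge v_3\rangle=\det\bigl(\langle u_a,v_b\rangle_\theta\bigr)_{1\le a,b\le 3}.
\]
This is the pairing denoted $\omega_7$ in Lemma~\ref{leh32}, transported by hard Lefschetz from $H^3$ to $H^7$; it is unimodular because $\langle\,,\,\rangle_\theta$ is. Since $\theta\cup:H^1(A,\mathbb{Q})\to H^3(A,\mathbb{Q})$ is injective by hard Lefschetz and $H^3(A,\mathbb{Z})$ is torsion free, the classes $\theta\wedge e_k,\ \theta\wedge f_k$ form a basis of the sublattice $\theta\cup H^1(A,\mathbb{Z})$, so it suffices to compute the Gram matrix of the above pairing on these classes.

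Next I would carry out the computation. Dropping the vanishing diagonal terms gives
\[
\theta\wedge e_k=\sum_{i\ne k}e_k\wedge e_i\wedge f_i,\qquad \theta\wedge f_k=\sum_{i\ne k}f_k\wedge e_i\wedge f_i .
\]
Feeding these into the determinantal pairing, each summand is a $3\times 3$ determinant in the entries $\langle\cdot,\cdot\rangle_\theta$. A direct check shows $\langle e_k\wedge e_i\wedge f_i,\, f_k\wedge e_j\wedge f_j\rangle=\delta_{ij}$ for $i,j\ne k$, while in $\langle\theta\wedge e_k,\theta\wedge e_l\rangle$ and $\langle\theta\wedge f_k,\theta\wedge f_l\rangle$ the relevant matrices have two rows supported on a single column, so all their determinants vanish. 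Summing over $i,j\ne k$ yields
\[
\langle\theta\wedge e_k,\,\theta\wedge f_k\rangle=g-1=4,\qquad \langle\theta\wedge e_k,\,\theta\wedge e_l\rangle=\langle\theta\wedge f_k,\,\theta\wedge f_l\rangle=0,
\]
so by bilinearity $\langle\theta\wedge\alpha,\theta\wedge\beta\rangle=4\langle\alpha,\beta\rangle_\theta$ for all $\alpha,\beta\in H^1(A,\mathbb{Z})$. Thus the Gram matrix of the unimodular pairing on the basis $\theta\wedge e_i,\ \theta\wedge f_i$ is $4$ times the symplectic Gram matrix of $\langle\,,\,\rangle_\theta$, which is exactly the assertion.

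The one point that genuinely needs care — and the main potential pitfall — is the identification of \emph{the} unimodular pairing on $H^3(A,\mathbb{Z})$. There are two natural candidates on $H^3$: the cup-product pairing $(u,v)\mapsto\int_A u\cup v\cup\tfrac{\theta^2}{2!}$, and the pairing coming from the polarization self-duality $H^3\cong(H^3)^\vee$ (equivalently the determinantal formula above). These do not coincide: the cup-product pairing is \emph{not} unimodular and a short computation gives for it the factor $12$ instead of $4$, whereas the polarization pairing is unimodular and gives $4$. Since the statement, and its use in Lemma~\ref{leh32} through the isomorphisms $H^i\cong H_i$, refers to the unimodular one, it is the determinantal pairing that must be used, and this is what the calculation evaluates; a possible global sign is immaterial, since only the (even) discriminant of the restriction enters the application. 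A coordinate-free explanation of the value is that, with respect to this pairing, $\Lambda L=(g-1)\,\mathrm{Id}=4\,\mathrm{Id}$ on $H^1$, where $L=\theta\cup$ and $\Lambda$ is its graded adjoint; this also shows that for a general principally polarized $A$ of dimension $g$ the factor is $g-1$.
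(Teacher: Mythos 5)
Your proof is correct, and it takes a genuinely different route from the paper's. You both start from the same identification of the unimodular pairing on $H^3(A,\mathbb{Z})$ as the one induced by the composite isomorphism $H^3\cong\bigwedge^3H^1\cong\bigwedge^3H_1\cong H_3$ (your determinantal formula $\det(\langle u_a,v_b\rangle_\theta)$ is exactly this pairing on decomposables, and your care in distinguishing it from the non-unimodular cup-product pairing $\int_A u\cup v\cup\theta^2/2$, which would give $12$, is well placed --- the paper's definition is the former). From there the paper argues geometrically: it notes the statement is topological, reduces to $A=JC$ a Jacobian with $[C]=\gamma_{\rm min}$, writes $\gamma_{\rm min}*\beta=\mu_*[C\times B]$ for the sum map $\mu:C\times C\to A$, and evaluates $\int_{C\times B}\mu_B^*(\theta\cup\alpha)$ using $\deg\theta_C=5$ and the Poincar\'e divisor, getting $5\langle\alpha,\beta\rangle-\langle\alpha,\beta\rangle=4\langle\alpha,\beta\rangle$. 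You instead do a purely multilinear-algebra computation of the Gram matrix of $\theta\wedge e_k,\theta\wedge f_k$ in a symplectic basis. Your approach is more elementary and self-contained, makes the value $g-1$ transparent for arbitrary $g$ (the paper's $5-1$ decomposes instead as ``degree of $\theta_C$ minus the Poincar\'e contribution''), and avoids the reduction to Jacobians; the paper's approach is more geometric and reuses machinery (the sum map, the Poincar\'e divisor) already present elsewhere in the argument. One small omission on your side: to conclude $\langle\theta\wedge\alpha,\theta\wedge\beta\rangle=4\langle\alpha,\beta\rangle_\theta$ by bilinearity you also need $\langle\theta\wedge e_k,\theta\wedge f_l\rangle=0$ for $k\neq l$, which is not literally covered by the two cases you list, but it is the same kind of vanishing-determinant check and poses no difficulty.
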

\begin{proof}  The  unimodular pairing on $H^3(A,\mathbb{Z})$ is given by the composite isomorphism
\begin{eqnarray} \label{eqisopolA3} \iota :  H^3(A,\mathbb{Z})\cong\bigwedge^3H^1(A,\mathbb{Z})\cong\bigwedge^3H_1(A,\mathbb{Z})\cong H_3(A,\mathbb{Z}),
\end{eqnarray}
where the middle isomorphism is induced by $\theta$, the left isomorphism is given by cup-product and the last isomorphism is given by Pontryagin product.
This isomorphism maps
$\theta \cup H^1(A,\mathbb{Z})$ to $\gamma_{\rm min}* H_1(A,\mathbb{Z})$. We thus have to compute, for $\alpha\in H^1(A,\mathbb{Z}),\,\beta\in H_1(A,\mathbb{Z})$ the pairing $\langle \theta\cup \alpha,\gamma_{\rm min}*\beta\rangle$. The statement is that it is equal to $4$ times the pairing $\langle  \alpha,\beta\rangle$.
The statement is topological so we can assume that $A=JC$ is the Jacobian of a curve $C\subset A$ whose class $[C]\in H_2(A,\mathbb{Z})$ is the minimal class $\gamma_{\rm min}$.
Let $\mu:C\times C\rightarrow A$ be the sum map. Let $\beta$ be represented by the class of an oriented circle $B\subset C$, and let
$$\mu_B:C\times B\rightarrow A$$
be the restriction of $\mu$ to $C\times B$. We have by definition
$$\gamma_{\rm min}*\beta=\mu_*([C\times B]_{\rm fund})\,\,{\rm in}\,\,H_3(A,\mathbb{Z}),$$
so that
\begin{eqnarray}\label{eqnombredint} \langle \theta\cup \alpha,\gamma_{\rm min}*\beta\rangle=\int_{C\times B} \mu_B^*(\theta\cup \alpha).
\end{eqnarray}
We observe that $\mu_B^*\theta={\rm pr}_1^*\theta_C+\mathcal{P}$, where $\mathcal{P}\in H^1(C,\mathbb{Z})\otimes H^1(B,\mathbb{Z})$ is now the pull-back to $C\times B$ of the Poincar\'{e} divisor of $J\times J$ and ${\rm pr}_1$ is the first projection $C\times B\rightarrow C$. Furthermore
$$\mu_B^*\alpha={\rm pr}_1^*\alpha_{\mid C}+{\rm pr}_2^*\alpha_{\mid B}.$$
Furthermore, the degree of $\theta_C$ is equal to $5$. It follows that
\begin{eqnarray}\label{eqnombredint2}\int_{C\times B} \mu_B^*(\theta\cup \alpha)=5\int_{{\rm pt}\times B}\alpha_{\mid B}+\int_{C\times B}[\mathcal{P}]\cup {\rm pr}_1^*\alpha_{\mid C}.
\end{eqnarray}
The first term is equal to $5\langle\alpha,\beta\rangle$.
By definition of the Poincar\'{e} divisor, the second term equals $\int_C\alpha_C\cup [\mathcal{P}]^*\beta=-\langle\alpha,\beta\rangle$, hence (\ref{eqnombredint}) and (\ref{eqnombredint2}) give
$$\langle \theta\cup \alpha,\gamma_{\rm min}*\beta\rangle=4\langle\alpha,\beta\rangle.$$
\end{proof}

\begin{proof}[Proof of Theorem \ref{theocubicsplit}]
We want to prove that the existence of a universal codimension $2$-cycle  $\Gamma$ implies that $J$ is split, that is, the K\"{u}nneth projector $\delta_1$ onto $H_1(J,\mathbb{Z})$ is algebraic. Note the following
\begin{lemm} Assuming the existence of a universal codimension $2$-cycle  $\Gamma$,  the algebraicity on $J\times J$ of the K\"{u}nneth projector $\delta_1$ onto $H_1(J,\mathbb{Z})$ is implied by the algebraicity on $\Sigma\times \Sigma$ of the K\"{u}nneth projector $\delta_{1,\Sigma}$ onto $H_1(\Sigma,\mathbb{Z})$.
\end{lemm}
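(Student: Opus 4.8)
The plan is to produce $\delta_1$ directly as the class of an explicit composite of algebraic correspondences, carrying out the strategy announced just before the lemma and aiming at condition (\ref{eqclassprojkun}): I would use $\delta_{1,\Sigma}$ to truncate $\Gamma'$ to its degree-one part and then transport the result back to $J$ along $j$. Concretely, set
$$W:=j\circ\delta_{1,\Sigma}\circ\Gamma'\in{\rm CH}^5(J\times J),$$
the composition being read from right to left: $\Gamma'\in{\rm CH}^2(J\times\Sigma)$ sends $J$ to $\Sigma$, the self-correspondence $\delta_{1,\Sigma}\in{\rm CH}^2(\Sigma\times\Sigma)$ acts on $\Sigma$, and the graph of the embedding $j$, a cycle in ${\rm CH}^5(\Sigma\times J)$, sends $\Sigma$ back to $J$. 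A codimension count for composition of correspondences ($2+2-2=2$ for $\delta_{1,\Sigma}\circ\Gamma'$, then $2+5-2=5$ after composing with the graph of $j$) confirms that $W$ lands in ${\rm CH}^5(J\times J)$, the expected codimension for a Künneth projector on a five-dimensional variety. Since $\Gamma'$ is algebraic (it is furnished by Shen's theorem once the universal cycle $\Gamma$ exists), $\delta_{1,\Sigma}$ is algebraic by the hypothesis of the lemma, and the graph of the morphism $j$ is algebraic, $W$ is represented by an algebraic cycle.

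Next I would check that $[W]$ acts on $H_*(J,\mathbb{Z})$ exactly as $\delta_1$. Each of the three correspondences is degree-preserving on homology: $[\Gamma']_*$ maps $H_m(J)$ to $H_m(\Sigma)$ (because $\Gamma'\in{\rm CH}^2(J\times\Sigma)$ with $\dim\Sigma=2$), $\delta_{1,\Sigma}$ is degree-preserving by construction, and $j_*$ is a morphism, so $[W]_*=j_*\circ(\delta_{1,\Sigma})_*\circ[\Gamma']_*$ preserves degree. For $m\neq 1$, the image of $[\Gamma']_*$ lies in $H_m(\Sigma)$, which $(\delta_{1,\Sigma})_*$ annihilates (and for $m\geq 5$ this is automatic, since $H_m(\Sigma)=0$), whence $[W]_*$ vanishes on $H_m(J,\mathbb{Z})$. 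For $m=1$, the map $[\Gamma']_*$ carries $H_1(J,\mathbb{Z})$ into $H_1(\Sigma,\mathbb{Z})$, on which $(\delta_{1,\Sigma})_*$ restricts to the identity; thus $[W]_*|_{H_1(J,\mathbb{Z})}=j_*\circ[\Gamma']_*|_{H_1(J,\mathbb{Z})}$, which is the identity by (\ref{eqPgamamgammaprimecoh}). Hence $[W]$ acts as the identity on $H_1(J,\mathbb{Z})$ and as zero on $H_m(J,\mathbb{Z})$ for $m\neq 1$; as $H^*(J,\mathbb{Z})$ is torsion-free, the degree-preserving homology action determines the class, so $[W]=\delta_1$. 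Being represented by $W$, the projector $\delta_1$ is algebraic, and $J$ is split in the sense of Definition \ref{fisplit}.

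This last step is a formal assembly, and I do not anticipate a substantive difficulty here; the real content sits in the preparatory results, namely the degree-one identity (\ref{eqPgamamgammaprimecoh}) coming from Shen's theorem together with the torsion-freeness of $H_*(\Sigma,\mathbb{Z})$ that makes $\delta_{1,\Sigma}$ an honest integral projector. The one point that genuinely requires care is that the equality $[W]=\delta_1$ hold \emph{integrally} and not merely rationally: it is essential that $[\Gamma']_*$ send the integral lattice $H_1(J,\mathbb{Z})$ into $H_1(\Sigma,\mathbb{Z})$, where the integral projector $\delta_{1,\Sigma}$ acts as the identity, and that (\ref{eqPgamamgammaprimecoh}) be an identity of integral endomorphisms. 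Granting this, no denominators appear and the composite cycle $W$ represents $\delta_1$ on the nose.
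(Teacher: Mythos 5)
Your construction $W=j\circ\delta_{1,\Sigma}\circ\Gamma'$ is exactly the cycle $\Delta_1:=j_*\circ\Delta_{1,\Sigma}\circ\Gamma'$ used in the paper, and your verification of its action on homology (zero on $H_m(J,\mathbb{Z})$ for $m\neq 1$ because $\delta_{1,\Sigma}$ kills $H_m(\Sigma,\mathbb{Z})$, identity on $H_1(J,\mathbb{Z})$ by (\ref{eqPgamamgammaprimecoh})) is the same argument. The proposal is correct and follows essentially the same route as the paper.
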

\begin{proof}
Indeed, given $\Gamma$, let $\Gamma'\in{\rm CH}^2(X\times \Sigma)$ be a Shen cycle,  satisfying the equivalent conditions  (\ref{eqPgamamgammaprime}), (\ref{eqPgamamgammaprimecoh}). Let
$\Delta_{1,\Sigma}$ be a codimension $2$ cycle on $\Sigma$ such that $[\Delta_{1,\Sigma}]=\delta_{1,\Sigma}$. Let
$$\Delta_1:=j_*\circ \Delta_{1,\Sigma}\circ \Gamma'\in{\rm CH}^5(J\times J).$$
Then $[\Delta_{1}]_*=0$ on $H_i(J,\mathbb{Z})$ for $i\not=1$, since
\begin{eqnarray} \label{eqcorrespocomposee} [\Delta_{1}]_*=j_*\circ[\Delta_{1,\Sigma}]_*\circ [\Gamma']_*:H_*(J,\mathbb{Z})\rightarrow H_*(J,\mathbb{Z})\end{eqnarray}
and   $[\Delta_{1,\Sigma}]_*=0$ on $H_i(\Sigma,\mathbb{Z})$ for $i\not=1$. Furthermore, by (\ref{eqcorrespocomposee}) and (\ref{eqPgamamgammaprimecoh}), $[\Delta_{1}]_*$ acts as the identity on $H_1(J,\mathbb{Z})$.
\end{proof}
Note also that, by Lemma \ref{lekunprojsigma}, in order to prove that the K\"unneth projector $\delta_{1,\Sigma}$ is algebraic, it suffices to prove that an odd multiple $\lambda \delta_{1,\Sigma}$ is algebraic, or equivalently an odd multiple $\lambda \delta_{3,\Sigma}$ is algebraic.

We now consider the K\"unneth components $[\Gamma'']_i$ of the class of the cycle
$$\Gamma'':=\Gamma'\circ j\in {\rm CH}^2(\Sigma\times \Sigma).$$
We know by (\ref{eqPgamamgammaprimecoh}) that $[\Gamma'']_1$ acts as the identity on $H_1(\Sigma,\mathbb{Z})$, and, by Lemma \ref{leh32}, that $[\Gamma'']_3$ acts by an even multiple $2\lambda_3$ of the identity on $H_3(\Sigma,\mathbb{Z})$. It follows from Lemma \ref{lekunprojsigma} that there exists an algebraic cycle $\Gamma''_3$ on $\Sigma\times \Sigma$ acting as $2\lambda_3Id$ on $H_3(\Sigma,\mathbb{Z})$ and by $0$ on the other homology groups of $\Sigma$. Hence $$\Gamma''':=\Gamma''-\Gamma''_3$$
 acts by $0$ on $H_3(\Sigma,\mathbb{Z})$ and by $Id$ on $H_1(\Sigma,\mathbb{Z})$. We can in an obvious way also modify $\Gamma'''$ so that it acts trivially on $H_0(\Sigma,\mathbb{Z})$ and $H_4(\Sigma,\mathbb{Z})$.

Finally, we have to consider what happens on $H_2(\Sigma,\mathbb{Z})$. In fact, $H_2(\Sigma,\mathbb{Z})$ contains the finite index sublattice
\begin{eqnarray}\label{eqsublatticesubm} \mathbb{Z}l\oplus   (\mathbb{Z}l)^{\perp},\end{eqnarray}
where the class $l$ is defined in (\ref{eqajoutedefl}) and satisfies $l^2=5$, implying that the index of the sublattice (\ref{eqsublatticesubm}) is odd. Furthermore, the decomposition in (\ref{eqsublatticesubm}) is a direct sum of Hodge structures, the first one being trivial, and the second one being simple and nontrivial, because $X$ is Mumford-Tate general. It follows that the K\"unneth component 
$[\Gamma'']_2$ acts on $H_2(\Sigma,\mathbb{Z})$  preserving the sublattice (\ref{eqsublatticesubm}) and its  decomposition, hence by multiplication by respective  integers $\lambda_1$, $\lambda_2$ on the  summands.
The fact that $l^2=5$ shows that the cycle   $l\times l$ on $\Sigma$ acts by multiplication by $5$ on the first summand, so that the cycle
$$5\Gamma'''-\lambda_1 l\times l$$
has the property that its cohomology class $[5\Gamma'''-\lambda_1 l\times l]$
acts by $0$ on $l$, and by $5\lambda_2$ on $(\mathbb{Z}l)^{\perp}$.

We finally discuss the parity of $\lambda_2$.

\vspace{0.5cm}

{\bf Case (i).} {\it  $5\lambda_2=2m$ is even}. We know  by Corollary \ref{corotardifquoiqueutile} that $2\delta_2$ is algebraic, hence there exists a codimension $2$ cycle $\Delta_2$ on $\Sigma\times \Sigma$ such that $[\Delta_2]_*$ acts as $2m Id$ on
$(\mathbb{Z}l)^{\perp}$, and $0$ on $\mathbb{Z}l$ and the other homology groups of $\Sigma$. But then the cycle
$$5\Gamma'''-\Delta_2-\lambda_1 l\times l$$ acts by $0$ on $H_i(\Sigma,\mathbb{Z})$ for $i\not=0$ and by an odd multiple of the identity on $H_1(\Sigma,\mathbb{Z})$, which concludes the proof in this case.

\vspace{0.5cm}

{\bf Case (ii).} {\it  $5\lambda_2=2m+1$ is odd.} In this case, $(2m+1)\Delta_\Sigma-5\Gamma'''$ acts by $0$ on $(\mathbb{Z}l)^{\perp}$, by $(2m+1-5)Id$ on $H_1(\Sigma,\mathbb{Z})$, and by $(2m+1)Id$ on $H_3(\Sigma,\mathbb{Z})$. As $2m+1-5=2k$ is even, there exists by Lemma \ref{lekunprojsigma} a codimension $2$ cycle $\Delta_1$ on $\Sigma\times\Sigma$
such that $[\Delta_1]=2k\delta_1$ and then the cycle $$(2m+1)\Delta_\Sigma-5\Gamma'''-\Delta_1$$ acts by $0$ on $(\mathbb{Z}l)^{\perp}$ and  on $H_1(\Sigma,\mathbb{Z})$, and by $(2m+1)Id$ on $H_3(\Sigma,\mathbb{Z})$. Its class is thus an odd multiple of $\delta_3$. An odd multiple of $\delta_3$ is thus algebraic, hence an odd multiple of $\delta_1$ is algebraic.
\end{proof}

    \end{document}